\numberwithin{equation}{section}
\numberwithin{figure}{section}
\theoremstyle{plain}
\numberwithin{equation}{section}
\newtheorem{theorem}{Theorem}[section]
\newtheorem{proposition}[theorem]{Proposition}
\newtheorem{lemma}[theorem]{Lemma}
\newtheorem{remark}[theorem]{Remark}
\theoremstyle{remark}
\global\long\def\R{\mathbf{\mathbb{R}}}%
\global\long\def\norm#1{\|#1\|}%
\global\long\def\wt#1{\widehat{#1}}%
\global\long\def\tilde#1{\widetilde{#1}}%
\global\long\def\bbR{\mathbf{\mathbb{R}}}%
\global\long\def\bbN{\mathbf{\mathbb{N}}}%
\title[Interior Stefan problem]{Melting and freezing rates of the radial interior Stefan problem in two dimension}
\author{Jeongheon Park}
\email{jse05002@kaist.ac.kr}
\address{Department of Mathematical Sciences, Korea Advanced Institute of Science
and Technology, 291 Daehak-ro, Yuseong-gu, Daejeon 34141, Korea}
\keywords{ blow-up construction,
asymptotic profile}
\subjclass[2020]{35B44, 35Q55, 37K40}
\begin{document}

\begin{abstract}
We consider the interior Stefan problem under radial symmetry in two dimension. A water ball surrounded by ice undergoes melting or freezing. We construct a discrete family of global-in-time solutions, both melting and freezing scenarios. The evolution of the free boundary, represented by the radius of the water ball,  $\lambda(t)$ exhibits exponential convergence to a limiting radius value $\lambda_\infty > 0$, characterized by the asymptotic expression
\[
\lambda(t) = \lambda_\infty + (1 - \lambda_\infty)\, e^{-\frac{\lambda_k}{\lambda_\infty^2} t + o_{t \to \infty}(1)},
\]
where $\lambda_k$ stands for the $k$-th Dirichlet eigenvalue of the Laplacian on the unit disk for any $k\in \mathbb{N}$. Our approach draws inspiration from the research conducted by Hadžić and Raphaël~\cite{Had-Raph} concerning the exterior radial Stefan problem, which involves an ice ball is surrounded by water. In contrast, the bounded geometry in our setting leads to scenario results in a non-degenerate spectrum, leading to distinctly different long-term behavior. These solutions for each $k$ remain stable under perturbations of co-dimension~$k - 1$.
\end{abstract}

\maketitle

\section{Introduction}
We consider the interior Stefan problem in two dimension 
\begin{equation*}
u:\Omega(t)\subset\R^2\rightarrow\mathbb{R},
\end{equation*}
\begin{equation}\label{stefan}
\begin{cases}
\partial_{t}u-\Delta u=0 \text{ in } \Omega(t),\\
\partial_{n}u=V_{\partial\Omega(t)} \text{ on } \partial\Omega(t),\\
u=0\text{ on } \partial\Omega(t).
\end{cases}
\end{equation}
The Stefan problem addresses the phase transition that takes place at the boundary that separates two phases. The problem involves determining the unknowns: a time-variant domain $\Omega(t) \subset \bbR^2$ with a dynamic boundary and a temperature function $u(t,x)$ defined in $\Omega(t)$. We consider that at time $t$, the region $\Omega(t)$ denotes the water phase, whereas its complement $\Omega(t)^{c}$ consists of ice. We examine the situation in which the water region $\Omega(t)$ is enclosed, and this scenario is named an \textit{interior} Stefan problem. The free boundary condition, which is the normal velocity $V_{\partial\Omega(t)}$ of the moving boundary $\partial\Omega(t)$, is dictated by the normal derivative of $u$, represented as $\partial_n u$.

This paper aims to explore melting and freezing rates in the context of the one-phase classical interior Stefan problem. Our study is inspired by a similar study of the exterior Stefan problem by Had\v{z}\'ic and Rapha\"el \cite{Had-Raph}. In the exterior Stefan problem, an ice ball is surrounded by water, as so the domain $\Omega(t)$ is the exterior of the region outside a ball. The authors in \cite{Had-Raph}  construct a discrete sequence of melting and freezing rates for the radial exterior Stefan problem. This construction provides a rigorous justification for the formal asymptotic rates previously established by Herrero and Vel\'azquez~\cite{Herrero-Velaz}.

A key geometric difference between the interior and exterior problems lies in the evolution of the bounded region occupied by one of the phases (water or ice), rather than in the domain of the solution itself. In the interior problem, the region $\Omega(t)$ represents the water phase, which shrinks over time in the freezing regime. In contrast, in the exterior problem analyzed by Had\v{z}\'ic and Rapha\"el \cite{Had-Raph}, the complement $\Omega(t)^c$ corresponds to the ice region, which shrinks during the melting process. From the perspective of the shrinking behavior of the bounded region, we construct a sequence of solutions so that the interior problem exhibits global-in-time dynamics, in contrast to the exterior problem as in \cite{Had-Raph}.

\subsection{ Well-posedness results}
The Stefan problem is known to be well-posed only for data satisfying some geometric condition, such as the Taylor sign condition. In general, the dynamics may develop a singularity in a short time. In one dimension, the free boundary $\partial \Omega(t)$ may develop a jump discontinuity of the free boundary in finite time, provided that the initial data are large near the initial boundary $\partial \Omega(0)$; see \cite{Sherman, L.Chayes, M.Herrero, L.Chayes.I.Kim1, L.Chayes.I.Kim2}. Formally, this phenomenon is attributed to the presence of highly active water particles near the boundary, which induce arbitrarily high rates of freezing. These jump discontinuities in the free boundary are a primary source of non-existence and non-uniqueness of weak solutions (see also the discussion in \cite{Delaure}).

However, under suitable constraints on the initial data, a substantial body of works has addressed the local well-posedness and regularity of solutions to the Stefan problem. Kamenomostskaja \cite{KamenomostskayaS.L.}, Friedman \cite{Friedman1}, and Ladyzhenskaya, Solonnikov, and Ural’tseva \cite{Solonnikov} formulated the notion of weak solutions to \eqref{stefan} and constructed weak solutions. For results concerning the existence and regularity of free boundaries in this setting, we refer to \cite{Caffarelli1, Caffarelli2, Caff-Evans, Caffa-Friedman, Friedman3, Kinderlehrer, Kinderlehrer2}, and references therein.

An important framework for studying \eqref{stefan} is the theory of viscosity solutions. Kim \cite{Kim.I.C} established the existence of viscosity solutions for using the maximum principle. The regularity theory of viscosity solutions was studied in the works \cite{Caffalleli3, Athanasopoulos1, Athanasopoulos2, Friedman1} and \cite{Choikim2}.
Recently, alternative solution notions have been proposed to address different aspects of the Stefan problem. For the supercooled Stefan problem, Choi, Kim, and Kim~\cite{Kim-Kim2024,ChoiKim2024} established the global existence of maximal weak solutiodns in general dimensions, based on a stochastic optimal transport formulation with free-target constraints. See also ~\cite{ChuKim2025} for a \emph{non-local} variant model. They formulated a weak solution framework using martingale optimal transport and established global existence and stability results. These works demonstrate the ongoing efforts to develop generalized solution concepts for various physical scenarios beyond the classical formulation.
The well-posedness theory for \textit{classical solutions} is of interest. In the classical works of Meirmanov \cite{Meirmanov} and Hanzawa \cite{Hanzawa}, local-in-time existence of smooth solutions was established using analytic tools such as von Mises-type coordinate transforms and Nash–Moser iteration. These approaches involve a loss of derivatives and require high regularity of the initial data.
More recently, Had\v{z}i\'c and Shkoller \cite{Had-Shk} developed a robust local well-posedness theory in Sobolev spaces $\mathcal{C}([0,T]; H^2)$ without derivative loss. Their analysis is based on energy estimates and elliptic regularity and crucially exploits a geometric condition known as the \textit{Taylor sign condition}, which ensures that the normal derivative of the temperature at the free boundary satisfies $\partial_n u(x)\ge \lambda > 0$ for all $x\in\partial\Omega$.  As in the construction of Had\v{z}\'ic and Rapha\"el \cite{Had-Raph} of melting and freezing dynamics,  for the radial Stefan problem, the well-posedness is simple and one can understand solutions are \textit{classical solutions} as the Taylor sign condition propagates as long as the radius $\lambda(t)>0 $.

In this work, we are also in the radial symmetric case. In our setting, the domain $\Omega(t)$ for water to \eqref{stefan} is a ball of radius $\lambda(t)$ centered at the origin, that is,
\[
\Omega(t) = B_{\lambda(t)} = \{ x \in \mathbb{R}^2 : |x| < \lambda(t) \}.
\]
Under this radial symmetry, the Stefan problem \eqref{stefan} reduces to the following one-dimensional system:
\begin{equation}
\begin{cases}
\partial_{t}u-\partial_r^2u-\frac{1}{r}\partial_ru=0,\text{ in } \Omega(t)\\
\partial_ru(t,\lambda(t))=-\dot{\lambda}(t),\\
u(t,\lambda(t))=0,\\
u(0,\cdot)=u_{0},\quad \lambda(0)=\lambda_{0}.
\end{cases}\label{stefanradial}
\end{equation}
The Taylor sign condition holds automatically in the radial setting. Hence, the system \eqref{stefanradial} is locally well posed in $H^2$. That is, for $(u_0, \lambda_0) \in H^2 \times \mathbb{R}_+^*$ with radially symmetric initial data $u_0$, there exists a unique solution 
\[
(u(t), \lambda(t)) \in \mathcal{C}([0,T); H^2) \times \mathcal{C}^1([0,T); \mathbb{R}_+^*)
\]
for some $T > 0$. According to the local theory developed in \cite{Had-Shk}, the solution $u(t, r)$ can be continued as long as the Taylor sign condition is satisfied. In this setting, the free boundary condition $u_r(t, \lambda(t)) = -\dot{\lambda}(t) \ne 0$ forces the Taylor sign condition to propagate. In this work, we construct global-in-time solutions by tracking the modulation law for $\lambda(t)$, ensuring that $\dot{\lambda}(t)$ remains nonzero and sign-definite throughout the evolution. Using the scaling symmetry of the system, we normalize the initial configuration by choosing $\lambda_0 = 1$, so that $\Omega(0) = B_1(0)$ without loss of generality.

\subsection{Main result}
To state our main theorem, we first recall the eigenvalue problem for the radial Laplacian on the two-dimensional unit ball \( B_1(0) \) with Dirichlet boundary condition:
\begin{align}
\begin{cases}
-\Delta_{\text{rad}} u = -\partial_{rr} u - \frac{1}{r} \partial_r u = \lambda u, & \text{in } B_1(0), \\
u(1) = 0, & \text{on } \partial B_1(0).
\end{cases} \label{eq:1}
\end{align}
It is well known from standard ODE theory that problem \eqref{eq:1} admits a sequence of eigenvalue–eigenfunction pairs \( (\lambda_j, \eta_j) \) indexed by \( j \in \mathbb{N} \). Due to radial symmetry, each eigenvalue \( \lambda_j \) is simple. Moreover, the collection \( \{ \eta_j \}_{j=1}^{\infty} \) forms an orthonormal basis of the Hilbert space \( L^2_{\text{rad}}(B_1(0)) \), and the eigenvalues satisfy \( 0 < \lambda_1 < \lambda_2 < \cdots \). For further spectral properties, we refer to \eqref{renormalization of unperturbed eigenfunctions}–\eqref{increment} below.

We are now ready to state our main theorem.
\begin{theorem}
\label{main theorem} 
    For each $k \in \mathbb{N}$, let $\lambda_k$ be the $k$-th eigenvalue of the operator in \eqref{eq:1}. Then there exist global-in-time solutions to the interior Stefan problem \eqref{stefanradial}, with melting or freezing rates associated with $\lambda_k$. More precisely, there exist a small constant $\delta_k > 0$ and smooth initial data $u_0$, with $\|u_0\|_{L^1(B_1(0))} \leq \delta_k$, such that the corresponding solution $(u, \lambda) \in \mathcal{C}([0, \infty), H^2) \times \mathcal{C}^1([0, \infty), \mathbb{R}_+^*)$ to \eqref{stefanradial} exists globally in time.

    The terminal radius $\lambda_\infty$ is given by
    \begin{equation} \label{terminal value}
    \lim_{t \to \infty} \lambda(t) = \lambda_\infty > 0, \qquad \lambda_\infty = \sqrt{1 + \frac{1}{\pi} \int_{\Omega_0} u_0 \, dx},
    \end{equation}
    and the melting or freezing rate satisfies
    \begin{equation} \label{asymptoticoflambda}
    \lambda(t) = \lambda_\infty + (1 - \lambda_\infty) e^{-\frac{\lambda_k}{\lambda_\infty^2} t + o_{t \to \infty}(1)}.
    \end{equation}

    Moreover, for each $k \in \mathbb{N}$, the solution $(u, \lambda)$ is stable under perturbations of codimension $k - 1$. In addition, if $\int_{\Omega_0} u_0 \, dx < 0$, the solution corresponds to the \emph{freezing} regime, while if $\int_{\Omega_0} u_0 \, dx > 0$, it corresponds to the \emph{melting} regime.

\end{theorem}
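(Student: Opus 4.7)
The plan is to adapt the strategy of Had\v{z}i\'c--Rapha\"el \cite{Had-Raph} to the interior (bounded) geometry, exploiting the fact that the spectrum \(\{\lambda_j\}\) is non-degenerate and all eigenvalues are strictly positive. First, I would pass to self-similar coordinates \(y = r/\lambda(t)\), writing \(u(t,r) = w(t, r/\lambda(t))\) so that the moving boundary \(r=\lambda(t)\) becomes the fixed boundary \(y=1\). The PDE for \(w\) on \(B_1(0)\) becomes
\begin{equation*}
\partial_{t} w - \frac{1}{\lambda^{2}}\Delta_{\mathrm{rad}} w - \frac{\dot\lambda}{\lambda}\, y\,\partial_{y} w = 0,\qquad w(t,1)=0,
\end{equation*}
and the free boundary condition \(\partial_{r} u(t,\lambda(t)) = -\dot\lambda(t)\) translates into \(\partial_{y} w(t,1) = -\lambda\dot\lambda\). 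This converts the free boundary problem into a fixed-domain evolution coupled to an ODE for the modulation parameter \(\lambda(t)\).

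Second, I would expand \(w\) in the orthonormal eigenbasis \(\{\eta_j\}\) of \(-\Delta_{\mathrm{rad}}\) on \(B_1(0)\) with Dirichlet data, writing \(w(t,y) = \sum_{j\ge 1} a_j(t)\,\eta_j(y)\). Projecting the PDE onto each \(\eta_j\) yields a system of ODEs
\begin{equation*}
\dot a_j + \frac{\lambda_j}{\lambda^{2}}\, a_j = \frac{\dot\lambda}{\lambda}\, \sum_{i} \langle y\,\partial_{y}\eta_i,\eta_j\rangle a_i,
\end{equation*}
together with the boundary relation \(\sum_j a_j(t)\,\partial_{y}\eta_j(1) = -\lambda\dot\lambda\), which serves as the modulation law for \(\lambda(t)\). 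An approximate solution is built by placing essentially all the mass on the \(k\)-th mode: \(w_{\mathrm{app}}(t,y) \approx a_k(t)\eta_k(y)\) with \(a_k(t)\) decaying like \(e^{-\lambda_k t/\lambda_\infty^{2}}\). Combining the leading-order modulation relation with the ODE for \(a_k\) produces the target asymptotic \(\lambda(t) - \lambda_\infty \sim (1-\lambda_\infty)\,e^{-\lambda_k t/\lambda_\infty^{2}}\); the identity \eqref{terminal value} for \(\lambda_\infty\) comes from the conservation law obtained by integrating \eqref{stefan} over \(\Omega(t)\).

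Third, writing \(w = w_{\mathrm{app}} + \varepsilon\), I would control the remainder \(\varepsilon\) by weighted energy estimates adapted to \(-\Delta_{\mathrm{rad}}\), in a norm that sees only the high modes \(j\ge k+1\) (for which \(\lambda_j > \lambda_k\) produces exponential decay faster than \(e^{-\lambda_k t/\lambda_\infty^{2}}\)). The modes \(j \le k-1\) are unstable relative to the chosen rate, so I would absorb the \(k-1\) dangerous coefficients \(a_1(0),\ldots,a_{k-1}(0)\) into free parameters of the initial data and run a \textbf{Brouwer / topological shooting argument}: on a small ball in \(\mathbb{R}^{k-1}\) of initial values for the unstable directions, the bootstrap assumption either closes for all time or fails first along one of these directions, and a continuity/degree argument produces at least one choice for which the full bootstrap holds globally. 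The modulation law and smallness of \(\varepsilon\) together guarantee \(\dot\lambda(t)\ne 0\) and sign-definite, propagating the Taylor sign condition and hence global existence by the local theory of \cite{Had-Shk}. The sign of \(\int_{\Omega_0} u_0\,dx\) fixes the sign of \(1-\lambda_\infty\) via \eqref{terminal value} and thereby distinguishes the melting from the freezing regime.

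The principal obstacle I anticipate is the interaction between the modulation equation and the energy estimate: the boundary term \(\partial_y w(t,1)\) defining \(\dot\lambda\) is not directly controlled by interior \(L^2\)-type norms, so one must introduce a norm that captures the trace at \(y=1\) (or equivalently use elliptic regularity up to the boundary) while still being compatible with the modes-\(j\ge k+1\) spectral gap \(\lambda_{k+1}-\lambda_k > 0\). A related subtlety is that the nonlinear coupling term \(\frac{\dot\lambda}{\lambda} y\partial_y w\) is critical in terms of regularity, so carefully using the smallness of \(\dot\lambda\) (which is \(O(e^{-\lambda_k t/\lambda_\infty^{2}})\) from the modulation law itself) is essential to close the bootstrap. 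Once these estimates are set up with the right orthogonality conditions on \(\varepsilon\) to kill secular modes, the codimension-\((k-1)\) stability statement follows from the standard shooting framework.
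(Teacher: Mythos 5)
Your proposal follows the same overall architecture as the paper — renormalization to the fixed domain $B_1(0)$, spectral decomposition, modulation equations for the mode amplitudes, a spectral-gap energy estimate for the remainder, a Brouwer shooting argument for the $k-1$ unstable directions, and the conservation law for $\lambda_\infty$ — and I see no step that would fail. The one substantive difference is the choice of decomposition: you expand in the \emph{unperturbed} Dirichlet eigenbasis $\{\eta_j\}$ with the flat $L^2$ inner product and carry the transport term $\frac{\dot\lambda}{\lambda}\Lambda w$ as an explicit mode coupling, whereas the paper first performs a Lyapunov--Schmidt construction of perturbed eigenpairs $(\psi_{b,j},\lambda_{b,j})$ of $\mathcal{H}_b=-\Delta+b\Lambda$ and imposes orthogonality in the weighted inner product $\langle\cdot,\cdot\rangle_b$ for which $\mathcal{H}_b$ is self-adjoint. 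The paper's choice buys an exactly diagonalized linear part, clean modulation errors of size $O(b\,b_k)$, and an energy identity with no uncontrolled cross terms; your version is more elementary but leaves the off-diagonal couplings $\frac{\dot\lambda}{\lambda}\langle\Lambda\varepsilon,\eta_j\rangle$ and the loss of self-adjointness in the flat energy identity to be absorbed by hand — feasible, since $\dot\lambda/\lambda$ is itself of the order of the leading amplitude, but messier. Two technical points you leave open that the paper needs: the boundary trace $\partial_y\varepsilon(t,1)$ (which you correctly flag as the main obstacle) is handled by running the bootstrap at the $H^2$ level via the second-order energy $\|\mathcal{H}_b\varepsilon\|_{L^2_b}^2$ together with an integration-by-parts trace identity; and for $k>1$ the naive energy bootstrap can fail when $3\lambda_k>2\lambda_{k+1}$, which the paper circumvents by introducing the auxiliary adiabatic parameter $b(s)=A_k e^{-\lambda_k s}/(s+1)$ in \eqref{hyperparameter}, slightly below the true amplitude $b_k(s)$, to restore integrability of the error terms.
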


\begin{remark}[Comparison with radial exterior Stefan problem]
In \cite{Had-Raph}, the authors considered the exterior Stefan problem
where $\Omega(t)=\{|x|\ge\lambda(t)\}$ and constructed a sequence
of melting and freezing rates. Our analysis is closely inspired by
their work, but with a crucial difference in spectral structure that leads to fundamentally different dynamics. In the unbounded domain $\{|x|\ge\lambda(t)\}$, the renormalized linearized
operator deviates from the fundamental solution of the Laplacian in two
dimensions, which is given by $\log r$. In contrast, in the bounded domain $\{|x|\leq\lambda(t)\}$ used in our analysis, the Laplacian
admits a discrete spectrum, and the renormalized operator deviates
from these discrete modes. In the melting regime studied in \cite{Had-Raph}, the authors considered the initial data with a small $H^1$ norm and showed that $\lambda(t) \to 0$ as $t \to T$ for some finite $T > 0$. As the shrinking bounded region $\Omega(t)^c$ collapses to a point, the domain asymptotically fills the entire space $\mathbb{R}^2$. Consequently, all eigenvalues collapse to zero and the corresponding eigenfunctions tend to the fundamental solution $\log r$ of the Laplacian on $\mathbb{R}^2$, in an appropriate sense. This degeneracy of the eigenvalue induces a logarithmic correction in the evolution law of $\lambda(t)$; see \cite[Section 1.5]{Had-Raph} for details. However, in our case, the precompactness of the domain of equation \eqref{eq:1} rules out such spectral degeneracy. This key difference gives rise to a leading-order linear term in the modulation equations, which plays a central role in generating the global-in-time dynamics; see Proposition~\ref{Modulation equation proposition}.
\end{remark}

\begin{remark}
[Tracing the rate of $\lambda(t)$] Let $\tilde{u}$ be a
radial solution to the fixed boundary problem of the heat equation on the ball $B_{R}(0)$: 
\[
\begin{cases}
\partial_{t}\tilde{u}-\Delta\tilde{u}=0\quad\text{in}\quad[0,\infty)\times B_{R}(0)\\
\tilde{u}(t,R)=0\quad\text{for}\quad t\in[0,\infty)\\
\tilde{u}(0,r)=\tilde{u}_{0}\quad\text{in}\quad B_{R}(0)
\end{cases}
\]
If the initial data $\tilde{u}_{0}$ is chosen to be the $k$-th eigenfunction
of the spectral problem the equation \eqref{eq:1} with eigenvalue $\lambda_{k}>0$,
then standard parabolic theory yields the decay estimate \[\tilde{u}(t)\sim e^{-t\frac{\lambda_{k}}{R^{2}}}\tilde{u}(0).\]
By an analogy, we expect that a solution $u$ to \eqref{stefanradial} with initial data $\tilde{u}_{0}$ satisfies 
\[
u(t) \sim e^{-t \frac{\lambda_k}{\lambda(t)^2}},
\quad \text{so that} \quad
\partial_r u(t) = -\dot{\lambda}(t) \sim e^{-t \frac{\lambda_k}{\lambda(t)^2}}.
\]
In general, tracking the evolution of the free boundary in such problems is a highly nontrivial task. To this end, we employ modulation analysis, a technique developed for the construction of type~\text{II} blow-up solutions in both parabolic and dispersive settings. This method has proven powerful in capturing refined asymptotics near singularities and has been used extensively in a variety of contexts, including the critical and supercritical wave map equations (see, e.g., \cite{Raphaelwave, GhoulWaveMap, CollotWave, KriegerBeyond, GaoKrieger}) and the parabolic Keller–Segel system and its variants \cite{CollotKS, HouNguyenKS, CollotAniso, WaldronHMF, BiernatHMF}. Inspired by these developments, we adapt the modulation framework to the interior Stefan problem, allowing us to construct global-in-time solutions with precise asymptotics for the interface evolution.
\end{remark}

\begin{remark}
[Higher-dimensional case] Theorem~\ref{main theorem}
states the result on dimension two. However, the argument extends to higher dimensions in the same manner. In particular, the radial eigenfunctions
$\{u_{j}\}_{j\in\mathbb{N}}$ in $L_{rad}^{2}(\mathbb{R}^{n})$ of
the Laplacian with Dirichlet boundary conditions on the unit ball $B_1(0) \subset \bbR^n$ are given by 
\[
u_{j}(r)=r^{1-\frac{n}{2}}J_{-\frac{2-n}{2}}(rr_{n,j}),
\]
where $J_{-\frac{2-n}{2}}$ is the Bessel function of the first kind of order
$-\frac{2-n}{2}$, and $\{r_{n,j}\}_{j\in\mathbb{N}}$ denotes the increasing sequence
of positive zeros of $J_{-\frac{2-n}{2}}$.
Consequently, the universal constant $\lambda_k$ appearing in the asymptotic law for $\lambda(t)$ in \eqref{asymptoticoflambda} is replaced by $r_{n,k}^{2}$ for each $k \in \bbN$. A detailed derivation in higher dimensions follows from the same spectral and modulation framework and is omitted here for brevity.
\end{remark}

\subsection{Notations}
For quantities $A\in\mathbb{R}$ and $B\geq0$, we denote $A \lesssim B$ or $A=O(B)$ if $|A|\leq CB$ holds for some implicit constant $C$. For $A,B\geq0$, we say $A \sim B$ when $A \lesssim B$ and $B \lesssim A$. Similarly, for $A\geq 0$ and $B\in \mathbb{R}$, we write $A\gtrsim B$ if $B\lesssim A$.

We use $(\cdot, \cdot)$ for the usual dot product in $\mathbb{R}^n$:
For $\vec{a}=(a_1, \cdots ,a_n)$ and $\vec{b}=(b_1, \cdots, b_n)$, we define 
\begin{equation*}
    (\vec{a},\vec{b}) \coloneqq \sum_{i=1}^n a_ib_i.
\end{equation*}

We approximate the exact law by $b\approx -\frac{\lambda_{s}}{\lambda}$. Hence, we introduce the linearized operator $\mathcal{H}_{b}$,
\begin{equation*}
    \mathcal{H}_b \coloneqq -\Delta + b\Lambda,
\end{equation*}
on $B_1(0)$ with the Dirichlet boundary condition. Here, we denote $\Lambda$ as a scaling operator
\begin{equation*}
    \Lambda \coloneqq r\partial_r.
\end{equation*}
Then, as in \cite{Had-Raph}, $\mathcal{H}_{b}$ is self-adjoint with respect to the inner product $\langle \cdot,\cdot \rangle_b$ on $B_{1}(0)$:
\begin{equation}
\rho_{b}(z)\coloneqq e^{-\frac{1}{2}by^{2}},\qquad\langle f,g\rangle_{b}\coloneqq \int_{0}^{1}f(y)g(y)\rho_{b}(y)y\ dy.\label{weightedL2product}
\end{equation}
Also, we denote the associated norms by 
\begin{align*}
 & \norm{f}_{L_{b}^{2}}\coloneqq \left(\int_{0}^{1}f(y)^{2}\rho_{b}(y)ydy\right)^{\frac{1}{2}},\quad\norm{f}_{\dot{H}_{b}^{1}}\coloneqq \norm{\nabla f}_{L_{b}^{2}},\\
 & \text{ and }\norm{f}_{H_{b}^{1}}\coloneqq \left(\norm{\nabla f}_{L_{b}^{2}}^{2}+\norm{f}_{L_{b}^{2}}^{2}\right)^{\frac{1}{2}}.
\end{align*}
We denote Hilbert spaces $L_{b}^{2}$ and $H_{b}^{1}$ by 
\begin{equation*}
L_{b}^{2}\coloneqq \{f:B_{1}(0)\rightarrow\mathbb{R}:f\ \text{is radial with}\ \norm{f}_{L_{b}^{2}}<\infty\},
\end{equation*}
and
\begin{equation*}
H_{b}^{1}\coloneqq \{f:B_{1}(0)\rightarrow\mathbb{R}\ |\ f\ \text{is radial with}\ \norm{f}_{H_{b}^{1}}<\infty\ \text{and}\ f(1)=0\}.
\end{equation*}
The pointwise boundary condition $f(1)=0$ in the definition of
$H_{b}^{1}$ is defined in the trace sense. For $b=0,$ $L_{0}^{2}=L_{rad}^{2}(B_{1}(0))$. Similarly, we define $H_b^2$.

\subsection{Strategy of the proof}
We begin by renormalizing the solution $u$ into a rescaled $v$, transforming the free boundary problem \eqref{stefanradial} into one with a fixed boundary:
\begin{equation}\label{renormalised solution}
v(s,y)\coloneqq u(t,r) |_{t=t(s)},\quad \frac{ds}{dt} \coloneqq \frac{1}{\lambda^{2}},\quad y\coloneqq \frac{r}{\lambda}.
\end{equation}
Under this change of variables, the equation becomes
\begin{equation}\label{renormalised stefan}
\begin{cases}
\partial_{s}v-\frac{\lambda_{s}}{\lambda}\Lambda v-\Delta v=0,\quad  y \leq 1,\\
v(s,1)=0,\\
v_{y}(s,1)=-\frac{\lambda_{s}}{\lambda}.
\end{cases}
\end{equation}
We introduce the dynamical parameter $a$ to be
\begin{equation*}
a\coloneqq -\frac{\lambda_{s}}{\lambda},
\end{equation*}
and our aim is to track the evolution of this parameter.

\textbf{1. Linear analysis:} 
We first study the spectral properties of the linearized operator
\begin{equation}
\mathcal{H}_{b}=-\Delta+b\Lambda,\quad v(1)=0.\label{linearized operator}
\end{equation}
where the auxiliary parameter $b$ approximates $a$ and is introduced to facilitate the bootstrap argument later used to control the error term $\epsilon$. See the choice of hyperparameter $b$ in \eqref{hyperparameter} and the modulation estimate in Proposition~\ref{Modulation equation proposition} and
Section~\ref{main theorem proof} for $k>1$.

Using the Lyapunov-Schmidt perturbation argument, we partially diagonalise $\mathcal{H}_b$ in $H_{b}^{1}$, as in \cite[Proposition 2.3]{Had-Raph}. However, unlike the unbounded case, the compactness of the domain introduces essential differences in spectral behavior. The eigenfunction $\psi_{b,j}$ is constructed as a perturbation of Bessel functions. See Section~\ref{preliminary} for more details. For sufficiently small $|b|<b^{*}(k)\ll1$, we obtain the eigenvalue expansion:
\begin{equation*}
    \lambda_{b,j}=\lambda_{j}-b+O(b^{2}), \quad \text{for } 1\leq j\leq k,
\end{equation*}
with corresponding eigenfunction satisfying
\begin{equation*}
\mathcal{H}_{b}\psi_{b,j}=\lambda_{b,j}\psi_{b,j},\quad \text{for }1\leq j\leq k.
\end{equation*}
Although the unperturbed problem admits a logarithmic singular eigenfunction associated with the zero eigenvalue, the spectral gap in $H^1_b$ due to the maximum principle precludes the appearance of such singular behavior. This is a key distinction from the exterior Stefan problem considered in \cite{Had-Raph}.

\textbf{2. Leading-order dynamics and modulation equations:} We consider approximate solutions of the form
\begin{itemize}
    \item For $k = 1$, we set
    \begin{equation*}
        v(s, y) = b(s) \psi_{b(s),1}(y).
    \end{equation*}
    \item For $k > 1$, we define
    \begin{equation*}
    v(s, y) = \sum_{j=1}^k b_j(s) \psi_{b(s),j}(y).
    \end{equation*}
\end{itemize}
In the case for $k>1$, we separate the parameter $b$ from the coefficients $b_{j}$ to construct a regime where the lower modes $b_{j}$, where
$1\leq j\leq k-1$ are trapped and $b_k$ dominates. This is due to the distribution of the eigenvalues of the Dirichlet Laplacian on $B_1(0)$. See Section~\ref{profile set up and bootstrap assumptions} and Remark~\ref{motivation for introduce b(s)} for details, along with the choice of parameter $b(s)$ in~\eqref{hyperparameter}.

Substituting the ansatz into equation \eqref{renormalised stefan} and projecting onto each eigenmode, we derive the modulation equations.\\
For $k = 1$, we obtain:
\[
b_s + b \lambda_{b,1} - (a - b) b \frac{\langle \Lambda \psi_{b,1}, \psi_{b,1} \rangle_b}{\langle \psi_{b,1}, \psi_{b,1} \rangle_b} = 0.
\]
For $k > 1$:
\[
(b_j)_s + b_j \lambda_{b,j} + a b_k \frac{\langle \Lambda \psi_{b,k}, \psi_{b,j} \rangle_b}{\langle \psi_{b,j}, \psi_{b,j} \rangle_b} = 0, \quad 1 \le j \le k.
\]
Additionally, the boundary condition $\partial_y v(s,1) = a$ implies:
\[
a = b\, \partial_y \psi_{b,1}(1) \quad \text{for } k = 1, \qquad a = \sum_{j=1}^k b_j\, \partial_y \psi_{b,j}(1) \quad \text{for } k > 1.
\]

Combining the modulation equation and the boundary condition, we derive the resulting dynamical system:
\begin{itemize}
    \item For $k = 1$,
\[ \begin{cases}
    b_s + \lambda_1 b + \sqrt{2\lambda_1} b^2 = 0, \\
    a = -\sqrt{2\lambda_1} b, \\
    \frac{ds}{dt} = \frac{1}{\lambda^2}, \quad -\frac{\lambda_s}{\lambda} = a.
    \end{cases}
    \]
    \item For $k > 1$, assuming $b_k \gg b_j$ for $j < k$,
    \[
    \begin{cases}
    (b_k)_s + \lambda_k b_k + (-1)^{k+1} \sqrt{2\lambda_k} b_k^2 = 0, \\
    a = (-1)^k \sqrt{2\lambda_k} b_k, \\
    \frac{ds}{dt} = \frac{1}{\lambda^2}, \quad -\frac{\lambda_s}{\lambda} = a.
    \end{cases}
    \]
\end{itemize}
Thus, the sign of $b_k(0)$ and the parity of $k$ determine whether the solution is in the freezing or melting regime.

\begin{remark}
Whether the solution lies in a melting or freezing regime is determined by the sign of the initial modulation parameter
$b_k(0)$ together with the parity of $k$. This phenomenon originates from the spectral behavior of the associated eigenfunctions near the boundary. The perturbed eigenfunctions $\psi_{b,k}$ are close to the unperturbed eigenfunctions $\eta_k$, which are rescaled Bessel functions of the first kind. As shown in Proposition~\ref{prop:1}, the sign of $\partial_y \psi_{b,k}(1)$ alternates with $k$, due to the oscillatory structure of Bessel functions near the boundary. This sign alternation is what determines whether a given $b_k(0)$ leads to freezing or melting, depending on the parity of $k$.
\end{remark}

\textbf{3. Energy estimates:} To control the error, we consider the exact solution $v$ of the form:
\[
v(s,y)=\sum_{j=1}^{k}b_{j}(s)\psi_{b(s),j}+\epsilon(s,y).
\]
For notational convenience, we set $b_1 = b$ as $k=1$. We impose orthogonality conditions. 
\begin{equation*}
\langle\epsilon,\psi_{b,j}\rangle_{b}=0,
\end{equation*}
to fix the evolution of $\epsilon$ and the modulation parameters $b_j$.
Then $\epsilon$ satisfies:
\[
\partial_s \epsilon + \mathcal{H}_b \epsilon + (a - b) \Lambda \epsilon = 0.
\]
To ensure that the error term $\epsilon$ does not interfere with the leading-order modulation dynamics, we set up a bootstrap argument controlling its size. In particular, we assume a smallness condition on $\norm{\epsilon(s)}_{H^1_b}$ in an appropriate norm over a time interval, and aim to improve this bound through energy estimates.

A key ingredient in closing this bootstrap argument is the coercivity of the operator $\mathcal{H}_b$ under appropriate orthogonality conditions. This coercivity ensures that the leading-order contribution of $\epsilon$ is dissipative, allowing us to gain decay in the weighted energy norm.

Differentiating the weighted $L^2$ norm yields: 
\begin{align*}
\frac{1}{2}\frac{d}{ds}\int_{0}^{1} & \epsilon^{2}(s,y)\rho_{b}(y)y\ dy=-\langle\mathcal{H}_{b}\epsilon,\epsilon\rangle_{b}\ \\
 & -\frac{1}{4}(b_{s}+2b(a-b))\int_{0}^{1}\epsilon^{2}\rho_{b}(y)y^{3}\ dy+(a-b)\norm{\epsilon}_{L_{b}^{2}}^2.
\end{align*}
Thanks to the orthogonality condition, we obtain coercivity of the first term from the spectral gap of $\mathcal{H}_b$ in $H^1_b$, which yields:
\[
\frac{1}{2} \frac{d}{ds} \|\epsilon(s)\|_{L^2_b}^2 \le -\lambda_{k+1} \|\epsilon(s)\|_{L^2_b}^2.
\]
However, to close the bootstrap argument, we must control pointwise boundary derivatives such as $\partial_y \epsilon(s,1)$. This cannot be achieved with $H^1$-regularity alone. Therefore, we establish the bootstrap in $H^2$ to control the boundary terms pointwise. 

Finally, a standard topological argument based on the Brouwer Fixed Point Theorem is used to select suitable initial data for the excited mode and complete the construction.

\subsection*{Acknowledgements}
We are partially supported by the National Research Foundation of Korea (NRF) under grants NRF-2019R1A5A1028324 and NRF-2022R1A2C1091499. We are grateful to Soonsik Kwon for suggesting this problem and for many helpful discussions.

\section{Spectral analysis of linearized operator \texorpdfstring{$\mathcal{H}_{b}$}{Lg}} 
To analyze the renormalized flow \eqref{renormalised stefan}, we begin by studying the spectral structure of the linearized operator
\[
\mathcal{H}_b = -\Delta + b\Lambda \quad \text{on } B_1(0), \quad \text{with } v(1) = 0.
\]
This operator captures the leading-order dynamics of the modulation parameters near the approximate profile and plays a central role in the modulation analysis. 

This approach was first introduced in the pioneering work of Herrero and Vel\'azquez~\cite{Herrero-Velaz}, where formal asymptotic profiles for the Stefan problem were derived. A rigorous justification for the two-dimensional exterior setting was later provided by Had\v{z}i\'c and Rapha\"el~\cite{Had-Raph}, who developed a spectral framework to construct the melting and freezing profiles. This strategy was further extended to three dimensions in \cite{3d-stefan}, building on the techniques introduced in $2$D case.

In this section, we carry out a detailed
spectral analysis of $\mathcal{H}_b$ in a
bounded domain $B_1(0)$. The boundedness of the domain induces a discrete spectrum for the radial Laplacian. More importantly, the spectral gap property, which follows from the maximum principle, rules out eigenvalues close to zero that would correspond to perturbations of the singular profile $\log r$, unlike in unbounded domains where such modes naturally arise. The spectral gap prevents the appearance of near-zero eigenvalues, allowing the modulation framework to generate global-in-time dynamics in the original time variable $t$.

We first recall several classical facts on the Bessel functions, 
which describe the eigenfunctions of the radial Laplacian in the unit ball. These serve as the unperturbed basis in our analysis. We then employ a Lyapunov–Schmidt reduction to treat the $b \Lambda$ term as a small perturbation. This allows us to obtain an explicit expansion of the exact eigenfunctions and eigenvalues of $\mathcal{H}_b$ for small $b$.
These results will be used in Section~\ref{profile set up and bootstrap assumptions} 
to derive the modulation estimates and to track the evolution of
the boundary dynamics.

\subsection{Preliminary}\label{preliminary}
We begin by recalling several classical results on the
eigenvalues and eigenfunctions of the radial Laplacian on the unit ball with Dirichlet boundary conditions \eqref{eq:1}. We also introduce the definition and key properties of the linearized operator $\mathcal{H}_{b}$, together with the notational conventions that will be used throughout Section~\ref{subsec:Partial-diagonalisation of }.

The eigenvalue problem \eqref{eq:1} is equivalent to 
\begin{equation}
y\partial_{y}^{2}u+\partial_{y}u+y\lambda u=0 \text{ in } y\in[0,1], \text{ where } u(1)=0,\quad u'(0)=0.\label{eq:2}
\end{equation}
for some $\lambda\in\mathbb{R}$. By rescaling the spatial variable
$y \coloneqq r/\sqrt{\lambda}$ for $\lambda>0$, equation \eqref{eq:2} reduces
to the Bessel equation of order zero: 
\begin{equation}
r^{2}\partial_{r}^{2}v+r\partial_{r}v+r^{2}v=0 \text{ in } r\in[0,\sqrt{\lambda}], \text{ where } v(\sqrt{\lambda})=0,\quad v'(0)=0.\label{Bessel-equation}
\end{equation}
Solutions to the Sturm--Liouville problem \eqref{Bessel-equation}
are given by the Bessel function of the first kind of order zero, denoted $J_{0}$.
This function is an even regular solution to the Bessel equation on $\bbR$, and possesses infinitely many simple zeros $\{ \pm r_j \}_{j \in \mathbb{N}}$, where $0<r_{j}<r_{j+1}$,
and $J_{0}'(0)=0$.
We construct solutions to \eqref{Bessel-equation}
by truncating $J_0$ at $r_{j}=\sqrt{\lambda_{j}}$. Rescaling back to the variable $y$, we obtain a family of solutions to \eqref{eq:2}, denoted $\{u_{j}:[0,1]\rightarrow\mathbb{R}\}_{j\in\mathbb{N}}$, where
\[
u_{j}(y)=J_{0}(y\sqrt{\lambda_{j}}),\qquad\lambda_{j}=r_{j}^{2}.
\]
From the well-known identity
\begin{equation*}
\int_{0}^{1}[J_{0}(y\sqrt{\lambda_{j}})]^{2}y\ dy=\frac{1}{2}[J_{0}'(\sqrt{\lambda_{j}})]^{2},
\end{equation*}
we define a sequence of normalized eigenfunctions $\{\eta_{j}\}_{j\in\mathbb{N}}\subset C^{\infty}(\overline{B_{1}(0)})$
in $L^{2}(B_{1}(0))$ by
\begin{equation}\label{renormalization of unperturbed eigenfunctions}
\eta_{j}(y)\coloneqq \frac{\sqrt{2}J_{0}(y\sqrt{\lambda_{j}})}{|J_{0}'(\sqrt{\lambda_{j}})|}\text{ on } y\in[0,1].
\end{equation}
The family $\{ \eta_j \}_{j \in \mathbb{N}}$ forms an
orthonormal basis in $L^2_{\mathrm{rad}}(B_1(0))$ with respect to the standard $L^2$ inner product:
\begin{equation*}
\langle\eta_{i},\eta_{j}\rangle_{0}=\delta_{ij}.
\end{equation*}
Moreover, the functions $\eta_j$ belong to $H^1$, and are orthogonal with respect to the $H^1$-inner product as well.

We now summarize several standard spectral properties of the eigenvalues $\lambda_j$ of the equation \eqref{eq:1}:\\
\textbf{1. Gap between consecutive eigenvalues:} 
\begin{equation}
\lambda_{k+1}-\lambda_{k}>1.\label{increment}
\end{equation}
\textbf{2. Spectral gap estimate:}
If $u\in H_{0}^{1}$ satisfies the orthogonality conditions 
\[
\langle u,\eta_{j}\rangle_{0}=0 \text{ for } 1\leq j\leq k,
\]
then we have the coercivity estimate:
\begin{equation}
\norm{\partial_{y}u}_{L_{0}^{2}}^{2}\geq\lambda_{k+1}\norm{u}_{L_{0}^{2}}^{2}.\label{spectralgapofeta}
\end{equation}

Based on the spectral properties of the limiting equation \eqref{eq:1}, we now investigate the spectrum of the linearized operator $\mathcal{H}_{b}=-\Delta+b\Lambda$
on $B_{1}(0)$, subject to the Dirichlet boundary condition $v(1)=0$, for small values of the parameter $b$.
Using the boundary condition and observing that the linearized operator $\mathcal{H}_b$ admits a divergence form, as noted in \cite{Had-Raph},
\begin{equation}\label{self adjointness of H_b}
-\Delta + b\Lambda = -\frac{1}{y\rho_b} \partial_y \left( y \rho_b \partial_y \right),
\end{equation}
where $\rho_b(y) = e^{-\frac{1}{2}by^2}$, one can easily verify that $\mathcal{H}_b$ is self-adjoint with respect to the weighted inner product $\langle \cdot, \cdot \rangle_b$ on $H_b^1$, as defined in \eqref{weightedL2product}. This divergence structure, which plays a key role in our energy estimates, was first highlighted in \cite{Had-Raph}.

\subsection{Partial diagonalisation of $\mathcal{H}_{b}$} \label{subsec:Partial-diagonalisation of }
In this subsection, we diagonalise the linearized operator $\mathcal{H}_{b}$,
for small values of the parameter $b$ by treating it as a perturbation of the \eqref{eq:1}. To obtain spectral information, we employ the Lyapunov--Schmidt reduction
argument following the approach in \cite{Had-Raph}. We fix an $K\in\mathbb{N}$ and aim to construct perturbed first $K$ eigenpairs $(\psi_{b,k},\lambda_{b,k})$ for $1\leq k\leq K$ that remain close to $(\eta_k,\lambda_k)$ for each $k$, provided $b \in (0,b^{*})$ and 
$b^{*}=b^{*}(K)$ sufficiently small. 

In view of the Lyapunov--Schmidt reduction method, we decompose the eigenfunction $\psi_{b,k}$ of $\mathcal{H}_b$.
\[
\begin{cases}
\psi_{b,k}=T_{b,k}+\tilde{\phi}_{b,k},\\
T_{b,k}(y)=\eta_{k}(y)+\sum_{j=1}^{k-1}\mu_{b,jk}\eta_{j}(y),
\end{cases}
\]
as in \eqref{decompose of psi_{b,k}}, where $\tilde{\phi}_{b,k}$ is a correction term. We also rewrite the eigenvalue as in \eqref{eq:11}
\[\lambda_{b,k} = \lambda_k + \mu_{b,k}.\]
Substituting this decomposition into the eigenvalue problem $\mathcal{H}_b\psi_{b,k} = \lambda_{b,k}\psi_{b,k}$, we reinterpret it as an equation for $\tilde{\phi}_{b,k}$:
\begin{align*}
    (\mathcal{H}_{b}-\lambda_{k})\tilde{\phi}_{b,k} & =-\sum_{j=1}^{k-1}\mu_{b,jk}(\lambda_{j}-\lambda_{k})\eta_{j}-b\Lambda\left(\eta_{k}+\sum_{j=1}^{k-1}\mu_{b,jk}\eta_{j}\right)\\
    &\quad  +\mu_{b,k}\left(\eta_{k}+\sum_{j=1}^{k-1}\mu_{b,jk}\eta_{j}\right)+\mu_{b,k}\tilde{\phi}_{b,k}
\end{align*}
Our goal is to derive asymptotic expansions for $\mu_{b,k}$ and $\mu_{b,jk}$ for $1\leq j < k$, such that $\tilde{\phi}_{b,k}$ remains small for sufficiently small $b$.

To construct such a solution, we apply the Banach Fixed Point Theorem, relying on the near-invertibility of the operator $\mathcal{H}_b-\lambda_k$ on the orthogonal complement of the
span $\{ \eta_1, \dots, \eta_k \}$. 

To formalize this construction, we first define the Gram matrix $M_{b,k}$, for each $0<k\leq K$ by
\begin{equation*}
M_{b,k}\coloneqq\left(\langle\eta_{i},\eta_{j}\rangle_{b}\right)_{1\leq i,j\leq k}.
\end{equation*}
For small $b$, this matrix is close to the identity and therefore invertible.

Next, we define the modified operator $\tilde{H}_{b,k} : H^1_b \rightarrow L^2_b$ by
\begin{equation*}
    \tilde{H}_{b,k}u \coloneqq (\mathcal{H}_{b}-\lambda_{k})u-b\left(M_{b,k}^{-1}\left(\langle u,\Lambda\eta_{j}\rangle_{b}\right)_{1\leq j\leq k},\Pi_{k}\right),
\end{equation*}
where $\Pi_k \coloneqq (\eta_1, \cdots, \eta_k)$.
This operator $\tilde{H}_{b,k}$ is designed to map functions that are orthogonal to $\text{span}\{ \eta_1, \dots, \eta_k \}$ into the same orthogonal subspace of \( L^2_b \).

Lemma~\ref{lemma:near invertibility} says that $\tilde{H}_{b,k}$ is invertible in this orthogonal complement. The proof is based on a perturbative argument based on the spectral gap estimate \eqref{spectralgapofeta}, which is derived using a standard variational method. A complete proof is provided in Appendix~\ref{appendix}.

\begin{lemma}[Near invertibility of $\mathcal{H}_{b}-\lambda_{k}$]
\label{lemma:near invertibility} Let $K$ be as above and $1\le k\le K$.
There exists a small constant $b^{*}=b^{*}(K)>0$ such that for each
$|b|<b^{*}(K) \ll 1$, the following statement is true : For $f\in L_{b}^{2}\cap L^{\infty}(B_{1}(0))$
with 
\begin{equation*}
\langle f,\eta_{j}\rangle_{b}=0,\quad 1\leq j\leq k
\end{equation*}
there is a unique $u\in H_{b}^{1}$ such that 
\begin{equation}
\begin{cases}
\tilde{H}_{b,k}u=f,\\
\langle u,\eta_{j}\rangle_{b}=0,\quad 1\leq j\leq k.
\end{cases}\label{eq:3}
\end{equation}
In this case, we have the estimate 
\begin{equation}\label{estimatesofinvertibility}
\norm{u}_{H_{b}^{1}}\lesssim\norm{f}_{L_{b}^{2}}.
\end{equation}
\end{lemma}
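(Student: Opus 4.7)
The plan is to reformulate \eqref{eq:3} variationally on the closed subspace
\[V_{b,k} \coloneqq \{u \in H_b^1 : \langle u, \eta_j\rangle_b = 0,\ 1 \le j \le k\}\]
of $H_b^1$ and then invoke the Lax–Milgram theorem. First I would verify two structural facts about $\tilde H_{b,k}$. (i) For any $u \in V_{b,k}$, one has $\tilde H_{b,k} u \in V_{b,k}$: by the self-adjointness of $\mathcal{H}_b$ together with $\mathcal{H}_b \eta_l = \lambda_l \eta_l + b\Lambda\eta_l$ and $\langle u,\eta_l\rangle_b = 0$, one computes $\langle(\mathcal{H}_b - \lambda_k) u, \eta_l\rangle_b = b\langle u,\Lambda\eta_l\rangle_b$, which is cancelled exactly by the subtracted correction, by design of $M_{b,k}^{-1}$. (ii) Since the correction is a linear combination of $\{\eta_j\}_{j=1}^k$, it vanishes when paired in $\langle\cdot,\cdot\rangle_b$ with any $v \in V_{b,k}$. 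Consequently, on $V_{b,k}\times V_{b,k}$ the form $\langle \tilde H_{b,k} u, v\rangle_b$ coincides with the symmetric bilinear form
\[a_b(u,v) \coloneqq \int_0^1 \partial_y u\, \partial_y v\, \rho_b(y) y\, dy - \lambda_k \int_0^1 uv\, \rho_b(y) y\, dy\]
(using the divergence form \eqref{self adjointness of H_b}).

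The heart of the proof is uniform coercivity $a_b(u,u) \gtrsim \|u\|_{H_b^1}^2$ on $V_{b,k}$ for all $|b| < b^*(K)$. At $b = 0$ this is immediate from the spectral gap \eqref{spectralgapofeta} combined with \eqref{increment}: on $V_{0,k}$ we have
\[a_0(u,u) = \|\partial_y u\|_{L_0^2}^2 - \lambda_k\|u\|_{L_0^2}^2 \ge \frac{\lambda_{k+1}-\lambda_k}{\lambda_{k+1}}\|\partial_y u\|_{L_0^2}^2 \gtrsim \|u\|_{H_0^1}^2.\]
The main technical obstacle is the perturbation from $V_{0,k}$ to $V_{b,k}$. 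I would bridge this by decomposing any $u \in V_{b,k}$ as $u = u_0 + \sum_{j=1}^k c_j \eta_j$ with $u_0 \in V_{0,k}$ and $c_j \coloneqq \langle u,\eta_j\rangle_0$. Since $\langle u,\eta_j\rangle_b = 0$ and $|\rho_b - 1| \lesssim |b|$ pointwise on $[0,1]$, it follows that $|c_j| \lesssim |b|\, \|u\|_{L_b^2}$, so both $u - u_0$ and the discrepancy between $\rho_b$ and $1$ contribute only terms of order $|b|\, \|u\|_{H_b^1}^2$ to the quadratic form. Substituting and applying the $b=0$ coercivity to $u_0$, these perturbative errors are absorbed provided $b^*(K)$ is chosen sufficiently small; the analogous argument shows that $\|\cdot\|_{H_b^1}$ and $\|\cdot\|_{H_0^1}$ are equivalent uniformly in $b$.

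Continuity of $a_b$ on $H_b^1$ is immediate, so Lax–Milgram on the closed subspace $V_{b,k}$ produces a unique $u \in V_{b,k}$ with $a_b(u,v) = \langle f,v\rangle_b$ for every $v \in V_{b,k}$, together with the bound $\|u\|_{H_b^1} \lesssim \|f\|_{L_b^2}$ giving \eqref{estimatesofinvertibility}. To promote this weak identity to the strong one $\tilde H_{b,k} u = f$ as elements of $L_b^2$, I would argue that $\tilde H_{b,k} u - f \in V_{b,k}$ by (i) and the hypothesis $f \in V_{b,k}$, while the weak identity combined with (ii) shows $\tilde H_{b,k} u - f$ is $\langle\cdot,\cdot\rangle_b$-orthogonal to $V_{b,k}$; the difference therefore vanishes. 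The delicate ingredient throughout is the uniform-in-$b$ coercivity; once the $b$-dependence of $V_{b,k}$ is quantified, the remainder is routine variational machinery.
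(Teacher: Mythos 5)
Your proposal is correct and follows essentially the same route as the paper, which itself frames its proof as ``a Lax--Milgram type argument'': both hinge on the perturbed spectral gap on the $\langle\cdot,\cdot\rangle_b$-orthogonal complement of $\mathrm{span}\{\eta_1,\dots,\eta_k\}$, obtained from the $b=0$ gap \eqref{spectralgapofeta} via the estimate $|\langle u,\eta_j\rangle_0|\lesssim|b|\,\norm{u}_{L_b^2}$, followed by routine variational machinery and the duality between the weak formulation and $\tilde{H}_{b,k}u=f$. The only packaging difference is that the paper runs a direct minimization and identifies the Lagrange multipliers a posteriori with the correction term in $\tilde{H}_{b,k}$, whereas you observe a priori that this correction is designed exactly so that $\tilde{H}_{b,k}$ preserves the orthogonal complement --- an equivalent piece of bookkeeping.
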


We are now ready to state the main proposition of this section, which offers asymptotic expansions of both the eigenvalues and eigenfunctions of the perturbed operator \( \mathcal{H}_b \) for small values of \( b \). This finding serves as the spectral basis for the modulation analysis explored in the subsequent sections. It clarifies how the dominant spectral characteristics of the original problem \eqref{eq:1} are altered by the addition of the perturbative term \( b \Lambda \), thus providing a rigorous foundation for the decomposition framework outlined previously.

\begin{proposition}[Eigenvalues for $\mathcal{H}_{b}$]
\label{prop:1} Let $K\ge1$ be a large fixed parameter. There exists
a small parameter $b^{*}(K)>0$ such that for each $|b|<b^{*}(K)$,
$\mathcal{H}_{b}$ admits a sequence of eigenvalues, for $1\leq k\leq K$,
\begin{equation}
\mathcal{H}_{b}\psi_{b,k}=\lambda_{b,k}\psi_{b,k},\quad \psi_{b,k}\in H_{b}^{1}\label{eq:10}
\end{equation}
with the following properties:

1. Expansion of Eigenvalue : 
\begin{equation}\label{asymptotic expansion of lambda_{b,k}}
\lambda_{b,k}=\lambda_{k}-b+O(b^{2}),
\end{equation}
\begin{equation}
\partial_{b}\lambda_{b,k}=-1+O(|b|).\label{differentiation of lambda_{b,k} w.r.t b}
\end{equation}
2. Expansion of Eigenfunction : 
\begin{align}\label{decompose of psi_{b,k}}
\begin{cases}
\psi_{b,k}=T_{b,k}+\tilde{\phi}_{b,k},\\
T_{b,k}(y)=\eta_{k}(y)+\sum_{j=1}^{k-1}\mu_{b,jk}\eta_{j}(y),
\end{cases}
\end{align}
with 
\begin{align}\label{value of mu_{b,jk}}
\mu_{b,jk}=\frac{b}{\lambda_{k}-\lambda_{j}}\langle\Lambda\eta_{k},\eta_{j}\rangle_{0}+O(b^{2}),
\end{align}
\begin{align}\label{value of partial_b mu_{b,jk}}
\partial_{b}\mu_{b,jk}=\frac{1}{\lambda_{k}-\lambda_{j}}\langle\Lambda\eta_{k},\eta_{j}\rangle_{0}+O(|b|),
\end{align}
and 
\begin{align}\label{estimates for tilde{phi}_{b,k}}
\norm{\mathcal{H}_{b}\tilde{\phi}_{b,k}}_{L_{b}^{2}}+\norm{\tilde{\phi}_{b,k}}_{H_{b}^{1}}+\norm{b\partial_{b}\tilde{\phi}_{b,k}}_{H_{b}^{1}}+|\partial_{y}\tilde{\phi}_{b,k}(1)|\lesssim|b|.
\end{align}
3. Spectral gap estimates: For $u\in H_{b}^{1}$, and $\langle u,\psi_{b,j}\rangle_{b}=0$
for $1\leq j\leq k$, 
\begin{equation}\label{spectralgap1}
\norm{\partial_{y}u}_{L_{b}^{2}}^{2}\geq(\lambda_{k+1}+O(|b|))\norm{u}_{L_{b}^{2}}^{2}.
\end{equation}
Moreover, 
\begin{equation}\label{minimalityofeigen}
\lambda_{b,1}=\inf_{u\in H_{b}^{1}}\frac{\norm{\partial_{y}u}_{L_{b}^{2}}^{2}}{\norm{u}_{L_{b}^{2}}^{2}},
\end{equation}
\begin{equation}\label{maximumprin}
\psi_{b,1}(y)>0\quad\text{for}\quad|y|<1.
\end{equation}
4. Further estimates :
 $L_{b}^{2}$ norm of $\psi_{b,k}$ and $\partial_{b}\psi_{b,k}$:
\begin{equation}\label{Norm of psi}
\norm{\psi_{b,k}}_{L_{b}^{2}}=1+O(|b|),
\end{equation}
and
\begin{equation}\label{Norm of partial _b psi}
\norm{b\partial_{b}\psi_{b,k}}_{H_{b}^{1}}=O(|b|).
\end{equation}
Inner product of $\psi_{b,i}$ with $b\partial_{b}\psi_{b,j}$: 
\begin{equation}\label{IVTesitmates}
\begin{aligned} & \langle b\partial_{b}\psi_{b,i},\psi_{b,j}\rangle_{b}=O(|b|) \text{ for } 1\leq j<i\leq k,\\
 & \langle b\partial_{b}\psi_{b,j},\psi_{b,k}\rangle_{b}=O(b^{2}) \text{ for } 1\leq j\leq k.
\end{aligned}
\end{equation}
\end{proposition}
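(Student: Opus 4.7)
My plan is to insert the ansatz \eqref{decompose of psi_{b,k}} with $\lambda_{b,k}=\lambda_k+\mu_{b,k}$ into the eigenvalue equation $\mathcal{H}_b\psi_{b,k}=\lambda_{b,k}\psi_{b,k}$ and exploit $-\Delta\eta_j=\lambda_j\eta_j$. Grouping terms, one obtains
\[
(\mathcal{H}_b-\lambda_k)\tilde{\phi}_{b,k}=-b\Lambda T_{b,k}-\sum_{j<k}\mu_{b,jk}(\lambda_j-\lambda_k)\eta_j+\mu_{b,k}(T_{b,k}+\tilde{\phi}_{b,k}),
\]
together with the normalization $\langle\tilde{\phi}_{b,k},\eta_j\rangle_b=0$ for $1\le j\le k$. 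Testing the eigenvalue equation against $\eta_j$ in $\langle\cdot,\cdot\rangle_b$ and using the self-adjointness of $\mathcal{H}_b$ yields the $k$ scalar compatibility relations
\[
(\lambda_k-\lambda_j+\mu_{b,k})\langle\psi_{b,k},\eta_j\rangle_b=b\langle\psi_{b,k},\Lambda\eta_j\rangle_b,\qquad 1\le j\le k,
\]
which determine $\mu_{b,k}$ (from $j=k$) and the $\{\mu_{b,jk}\}_{j<k}$ (from $j<k$) once $\tilde{\phi}_{b,k}$ is known.

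\textbf{Fixed point construction.} I would package $(\tilde{\phi}_{b,k},\mu_{b,k},\{\mu_{b,jk}\}_{j<k})$ as a single unknown in $H^1_b\times\mathbb{R}^k$ and cast the system as a fixed-point problem on a closed ball of radius $\lesssim|b|$. The PDE for $\tilde{\phi}_{b,k}$ is first projected so that its source is $\langle\cdot,\cdot\rangle_b$-orthogonal to $\mathrm{span}\{\eta_1,\dots,\eta_k\}$---this is precisely the role of the nonlocal correction in the modified operator $\tilde{H}_{b,k}$---and then inverted by Lemma \ref{lemma:near invertibility} with the estimate \eqref{estimatesofinvertibility}. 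The scalar compatibility relations, combined with $\langle\eta_i,\eta_j\rangle_b=\delta_{ij}+O(|b|)$, recover the $\mu$'s as smooth functions of $\tilde{\phi}_{b,k}$. Because every source term carries a universal factor of $b$, the composite map is a contraction for $|b|<b^\ast(K)$, and Banach's fixed point theorem produces a unique pair $(\psi_{b,k},\lambda_{b,k})$, depending smoothly on $b$.

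\textbf{Expansions, derivatives, and qualitative properties.} Substituting the $O(|b|)$ fixed-point solution back into the scalar compatibility relations produces the leading orders in \eqref{asymptotic expansion of lambda_{b,k}} and \eqref{value of mu_{b,jk}}; in particular the identity $\langle\Lambda\eta_k,\eta_k\rangle_0=-1$, obtained by integration by parts using $\eta_k(1)=0$ and the normalization $\|\eta_k\|_{L^2_0}=1$, yields $\mu_{b,k}=-b+O(b^2)$. The bounds in \eqref{estimates for tilde{phi}_{b,k}} come directly from Lemma \ref{lemma:near invertibility} applied to an $L^2_b$-source of size $O(|b|)$, while the boundary estimate on $\partial_y\tilde{\phi}_{b,k}(1)$ follows by standard elliptic/trace regularity for the equation satisfied by $\tilde{\phi}_{b,k}$. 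Smoothness in $b$ and the derivative formulas \eqref{differentiation of lambda_{b,k} w.r.t b}, \eqref{value of partial_b mu_{b,jk}}, and \eqref{Norm of partial _b psi} follow from the implicit function theorem applied to the fixed-point map, using that $\tilde{H}_{b,k}$ is uniformly invertible in $b$. For the coercivity \eqref{spectralgap1}, I would expand a test function with $\langle u,\psi_{b,j}\rangle_b=0$ $(j\le k)$ in the $\{\eta_j\}$-basis, use $\psi_{b,j}=\eta_j+O(|b|)$ and $\langle\cdot,\cdot\rangle_b=\langle\cdot,\cdot\rangle_0+O(|b|)$ to reduce orthogonality against $\{\psi_{b,j}\}$ to orthogonality against $\{\eta_j\}$ up to an $O(|b|)$ loss, and then apply \eqref{spectralgapofeta}. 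The identity \eqref{minimalityofeigen} is Rayleigh--Ritz for the self-adjoint operator $\mathcal{H}_b$, and \eqref{maximumprin} follows because $|\psi_{b,1}|$ also realizes the infimum while the strong maximum principle---applicable to the divergence form \eqref{self adjointness of H_b}---rules out sign changes. The normalization \eqref{Norm of psi} is then immediate from the decomposition and $\|\tilde{\phi}_{b,k}\|_{L^2_b}=O(|b|)$.

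\textbf{Main obstacle.} The pairing bounds \eqref{IVTesitmates} are the most delicate point. My plan is to differentiate $\mathcal{H}_b\psi_{b,k}=\lambda_{b,k}\psi_{b,k}$ in $b$ and test against $\psi_{b,j}$; combined with the eigenfunction orthogonality $\langle\psi_{b,i},\psi_{b,j}\rangle_b=0$ for $i\ne j$, this produces explicit expressions for $\langle\partial_b\psi_{b,i},\psi_{b,j}\rangle_b$ in terms of $\langle\Lambda\psi_{b,i},\psi_{b,j}\rangle_b$ and the weight-correction term $\int y^3\psi_{b,i}\psi_{b,j}\rho_b dy$ generated by $\partial_b\rho_b=-\tfrac{1}{2}y^2\rho_b$. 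The crude bound $\|b\partial_b\psi_{b,i}\|_{H^1_b}=O(|b|)$ already delivers the first line of \eqref{IVTesitmates}, but the sharper $O(b^2)$ bound in the second line requires extracting a cancellation via the identity $\langle f,\Lambda g\rangle_b+\langle\Lambda f,g\rangle_b=-2\langle f,g\rangle_b+b\int y^3 fg\rho_b dy$, together with the eigenvalue gap $\lambda_{b,k}-\lambda_{b,j}\gtrsim 1$. Tracking these cancellations uniformly across all $1\le k\le K$, and synchronizing them with the leading orders of $\mu_{b,k}$ and $\partial_b\mu_{b,jk}$, is the principal technical challenge of the proof.
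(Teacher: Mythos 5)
Your overall strategy coincides with the paper's: the same Lyapunov--Schmidt ansatz $\psi_{b,k}=T_{b,k}+\tilde\phi_{b,k}$, $\lambda_{b,k}=\lambda_k+\mu_{b,k}$, the same use of Lemma~\ref{lemma:near invertibility} to invert on $\mathrm{span}\{\eta_1,\dots,\eta_k\}^{\perp}$, the same integration by parts $\langle\Lambda\eta_k,\eta_k\rangle_0=-1$, the same perturbative reduction of \eqref{spectralgap1} to \eqref{spectralgapofeta}, and the same variational/maximum-principle argument for \eqref{minimalityofeigen}--\eqref{maximumprin}. The only organizational differences (a joint fixed point in $H^1_b\times\mathbb{R}^k$ versus the paper's nested one, where $\vec\mu(\tilde\phi)$ is solved first; trace regularity versus the paper's duality pairing $\langle\mathcal{H}_b\tilde\phi_{b,k},y\rangle_b$ for $|\partial_y\tilde\phi_{b,k}(1)|$; implicit function theorem versus the paper's explicit differentiation plus a coercivity estimate for $\|\partial_b\tilde\phi_{b,k}\|_{H^1_b}=O(1)$) are harmless. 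One small caution: your scalar compatibility relations produce the correct coefficient in \eqref{value of mu_{b,jk}} only after combining $\langle\eta_k,\Lambda\eta_j\rangle_0=-\langle\Lambda\eta_k,\eta_j\rangle_0$ with the weight correction $\langle\eta_k,\eta_j\rangle_b=-\tfrac b2\langle y^2\eta_k,\eta_j\rangle_0+O(b^2)$ and the identity $(\lambda_k-\lambda_j)\langle y^2\eta_k,\eta_j\rangle_0=4\langle\Lambda\eta_k,\eta_j\rangle_0$; without tracking that cancellation you would get the wrong sign.

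The genuine problem is your route to the second line of \eqref{IVTesitmates}. Testing the $b$-differentiated eigenvalue equation against $\psi_{b,k}$ and using the exact orthogonality $\langle\psi_{b,j},\psi_{b,k}\rangle_b=0$ gives, for $j\neq k$, the exact identity
\begin{equation*}
\langle b\partial_b\psi_{b,j},\psi_{b,k}\rangle_b=\frac{b\,\langle\Lambda\psi_{b,j},\psi_{b,k}\rangle_b}{\lambda_{b,j}-\lambda_{b,k}},
\end{equation*}
and the numerator converges to $\langle\Lambda\eta_j,\eta_k\rangle_0=-\langle\Lambda\eta_k,\eta_j\rangle_0$, which is generically nonzero (it is exactly the coupling constant appearing in \eqref{modulation esitmates for k>1}). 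So this computation yields only $O(|b|)$, and no cancellation identity of the type $\langle f,\Lambda g\rangle_b+\langle\Lambda f,g\rangle_b=-2\langle f,g\rangle_b+O(b)$ can improve it, since that identity merely flips $\langle\Lambda\psi_{b,j},\psi_{b,k}\rangle_b$ into $-\langle\psi_{b,j},\Lambda\psi_{b,k}\rangle_b+O(b)$, still of size $O(1)$. Moreover, for $j=k$ the tested equation gives the Feynman--Hellmann formula for $\partial_b\lambda_{b,k}$ and says nothing about $\langle\partial_b\psi_{b,k},\psi_{b,k}\rangle_b$, which is normalization-dependent. The paper instead reads the bound off the explicit decomposition \eqref{diffofpsiwrtb}: $b\partial_b\psi_{b,k}$ has no $\eta_k$-component, its $\eta_s$-components ($s<k$) carry a factor $b\partial_b\mu_{b,sk}=O(b)$ and pair with $\psi_{b,k}$ at size $O(b)$, and $\partial_b\tilde\phi_{b,k}$ is almost $\langle\cdot,\cdot\rangle_b$-orthogonal to $\eta_1,\dots,\eta_k$ by \eqref{(partial_b tilde{phi}_{b,k},eta_j)=O(b)}; this is what delivers $O(b^2)$ in the case $j=k$, the only case actually used in the modulation analysis. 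For $j<k$ your exact formula shows the quantity is genuinely of order $b$, so the $O(b^2)$ bound should not be pursued there; you must fall back on the $O(|b|)$ bound of the first line. In short: replace your ``main obstacle'' strategy by the direct expansion of $b\partial_b\psi_{b,j}$ against the decomposition of $\psi_{b,k}$, and restrict the sharp $O(b^2)$ claim to the diagonal pairing.
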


\begin{proof} We fix $K \in \mathbb{N}$ and choose $b^*(K) > 0$ as in Lemma~\ref{lemma:near invertibility}, and assume that $|b| < b^*(K)$. We may further reduce $b^*(K)$, if necessary, and will indicate this explicitly each time, while continuing to use the same notation.

\noindent \textbf{Step1:}
We employ the Lyapunov--Schmidt reduction and seek a perturbed eigenpair of $\mathcal{H}_b$ in the form
\begin{align}
\lambda_{b,k} & = \lambda_k + \mu_{b,k}, \label{eq:11}\\
\psi_{b,k} & = T_{b,k} + \tilde{\phi}_{b,k}, \label{eq:12}
\end{align}
where $T_{b,k}$ is the finite-dimensional ansatz defined by
\begin{equation}
T_{b,k}(y) \coloneqq \eta_k(y) + \sum_{j=1}^{k-1} \mu_{b,jk} \eta_j(y), \label{c:1}
\end{equation}
and $\tilde{\phi}_{b,k}$ is a correction term orthogonal to $\text{span} \{ \eta_1, \dots, \eta_k \}$ with respect to the weighted inner product $\langle \cdot, \cdot \rangle_b$.

Both $\sum_{j=1}^{k-1}\mu_{b,jk} \eta_j$ and $\tilde{\phi}_{b,k}$ are of the same order in $b$ in the asymptotic expansion. This separation is made in order to isolate the part of the error that lies in the coercive subspace, where the invertibility of the linearized operator can be established: we isolate the components along $\{\eta_j\}_{1 \le j \le k-1}$ and absorb the remainder in $\tilde{\phi}_{b,k}$, allowing us to apply a fixed-point argument to establish existence.

Substituting \eqref{eq:11}--\eqref{eq:12} into the eigenvalue equation \eqref{eq:10}, we obtain
$(\mathcal{H}_b - \lambda_k)\tilde{\phi}_{b,k} = -(\mathcal{H}_b-\lambda_k)T_{b,k} + \mu_{b,k}(T_{b,k} + \tilde{\phi}_{b,k})$. We now expand the left-hand side using the linearity of the operator. Recalling that $\eta_j$ is an eigenfunction of the unperturbed equation \eqref{eq:1} we compute
\begin{equation}\label{(H_b-lambda_k)T_{b,k}}
\begin{aligned}
(\mathcal{H}_b - \lambda_k) T_{b,k}(y)
&= \left( -\Delta + b \Lambda - \lambda_k \right) \left( \eta_k(y) + \sum_{j=1}^{k-1} \mu_{b,jk} \eta_j(y) \right) \\
&= \sum_{j=1}^{k-1} \mu_{b,jk} (\lambda_j - \lambda_k) \eta_j(y)
+ b \Lambda \left( \eta_k(y) + \sum_{j=1}^{k-1} \mu_{b,jk} \eta_j(y) \right).
\end{aligned}
\end{equation}
Using the identity \eqref{(H_b-lambda_k)T_{b,k}} and the collecting terms, we obtain the following equation for the correction term $\tilde{\phi}_{b,k}$:  
\begin{equation}\label{equation for tilde{phi}_{b,k}}
\begin{aligned}
(\mathcal{H}_b - \lambda_k) \tilde{\phi}_{b,k}(y)
&= -\sum_{j=1}^{k-1} \mu_{b,jk} (\lambda_j - \lambda_k) \eta_j(y)
- b \Lambda \left( \eta_k(y) + \sum_{j=1}^{k-1} \mu_{b,jk} \eta_j(y) \right) \\
&\quad + \mu_{b,k} \left( \eta_k(y) + \sum_{j=1}^{k-1} \mu_{b,jk} \eta_j(y) \right)
+ \mu_{b,k} \tilde{\phi}_{b,k}(y).
\end{aligned}
\end{equation}
Now, define $B_{\alpha}$ as a small ball in $H^1_b$, centered at zero, for a sufficiently large universal constant
$\alpha>0$:
\[
\begin{aligned}B_{\alpha}\coloneqq \{\tilde{\phi}\in H_{b}^{1}: & |\langle\tilde{\phi},\Lambda\eta_{j}\rangle_{b}|+\norm{\tilde{\phi}}_{H_{b}^{1}}\leq\alpha|b|,\\
 & \text{ and } \langle\tilde{\phi},\eta_{j}\rangle_{b}=0 \text{ for } j=1,2,\cdots,k\}.
\end{aligned}
\]
We define a nonlinear map $f:B_{\alpha}\rightarrow H_{b}^{1}$ by
\begin{equation}\label{f(phi)}
\begin{aligned}f(\tilde{\phi}) & \coloneqq -\sum_{j=1}^{k-1}\mu_{b,jk}(\lambda_{j}-\lambda_{k})\eta_{j}-b\Lambda\left(\eta_{k}+\sum_{j=1}^{k-1}\mu_{b,jk}\eta_{j}\right)\\
 &\quad +\mu_{b,k}\left(\eta_{k}+\sum_{j=1}^{k-1}\mu_{b,jk}\eta_{j}\right)+\mu_{b,k}\tilde{\phi}.
\end{aligned}
\end{equation}
Then, the equation for $\tilde{\phi}_{b,k}$ \eqref{equation for tilde{phi}_{b,k}} is equivalent to 
\begin{equation}\label{equation for tilde{phi}}
    (\mathcal{H}_b-\lambda_k)\tilde{\phi}_{b,k} = f(\tilde{\phi}_{b,k}),
\end{equation} 
in a ball $B_{\alpha}$, with a well-chosen coefficient $\mu_{b,k}$ and $\mu_{b,jk}$ for $1\leq j <k$.

We now initiate the Lyapunov--Schmidt reduction to construct the eigenfunction $\psi_{b,k}$. The construction proceeds in two steps: we first determine the finite-dimensional coefficients $\mu_{b,k}$ and $\mu_{b,jk}$ depending on $\tilde{\phi} \in B_{\alpha}$, and then find the correction term $\tilde{\phi} = \tilde{\phi}_{b,k}$ satisfying \eqref{equation for tilde{phi}} by a fixed-point argument.\\

\noindent \textbf{1. Computations of coefficients $\mu_{b,k}$ and $\mu_{b,jk}$:} To simplify the notation, we define the coefficient vector $\vec{\mu}$ and its supremum norm as $\norm{\vec{\mu}}_{\infty}$ to be 
\begin{align*}
    & \vec{\mu}\coloneqq \left(\left(\mu_{b,jk}\right)_{1\leq j\leq k-1},\ \mu_{b,k}\right),\\
    & \norm{\vec{\mu}}_{\infty}\coloneqq \max\{|(\mu_{b,jk})_{1\leq j\leq k-1}|,|\mu_{b,k}|\}.
\end{align*}
For each $\tilde{\phi}\in B_{\alpha}$, we choose $\vec{\mu}(\tilde{\phi})$ so that $\norm{\vec{\mu}(\tilde{\phi})}_{\infty}=O(|b|)$ and satisfy
\begin{equation}\label{eqformu}
\langle f(\tilde{\phi}),\eta_{j}\rangle_{b}=b\left(\langle\tilde{\phi},\Lambda\eta_{j}\rangle_{b}\right) \text{ for } 1\leq j\leq k.
\end{equation}
The condition \eqref{eqformu} is imposed to ensure that $f(\tilde{\phi})$ is in the range of the operator $\mathcal{H}_b - \lambda_k$. More precisely, it guarantees that the projection of $f(\tilde{\phi})$ onto $\text{span}\{\eta_1, \dots, \eta_k\}^\perp$ lies in the domain of $\tilde{H}_{b,k}^{-1}$. This compatibility condition \eqref{eqformu} is essential for applying the near-invertibility lemma~\ref{lemma:near invertibility}, and it leads to the following system:
\begin{equation}\label{eq:14}
    \begin{aligned}
        \langle f(\tilde{\phi}),\eta_{j}\rangle_{b} & =-\sum_{i=1}^{k-1}\mu_{b,ik}(\lambda_{i}-\lambda_{k})\langle\eta_{i},\eta_{j}\rangle_{b}\\
        &\quad -b\langle\Lambda\eta_{k},\eta_{j}\rangle_{b}-b\sum_{i=1}^{k-1}\mu_{b,ik}\langle\Lambda\eta_{i},\eta_{j}\rangle_{b}\\
        &\quad +\mu_{b,k}\left(\langle\eta_{k},\eta_{j}\rangle_{b}+\sum_{i=1}^{k-1}\mu_{b,ik}\langle\eta_{i},\eta_{j}\rangle_{b}\right)\\
        & =b(\langle\tilde{\phi},\Lambda\eta_{j}\rangle_{b}).
    \end{aligned}
\end{equation}
for $1\leq j\leq k$. This yields a nonlinear system of $k$ equations for the vector $\vec{\mu} \in \mathbb{R}^k$, which depends smoothly on the parameter $b$ and the function $\tilde{\phi} \in B_\alpha$. We claim that for each $\tilde{\phi} \in B_{\alpha}$ there is a unique $\vec{\mu}$ with $\norm{\mu}_{\infty} = O(b)$ that solves \eqref{eq:14}, which can be obtained via the Banach Fixed Point Theorem. To see this, let $A_{b,k}$ be a $k\times k$ matrix whose $j$-th row vector, denoted by $(A_{b,k})_j$, is given by
\[(A_{b,k})_j \coloneqq \left(\left((\lambda_k - \lambda_i)\langle \eta_i,\eta_j\rangle_b - b\langle \Lambda\eta_i,\eta_j\rangle_b\right)_{1\leq i \leq k-1} , \langle \eta_k,\eta_j\rangle_b\right). \]
Note that $A_{b,k} = \text{diag}(\lambda_k-\lambda_1,\cdots, \lambda_k - \lambda_{k-1},1) \cdot I_k + O(|b|)$, and hence invertible for sufficiently small $b$. 
Then \eqref{eq:14} is equivalent to 
\begin{equation}\label{eqformu matrix form}
    \vec{\mu} = A_{b,k}^{-1}G_k(\vec{\mu},\tilde{\phi}),
\end{equation}
where $G_k$ is a $k$-column vector whose $j$-th entry is defined by
\[
 (G_k)_j \coloneqq b\langle \tilde{\phi},\Lambda\eta_j \rangle_b + b \langle \Lambda \eta_k,\eta_j \rangle_b + \mu_{b,k}\sum_{i=1}^{k-1}\mu_{b,ik}\langle \eta_i ,\eta_j \rangle_b.
\]
For each $\tilde{\phi} \in B_{\alpha}$, consider the map
\begin{equation}\label{functional for finite dimension}
    \vec{\mu} \mapsto A_{b,k}^{-1}G_{k}(\vec{\mu},\tilde{\phi}),
\end{equation}
for $\norm{\vec{\mu}}_{\infty} = O(|b|)$. Possibly after reducing $b^*(K)>0$ and keeping the same notation, one can verify that the map in \eqref{functional for finite dimension} is a contraction on the ball of radius $O(b)$ in $\mathbb{R}^k$ for each $|b|<b^*(K)$. Therefore, by the Banach Fixed Point Theorem, for each $\tilde{\phi} \in B_{\alpha}$, there exists a unique $\vec{\mu} \in \mathbb{R}^k$ that solves \eqref{eqformu matrix form} with $\|\vec{\mu}\|_{\infty} = O(b)$.

Now, we compute the asymptotic expansion of $\vec{\mu}$. For each $\tilde{\phi} \in B_{\alpha}$, the solution $\vec{\mu}$ obtained from \eqref{eqformu matrix form} satisfies the system of equations:
\begin{align*}
 &  b\langle\tilde{\phi},\Lambda\eta_{j}\rangle_{b} =(\lambda_{k}-\lambda_{j})\mu_{b,jk}+b(C\vec{\mu})_{j}\\
 & \quad -b\langle\Lambda\eta_{k},\eta_{j}\rangle_{0}+\mu_{b,k}\sum_{i=1}^{k-1}\mu_{b,ik}\langle\eta_{i},\eta_{j}\rangle_{b} + O(b^2),
\end{align*}
for $1\leq j\leq k-1$, and for $j=k$ 
\begin{align*}
b\langle\tilde{\phi},\Lambda\eta_{k}\rangle_{b} & =-b\langle\Lambda\eta_{k},\eta_{k}\rangle_{0}+\mu_{b,k}+b(C\vec{\mu})_{k}\\
 & \quad  +\mu_{b,k}\sum_{j=1}^{k-1}\mu_{b,jk}\langle\eta_{k},\eta_{j}\rangle_{b}+O(b^{2}).
\end{align*}
Here, $C$ denotes a $k \times k$ matrix with entries satisfying $C_{ij} = O(1)$.
Since $\langle \tilde{\phi},\Lambda \eta_j \rangle_b = O(b)$ for all $1 \leq j \leq k$, we deduce the asymptotic expansions
\begin{equation}\label{choice of mu}
    \begin{aligned}
        &\mu_{b,k}=b\langle\Lambda\eta_{k},
        \eta_{k}\rangle_{0}+O_{\tilde{\phi}}(b^{2}),\\
        & \mu_{b,jk}=\frac{b}{\lambda_{k}-\lambda_{j}}\langle\Lambda\eta_{k},\eta_{j}\rangle_{0}+O_{\tilde{\phi}}(b^{2}),
    \end{aligned}
\end{equation}
for $1 \leq j \leq k-1$.
Finally, since $\norm{\eta_{k}}_{L_{0}^{2}}=1$, we compute the leading-order coefficient using integration by parts:
\begin{equation}\label{maincoeff of mu_=00007Bb,k=00007D}
\langle\Lambda\eta_{k},\eta_{k}\rangle_{0}=\int_{0}^{1}y\partial_{y}\eta_{k}\eta_{k}y\ dy=\left[\frac{1}{2}\eta_{k}^{2}(y)y^{2}\right]_{0}^{1}-\int_{0}^{1}\eta_{k}^{2}(y)y\ dy=-1.
\end{equation}

\noindent \textbf{2. Existence of $\tilde{\phi}_{b,k}$:} Now we turn to the existence of $\tilde{\phi}_{b,k}$ that solves \eqref{equation for tilde{phi}_{b,k}}. Define a map $F : B_{\alpha} \rightarrow L_b^2$ by
\begin{equation}\label{eq:15}
\begin{aligned}
F(\tilde{\phi}) 
&\coloneqq f(\tilde{\phi}) - \sum_{j=1}^{k} \left\{ M_{b,k}^{-1} \left( \langle f(\tilde{\phi}), \eta_i \rangle_b \right)_{1 \leq i \leq k} \right\}_j \eta_j \\
&= f(\tilde{\phi}) - \sum_{j=1}^{k} b \left( M_{b,k}^{-1} \left( \langle \tilde{\phi}, \Lambda \eta_i \rangle_b \right)_{1 \leq i \leq k}, \Pi_k \right).
\end{aligned}
\end{equation}
We view $\left( \langle \tilde{\phi}, \Lambda \eta_i \rangle_b \right)_{1 \leq i \leq k}$ as a column vector of size $k$ with $i$-th entry $\langle \tilde{\phi}, \Lambda \eta_i \rangle_b$. Hence, the expression
\[
M_{b,k}^{-1} \left( \langle \tilde{\phi}, \Lambda \eta_i \rangle_b \right)_{1 \leq i \leq k}
\]
yields a column vector of length $k$. We denote $(\cdot, \cdot)$ as the standard Euclidean inner product in $\mathbb{R}^k$. Hence, the map $F$ in \eqref{eq:15} represents the $L_b^2$-orthogonal projection of $f(\tilde{\phi})$ onto the orthogonal complement of $\operatorname{span} \{ \eta_1, \eta_2, \dots, \eta_k \}$.
Then, from the compatibility condition \eqref{eqformu}, the equation reduces to find $\tilde{\phi} \in B_{\alpha}$ to solve
\begin{equation}\label{H_{b,k}tilde{phi} = F(tilde{phi})}
    \tilde{H}_{b,k} \tilde{\phi} = F(\tilde{\phi}).
\end{equation}
To find such $\tilde{\phi}$, we consider \eqref{H_{b,k}tilde{phi} = F(tilde{phi})} as a fixed point problem, by considering the functional 
\[
    \tilde{\phi} \mapsto \tilde{H}_{b,k}^{-1}\circ F(\tilde{\phi})
\]
from $B_{\alpha}$ to itself. The invertibility of $\tilde{H}_{b,k}$ follows from Lemma~\ref{lemma:near invertibility}. In fact, since $F(\tilde{\phi}) \in \text{span}\{\eta_1,\cdots,\eta_k\}^{\perp}$ and $\vec{\mu}$ in \eqref{choice of mu} for each $\tilde{\phi} \in B_{\alpha}$, we have 
\begin{align*}
 & \norm{F(\tilde{\phi})}_{L_{b}^{2}}\leq C|b|,\quad \ F(\tilde{\phi})\in L^{\infty}(B_{1}(0))\\
 & \langle F(\tilde{\phi}),\eta_{j}\rangle_{b}=0  \text{ for } 1\leq j\leq k, \text{ and for each } \tilde{\phi}\in B_{\alpha},
\end{align*}
for some universal constant $C>0$ that does not depend on $b$ and $\alpha$. Therefore, from Lemma~\ref{lemma:near invertibility}, $\tilde{H}_{b,k}$ is invertible, and \eqref{estimatesofinvertibility} yields 
\begin{equation}\label{e:3}
\norm{\tilde{H}_{b,k}^{-1}\circ F(\tilde{\phi})}_{H_{b}^{1}}\leq C\norm{F(\tilde{\phi})}_{L_{b}^{2}}.
\end{equation}
for some universal constant $C>0$ independent of $b$. Hence, for large universal constant $\alpha>0$, $(\tilde{H}_{b,k}^{-1}\circ F)(B_{\alpha})\subset B_{\alpha}$.

We now claim that $\tilde{H}_{b,k}^{-1} \circ F : B_{\alpha} \rightarrow B_{\alpha}$ is a contraction, provided that $|b| < b^*(K)$ for a possibly smaller choice of $b^*(K)$. We first show
\[
\norm{f(\phi_{1})-f(\phi_{2})}_{L_{b}^{2}}\lesssim|b|\norm{\phi_{1}-\phi_{2}}_{H_{b}^{1}},
\]
for $\phi_{1},\ \phi_{2}\in B_{\alpha}$. Indeed, we have 
\begin{equation}
\begin{aligned} & f(\phi_{1})-f(\phi_{2})\\
= & -\sum_{j=1}^{k-1}(\mu_{b,jk}(\phi_{1})-\mu_{b,jk}(\phi_{2}))(\lambda_{j}-\lambda_{k})\eta_{j}-b\Lambda\sum_{j=1}^{k-1}(\mu_{b,jk}(\phi_{1})-\mu_{b,jk}(\phi_{2}))\eta_{j}\\
 & +(\mu_{b,k}(\phi_{1})-\mu_{b,k}(\phi_{2}))\eta_{k}+(\mu_{b,k}(\phi_{1})-\mu_{b,k}(\phi_{2}))\phi_{1}+\mu_{b,k}(\phi_{2})(\phi_{1}-\phi_{2})\\
 & +\mu_{b,k}(\phi_{1})\sum_{j=1}^{k-1}\mu_{b,jk}(\phi_{1})\eta_{j}-\mu_{b,k}(\phi_{2})\sum_{j=1}^{k-1}\mu_{b,jk}(\phi_{2})\eta_{j}.
\end{aligned}
\label{e:2}
\end{equation}
Taking the inner product $\langle \cdot, \cdot \rangle_b$ of both sides of \eqref{e:2} with $\eta_j$ for $j = 1, \dots, k$, we estimate the quantities $|\mu_{b,jk}(\phi_1) - \mu_{b,jk}(\phi_2)|$ and $|\mu_{b,k}(\phi_1) - \mu_{b,k}(\phi_2)|$. For some constant $C > 0$ depending on $K$, we obtain
\[
\left| \langle f(\phi_1) - f(\phi_2), \eta_j \rangle_b \right| 
\geq \frac{1}{2} |\mu_{b,jk}(\phi_1) - \mu_{b,jk}(\phi_2)| \cdot |\lambda_j - \lambda_k| 
- C |b| \| \vec{\mu}(\phi_1) - \vec{\mu}(\phi_2) \|_{\infty},
\]
for $1 \leq j \leq k-1$, and for $j = k$,
\[
\left| \langle f(\phi_1) - f(\phi_2), \eta_k \rangle_b \right| 
\geq \frac{1}{2} |\mu_{b,k}(\phi_1) - \mu_{b,k}(\phi_2)| 
- C |b| \| \vec{\mu}(\phi_1) - \vec{\mu}(\phi_2) \|_{\infty}.
\]
Hence, from the compatibility conditions \eqref{eqformu}, we have 
\begin{equation}\label{estimate for difference of phi}
\norm{\vec{\mu}(\phi_{1})-\vec{\mu}(\phi_{2})}_{\infty}\lesssim|b|\norm{\phi_{1}-\phi_{2}}_{L_{b}^{2}}\lesssim|b|\norm{\phi_{1}-\phi_{2}}_{H_{b}^{1}}.
\end{equation}
From \eqref{estimate for difference of phi} and the fact that $F(\tilde{\phi})$ is the $\langle \cdot, \cdot \rangle_b$-orthogonal projection of $f(\tilde{\phi})$ onto the orthogonal complement of $\operatorname{span} \{ \eta_1, \eta_2, \dots, \eta_k \}$, we obtain
\begin{equation*}
\norm{F(\phi_{1}) - F(\phi_{2})}_{L_{b}^{2}} \leq \norm{f(\phi_{1}) - f(\phi_{2})}_{L_{b}^{2}} \lesssim |b| \norm{\phi_{1} - \phi_{2}}_{H_{b}^{1}}.
\end{equation*}
Hence, possibly after further reducing $b^*(K)>0$, we conclude that for all $|b| < b^*(K)$, the map $\tilde{H}_{b,k}^{-1} \circ F$ is a contraction:
\[
\norm{\tilde{H}_{b,k}^{-1} \circ F(\phi_{1}) - \tilde{H}_{b,k}^{-1} \circ F(\phi_{2})}_{H_{b}^{1}} \lesssim |b| \norm{\phi_{1} - \phi_{2}}_{H_{b}^{1}}.
\]
According to the Banach fixed point Theorem, there exists a unique solution $\tilde{\phi}_{b,k} \in B_{\alpha}$ to \eqref{H_{b,k}tilde{phi} = F(tilde{phi})}.

\noindent \textbf{Step 2}: We now prove a spectral gap property for the perturbed operator $\mathcal{H}_b$. Suppose $u\in H_{b}^{1}$ satisfied
\begin{equation*}
\langle u,\psi_{b,j}\rangle_{b}=0 \text{ for } 1\leq j\leq k.
\end{equation*}
Let $v\in H_{0}^{1}(B_{1}(0))$ be defined by 
\begin{align*}
 & v\coloneqq u-\sum_{j=1}^{k}\kappa_{j}\eta_{j},\\
 & \text{ where } \kappa_{j}=\left(M_{b,k}^{-1}(\langle u,\eta_{i}\rangle_{b})_{1\leq i\leq k}\right)_{j}
\end{align*}
Then, by construction, $\langle v,\eta_{j}\rangle_{b}=0$ for $1\leq j\leq k$. Moreover, from \eqref{eq:7} in Appendix~\ref{appendix} and \eqref{eq:12}, we have
\begin{align}
 & \norm{\partial_{y}v}_{L_{b}^{2}}^{2}\geq(\lambda_{k+1}+O(|b|))\norm{v}_{L_{b}^{2}}^{2}\label{spec1}\\
 & 0=\langle u,\psi_{b,j}\rangle_{b}=\langle u,\eta_{j}+\sum_{i=1}^{j-1}\mu_{b,ij}\eta_{i}+\tilde{\phi}_{b,j}\rangle_{b}.\label{orthocondition of u}
\end{align}
Since $\mu_{b,ij}=O(|b|)$ and $\tilde{\phi}_{b,j}\in B_{\alpha}$,
it follows from \eqref{orthocondition of u} and the Cauchy--Schwarz inequality that
\[
|\langle u,\eta_{j}\rangle_{b}|\lesssim|b|\norm{u}_{L_{b}^{2}},
\]
and hence
\[
\norm{u-v}_{H_{b}^{1}}\lesssim|b|\norm{u}_{L_{b}^{2}}.
\]
Also, from \eqref{spec1} we estimate 
\[
\begin{aligned}\norm{\partial_{y}u}_{L_{b}^{2}} & \geq\norm{\partial_{y}v}_{L_{b}^{2}}-\norm{\partial_{y}u-\partial_{y}v}_{L_{b}^{2}}\\
 & \geq\sqrt{\lambda_{k+1}+O(|b|)}\norm{v}_{L_{b}^{2}}-O(|b|)\norm{u}{L_{b}^{2}}\\
 & \geq\sqrt{\lambda_{k+1}+O(|b|)}\norm{u}_{L_{b}^{2}}.
\end{aligned}
\]
This proves \eqref{spectralgap1}. To prove \eqref{minimalityofeigen}
and \eqref{maximumprin}, let 
\[
\sigma(b)=\text{inf}_{u\in H_{b}^{1}}\frac{\norm{\partial_{y}u}_{L_{b}^{2}}^{2}}{\norm{u}_{L_{b}^{2}}^{2}}.
\]
Given a minimizing sequence of functions $\{u_n\}_{n \in \mathbb{N}}$ normalized in $L_b^2$ and bounded in $H_b^1$, the compact embedding of $H_b^1$ into $L_b^2$ implies strong convergence (up to a subsequence). Let $u_n \to \phi_b$ in $L_b^2$. Then, the Lagrange multiplier Theorem implies that
\begin{equation*}
\mathcal{H}_b \phi_b = \sigma(b) \phi_b.
\end{equation*}
Since $|\phi_b|$ also achieves the infimum, we may assume that $\phi_b \geq 0$. Suppose for contradiction that $\sigma(b) \ne \lambda_{b,1}$. It follows from \eqref{self adjointness of H_b} that $\phi_b$ is orthogonal to $\psi_{b,1}$. On the other hand, from the construction of $\psi_{b,1}$, we have
\[
\lambda_{b,1} = \frac{\| \partial_y \psi_{b,1} \|_{L_b^2}^2}{\| \psi_{b,1} \|_{L_b^2}^2}.
\]
Thus, we have $\lambda_{b,1} \geq \sigma(b)$. However, the spectral gap property \eqref{spectralgap1} implies that this leads to a contradiction. Therefore, we conclude that $\sigma(b) = \lambda_{b,1}$. Now, the simplicity of the first eigenvalue $\lambda_{b,1}$, together with a classical argument based on the strong maximum principle, implies that $\psi_{b,1} > 0$ in $|y| < 1$. This establishes \eqref{maximumprin}.

\noindent \textbf{Step 3:} We prove \eqref{value of mu_{b,jk}}, \eqref{value of partial_b mu_{b,jk}} and \eqref{estimates for tilde{phi}_{b,k}}.

\textbf{Computation of $\partial_b \lambda_{b,k}$.}  
We begin by differentiating both sides of \eqref{eq:10} with respect to $b$:
\begin{equation}
\partial_{b}(-\Delta+b\Lambda)\psi_{b,k}=\mathcal{H}_{b}\left(\partial_{b}\psi_{b,k}\right)+\Lambda\psi_{b,k}=(\partial_{b}\lambda_{b,k})\psi_{b,k}+\lambda_{b,k}\partial_{b}\psi_{b,k}.\label{e:5}
\end{equation}
Taking the $\langle\cdot,\cdot\rangle_{b}$-inner product
of \eqref{e:5} with $\psi_{b,k}$ yields
\[
\langle\mathcal{H}_{b}\partial_{b}\psi_{b,k},\psi_{b,k}\rangle_{b}+\langle\Lambda\psi_{b,k},\psi_{b,k}\rangle_{b}=\partial_{b}\lambda_{b,k}\langle\psi_{b,k},\psi_{b,k}\rangle_{b}+\lambda_{b,k}\langle\partial_{b}\psi_{b,k},\psi_{b,k}\rangle_{b}.
\]
From \eqref{self adjointness of H_b} and the estimate from \textbf{Step 1}, we obtain
\[
\| \psi_{b,k} - \eta_k \|_{L_b^2} = O(|b|)
\]
we deduce that
\begin{equation*}
\partial_{b}\lambda_{b,k}=\frac{\langle\Lambda\psi_{b,k},\psi_{b,k}\rangle}{\langle\psi_{b,k},\psi_{b,k}\rangle_{b}}=\frac{\langle\Lambda\eta_{k},\eta_{k}\rangle_{0}}{\langle\eta_{k},\eta_{k}\rangle_{0}}+O(|b|).
\end{equation*}
Now, combining this with \eqref{maincoeff of mu_=00007Bb,k=00007D} and the normalization $\norm{\eta_{k}}_{L_{0}^{2}}=1$
yields \eqref{differentiation of lambda_{b,k} w.r.t b}.

\textbf{Computation of $\partial_{b}\mu_{b,jk}$ and proof of \eqref{estimates for tilde{phi}_{b,k}}:}
Substituting \eqref{eq:12} into \eqref{e:5}, we have 
\begin{equation}\label{a:2}
\begin{aligned} & \mathcal{H}_{b}\left(\partial_{b}T_{b,k}+\partial_{b}\tilde{\phi}_{b,k}\right)+\Lambda T_{b,k}+\Lambda\tilde{\phi}_{b,k}\\
 & =\left(\partial_{b}\lambda_{b,k}\right)T_{b,k}+\left(\partial_{b}\lambda_{b,k}\right)\tilde{\phi}_{b,k}+\lambda_{b,k}\partial_{b}\left(T_{b,k}+\tilde{\phi}_{b,k}\right).
\end{aligned}
\end{equation}
Applying the $\langle \cdot, \cdot \rangle_b$-inner product to \eqref{a:2} against $\eta_j$ for $1\leq j\leq k-1$, we have
\begin{equation}\label{(H_b partial_b psi_{b,k},eta_j)1}
\begin{aligned} & \langle\mathcal{H}_{b}\partial_{b}\psi_{b,k},\eta_{j}\rangle_{b}=-\langle\Lambda T_{b,k},\eta_{j}\rangle_{b}-\langle\Lambda\tilde{\phi}_{b,k},\eta_{j}\rangle_{b}\\
 & +\langle\partial_{b}\left(\lambda_{b,k}T_{b,k}\right),\eta_{j}\rangle_{b}+\partial_{b}\lambda_{b,k}\langle\tilde{\phi}_{b,k},\eta_{j}\rangle_{b}+\lambda_{b,k}\langle\partial_{b}\tilde{\phi}_{b,k},\eta_{j}\rangle_{b}.
\end{aligned}
\end{equation}
We now estimate each term on the right-hand side of \eqref{(H_b partial_b psi_{b,k},eta_j)1}.

\textbf{1.} Estimate for $\lambda_{b,k}\langle\partial_{b}\tilde{\phi}_{b,k},\eta_{j}\rangle_{b}$
: Differentiating the orthogonality condition $\langle\tilde{\phi}_{b,k},\eta_{j}\rangle_{b} = 0$
with respect to $b$ yields 
\begin{align*}
0=\partial_{b}\int_{0}^{1}\tilde{\phi}_{b,k}\eta_{j}\rho_{b}y\ dy=\int_{0}^{1}\left\{ \partial_{b}\tilde{\phi}_{b,k}\eta_{j}-\frac{y^{2}}{2}\tilde{\phi}_{b,k}\eta_{j}\right\} \rho_{b}y\ dy.
\end{align*}
for $1\leq j\leq k$. Therefore, we obtain
\begin{equation}\label{(partial_b tilde{phi}_{b,k},eta_j)=O(b)}
\langle\partial_{b}\tilde{\phi}_{b,k},\eta_{j}\rangle_{b}=O(|b|).
\end{equation}
It then follows from \eqref{(partial_b tilde{phi}_{b,k},eta_j)=O(b)} and \eqref{differentiation of lambda_{b,k} w.r.t b} that
\begin{equation}\label{estimate 1}
\lambda_{b,k}\langle\partial_{b}\tilde{\phi}_{b,k},\eta_{j}\rangle_{b}=O(|b|).
\end{equation}

\textbf{2.} Estimate for $\partial_{b}\lambda_{b,k}\langle\tilde{\phi}_{b,k},\eta_{j}\rangle_{b}$:
It follows from \eqref{differentiation of lambda_{b,k} w.r.t b} and the Cauchy--Schwarz inequality that
\begin{equation}\label{estimate 2}
|\partial_b \lambda_{b,k} \langle \tilde{\phi}_{b,k}, \eta_j \rangle_b| \lesssim \norm{\tilde{\phi}_{b,k}}_{L_b^2} \norm{\eta_j}_{L_b^2} = O(|b|),
\end{equation}
where we used the fact that $\tilde{\phi}_{b,k} \in B_{\alpha}$.

\textbf{3.} Estimate of $\langle\partial_{b}\left(\lambda_{b,k}T_{b,k}\right),\eta_{j}\rangle_{b}$:
Combining \eqref{value of mu_{b,jk}} with the fact that $\tilde{\phi}_{b,k} \in \operatorname{span} 
\{ \eta_1, \dots, \eta_k \}^\perp$, we obtain
\begin{align*}
\langle T_{b,k},\eta_{j}\rangle_{b}=\langle\eta_{k}+\sum_{i=1}^{k-1}\mu_{b,ik}\eta_{i}+\tilde{\phi}_{b,k},\eta_{j}\rangle_{b}=O(|b|).
\end{align*}
Next, taking the $\langle \cdot, \cdot \rangle_b$-inner product of $\partial_b T_{b,k}$ with $\eta_j$ and using \eqref{value of mu_{b,jk}}, we obtain
\begin{align*}
\langle\partial_{b}T_{b,k},\eta_{j}\rangle_{b}=\langle\sum_{i=1}^{k-1}\partial_{b}\mu_{b,ik}\eta_{i}+\partial_{b}\tilde{\phi}_{b,k},\eta_{j}\rangle_{b}=\partial_{b}\mu_{b,jk}+\sum_{i=1}^{k-1}O(|b|)\partial_{b}\mu_{b,ik}+O(|b|).
\end{align*}
Therefore, by \eqref{asymptotic expansion of lambda_{b,k}} and \eqref{differentiation of lambda_{b,k} w.r.t b}, we conclude that
\begin{equation}\label{estimate 3}
\langle\partial_{b}\left(\lambda_{b,k}T_{b,k}\right),\eta_{j}\rangle_{b}=\lambda_{k}\partial_{b}\mu_{b,jk}+\sum_{i=1}^{k-1}O(|b|)\partial_{b}\mu_{b,ik}+O(|b|).
\end{equation}

\textbf{4.} Estimate of $\langle\Lambda\tilde{\phi}_{b,k},\eta_{j}\rangle_{b}$
and $\langle\Lambda T_{b,k},\eta_{j}\rangle_{b}$: Since $\norm{ \tilde{\psi}_{b,k}}_{H_b^1} = O(|b|)$, and using the decomposition \eqref{decompose of psi_{b,k}}, we obtain
\begin{equation}\label{estimate 4}
\langle \Lambda \tilde{\phi}_{b,k}, \eta_j \rangle_b = O(|b|).
\end{equation}
Moreover, from \eqref{value of mu_{b,jk}}, we have
\begin{equation}\label{estimate 5}
\langle \Lambda T_{b,k}, \eta_j \rangle_b = \langle \Lambda \eta_k, \eta_j \rangle_b + O(|b|).
\end{equation}

\textbf{5.} Estimate of $\langle\mathcal{H}_{b}\partial_{b}\psi_{b,k},\eta_{j}\rangle_{b}$:
From \eqref{decompose of psi_{b,k}}, \eqref{value of mu_{b,jk}} and \eqref{(partial_b tilde{phi}_{b,k},eta_j)=O(b)}, we obtain
\begin{equation}\label{(H_b partial_b psi_{b,k},eta_j)2}
\begin{aligned}\langle\mathcal{H}_{b}\partial_{b}\psi_{b,k},\eta_{j}\rangle_{b} & =\langle\partial_{b}T_{b,k}+\partial_{b}\tilde{\phi}_{b,k},\mathcal{H}_{b}\eta_{j}\rangle_{b}\\
 & =\langle\sum_{i=1}^{k-1}\partial_{b}\mu_{b,ik}\eta_{i}+\partial_{b}\tilde{\phi}_{b,k},\lambda_{j}\eta_{j}+b\Lambda\eta_{j}\rangle_{b}\\
 & =\lambda_{j}\partial_{b}\mu_{b,jk}+\sum_{i=1}^{k-1}O(|b|)\partial_{b}\mu_{b,ik}+\norm{\partial_{b}\tilde{\phi}_{b,k}}_{L_{b}^{2}}O(|b|)+O(|b|).
\end{aligned}
\end{equation}
Substituting \eqref{(H_b partial_b psi_{b,k},eta_j)2} into \eqref{(H_b partial_b psi_{b,k},eta_j)1} for $1 \leq j \leq k-1$, and using the estimates \eqref{estimate 1}, \eqref{estimate 2}, \eqref{estimate 3}, \eqref{estimate 4}, and \eqref{estimate 5}, we solve the resulting system of $k - 1$ equations to obtain
\begin{equation}\label{difmujkbefphi}
\partial_{b}\mu_{b,jk}=\frac{\langle\Lambda\eta_{k},\eta_{j}\rangle_{0}}{\lambda_{k}-\lambda_{j}}+O(|b|)\left(1+\norm{\partial_{b}\tilde{\phi}_{b,k}}_{L_{b}^{2}}\right).
\end{equation}
To derive \eqref{value of partial_b mu_{b,jk}}, it remains to show that $\norm{\partial_b\tilde{\psi}_{b,k}}_{L^2_b} =O(1)$. This will follow from the estimate \eqref{estimates for tilde{phi}_{b,k}}. 

To do this, we take the inner product $\langle \cdot, \cdot \rangle_b$ of \eqref{a:2} with $\partial_b \tilde{\phi}_{b,k}$, which yields
\begin{equation}\label{estimates for dif tilde phi}
\begin{aligned}\langle\mathcal{H}_{b}\partial_{b}\tilde{\phi}_{b,k},\partial_{b}\tilde{\phi}_{b,k}\rangle_{b}= & -\langle\mathcal{H}_{b}\partial_{b}T_{b,k},\partial_{b}\tilde{\phi}_{b,k}\rangle_{b}-\langle\Lambda T_{b,k},\partial_{b}\tilde{\phi}_{b,k}\rangle_{b}\\
 & -\langle\Lambda\tilde{\phi}_{b,k},\partial_{b}\tilde{\phi}_{b,k}\rangle_{b}+\langle\partial_{b}\left(\lambda_{b,k}T_{b,k}\right),\partial_{b}\tilde{\phi}_{b,k}\rangle_{b}\\
 & +\partial_{b}\lambda_{b,k}\langle\tilde{\phi}_{b,k},\partial_{b}\tilde{\phi}_{b,k}\rangle_{b}+\lambda_{b,k}\langle\partial_{b}\tilde{\phi}_{b,k},\partial_{b}\tilde{\phi}_{b,k}\rangle_{b}.
\end{aligned}
\end{equation}
We now estimate both sides of \eqref{estimates for dif tilde phi} in order to conclude that
\[
\| \partial_b \tilde{\phi}_{b,k} \|_{H_b^1} = O(1).
\]

\textbf{1.} Estimate of the right-hand side of \eqref{estimates for dif tilde phi}: Since $\tilde{\phi}_{b,k} \in B_{\alpha}$, we have $\tilde{\phi}_{b,k}(1) = 0$, and hence $\partial_b \tilde{\phi}_{b,k}(1) = 0$. Moreover, from the proof of the estimate \eqref{(partial_b tilde{phi}_{b,k},eta_j)=O(b)}, it follows that $\partial_b \tilde{\phi}_{b,k}$ is almost orthogonal to $\eta_j$ with respect to the inner product $\langle \cdot, \cdot \rangle_0$ for all $1 \leq j \leq k$. Therefore, an argument analogous to that used in the proof of \eqref{eq:7} yields
\begin{equation}\label{b:1}
\norm{\partial_{z}\partial_{b}\tilde{\phi}_{b,k}}_{L_{b}^{2}}^{2}\geq\left(\lambda_{k+1}+O(|b|)\right)\norm{\partial_{b}\tilde{\phi}_{b,k}}_{L_{b}^{2}}^{2}.
\end{equation}

\textbf{2.} Estimate of $\langle\mathcal{H}_{b}\partial_{b}T_{b,k},\partial_{b}\tilde{\phi}_{b,k}\rangle_{b}$:
Combining \eqref{decompose of psi_{b,k}}, \eqref{difmujkbefphi}, and \eqref{(partial_b tilde{phi}_{b,k},eta_j)=O(b)}, we obtain
\begin{equation}\label{b:2}
\begin{aligned}
\langle\mathcal{H}_{b}\partial_{b}T_{b,k},\partial_{b}\tilde{\phi}_{b,k}\rangle_{b} & =\langle\mathcal{H}_{b}\left(\sum_{j=1}^{k-1}\partial_{b}\mu_{b,jk}\eta_{j}\right),\partial_{b}\tilde{\phi}_{b,k}\rangle_{b}\\
 & =\langle\sum_{j=1}^{k-1}\lambda_{j}\partial_{b}\mu_{b,jk}\eta_{j}+b\sum_{j=1}^{k-1}\partial_{b}\mu_{b,jk}\Lambda\eta_{j},\partial_{b}\tilde{\phi}_{b,k}\rangle_{b}\\
 & \lesssim \max_{1\leq j\leq k-1}|\partial_{b}\mu_{b,jk}||b|(1+\norm{\partial_{b}\tilde{\phi}_{b,k}}_{L_{b}^{2}})\\
 & \lesssim|b|\left(1+\norm{\partial_{b}\tilde{\phi}_{b,k}}_{L_{b}^{2}}\right)+b^{2}\left(1+\norm{\partial_{b}\tilde{\phi}_{b,k}}_{L_{b}^{2}}\right)^{2}.
\end{aligned}
\end{equation}

\textbf{3.} Estimate of $\langle\Lambda T_{b,k},\partial_{b}\tilde{\phi}_{b,k}\rangle_{b}$:
First, observe that since $|y| \leq 1$, by possibly reducing $b^*(K) > 0$ and retaining the same notation, we have
\begin{equation}\label{value of norm{Lambda eta_k}_{L^2}}
    \norm{\Lambda \eta_k}_{L^2_b} \leq 2\norm{\eta_k}_{L^2_0} = 2\sqrt{\lambda_k}\norm{\eta_k}_{L^2_0} = 2\sqrt{\lambda_k}.
\end{equation}
Combining \eqref{decompose of psi_{b,k}}, \eqref{asymptotic expansion of lambda_{b,k}} and \eqref{value of norm{Lambda eta_k}_{L^2}}, and applying the Cauchy--Schwarz inequality, we obtain
\begin{equation}\label{b:3}
\langle\Lambda T_{b,k},\partial_{b}\tilde{\phi}_{b,k}\rangle_{b}\leq\left(\norm{\Lambda\eta_{k}}_{L_{b}^{2}}+O(|b|)\right)\norm{\partial_{b}\tilde{\phi}_{b,k}}_{L_{b}^{2}}\leq\left(2\sqrt{\lambda_{k}}+O(|b|)\right)\norm{\partial_{b}\tilde{\phi}_{b,k}}_{L_{b}^{2}}.
\end{equation}

\textbf{4.} Estimate of $\langle\Lambda\tilde{\phi}_{b,k},\partial_{b}\tilde{\phi}_{b,k}\rangle_{b}$
: Since $\tilde{\phi}_{b,k} \in B_{\alpha}$, we have $\norm{\tilde{\phi}_{b,k}}_{H_b^1} = O(|b|)$.  
Applying the Cauchy–Schwarz inequality, we obtain
\begin{equation}\label{b:4}
\begin{aligned}
|\langle\Lambda\tilde{\phi}_{b,k},\partial_{b}\tilde{\phi}_{b,k}\rangle_{b}|\leq\norm{\Lambda\tilde{\phi}_{b,k}}_{L_{b}^{2}}\norm{\partial_{b}\tilde{\phi}_{b,k}}_{L_{b}^{2}}\leq\norm{\tilde{\phi}_{b,k}}_{\dot{H}^{1}}\norm{\partial_{b}\tilde{\phi}_{b,k}}_{L_{b}^{2}}=O(|b|)\norm{\partial_{b}\tilde{\phi}_{b,k}}_{L_{b}^{2}},
\end{aligned}
\end{equation}
where the last inequality follows from the fact that $|y|\leq1$.

\textbf{5.} Estimate of $\langle\partial_{b}\left(\lambda_{b,k}T_{b,k}\right),\partial_{b}\tilde{\phi}_{b,k}\rangle_{b}$:
From \eqref{differentiation of lambda_{b,k} w.r.t b}, \eqref{decompose of psi_{b,k}}, \eqref{(partial_b tilde{phi}_{b,k},eta_j)=O(b)}, and \eqref{difmujkbefphi}, we have 
\begin{equation}\label{b:5}
\begin{aligned}
 & |\langle\partial_{b}\left(\lambda_{b,k}T_{b,k}\right),\partial_{b}\tilde{\phi}_{b,k}\rangle_{b}|\\
 & \leq\left|(-1+O(|b|))\langle\eta_{k}+\sum_{j=1}^{k-1}\mu_{b,jk}\eta_{j},\partial_{b}\tilde{\phi}_{b,k}\rangle_{b}\right|+\left|(\lambda_{k}+O(|b|))\langle\sum_{j=1}^{k-1}\partial_{b}\mu_{b,jk}\eta_{j},\partial_{b}\tilde{\phi}_{b,k}\rangle_{b}\right|\\
 & \lesssim|b|\left(1+\max_{1\leq j\leq k-1}|\partial_{b}\mu_{b,jk}|\right)\\
 & \lesssim|b|+b^{2}\left(1+\norm{\tilde{\phi}_{b,k}}_{L_{b}^{2}}^{2}\right).
\end{aligned}
\end{equation}

\textbf{6.} Estimate of $\partial_{b}\lambda_{b,k}\langle\tilde{\phi}_{b,k},\partial_{b}\tilde{\phi}_{b,k}\rangle_{b}$
and $\lambda_{b,k}\langle\partial_{b}\tilde{\phi}_{b,k},\partial_{b}\tilde{\phi}_{b,k}\rangle_{b}$:
Using \eqref{asymptotic expansion of lambda_{b,k}} and \eqref{differentiation of lambda_{b,k} w.r.t b}, together with the estimate $\| \tilde{\phi}_{b,k} \|_{L_b^2} = O(|b|)$, we deduce that
\begin{equation}\label{b:6}
|\partial_{b}\lambda_{b,k}\langle\tilde{\phi}_{b,k},\partial_{b}\tilde{\phi}_{b,k}\rangle_{b}|\lesssim|b|\norm{\partial_{b}\tilde{\phi}_{b,k}}_{L_{b}^{2}},
\end{equation}
and 
\begin{equation}\label{b:7}
|\lambda_{b,k}\langle\partial_{b}\tilde{\phi}_{b,k},\partial_{b}\tilde{\phi}_{b,k}\rangle_{b}|\leq\left(\lambda_{k}+O(|b|)\right)\norm{\tilde{\phi}_{b,k}}_{L_{b}^{2}}.
\end{equation}
Finally, by collecting the estimates \eqref{b:1}–\eqref{b:7}, we obtain
\[
\begin{aligned}(\lambda_{k+1}+O(|b|)) & \norm{\partial_{b}\tilde{\phi}_{b,k}}_{L_{b}^{2}}^{2}\leq\norm{\partial_{y}\partial_{b}\tilde{\phi}_{b,k}}_{L_{b}^{2}}\\
 & \leq O(|b|)\left(1+\norm{\partial_{b}\tilde{\phi}_{b,k}}_{L_{b}^{2}}\right)+O(b^{2})\left(1+\norm{\partial_{b}\tilde{\phi}_{b,k}}_{L_{b}^{2}}\right)^{2}\\
 & \quad +(2\sqrt{\lambda_{k}}+O(|b|))\norm{\partial_{b}\tilde{\phi}_{b,k}}_{L_{b}^{2}}+O(|b|)\norm{\partial_{b}\tilde{\phi}_{b,k}}_{L_{b}^{2}}+O(|b|)\\
 & \quad +O(b^{2})\left(1+\norm{\partial_{b}\tilde{\phi}_{b,k}}_{L_{b}^{2}}^{2}\right)+(\lambda_{k}+O(|b|))\norm{\partial_{b}\tilde{\phi}_{b,k}}_{L_{b}^{2}}^{2}\\
 & =(\lambda_{k}+O(|b|))\norm{\partial_{b}\tilde{\phi}_{b,k}}_{L_{b}^{2}}^{2}+\left(2\sqrt{\lambda_{k}}+O(|b|)\right)\norm{\partial_{b}\tilde{\phi}_{b,k}}_{L_{b}^{2}}+O(|b|).
\end{aligned}
\]
Then, possibly reducing $b^*(K) > 0$ and retaining the same notation, applying \eqref{increment}, we obtain the following result.
\[
\norm{\partial_{b}\tilde{\phi}_{b,k}}_{L_{b}^{2}} = O(1) \quad \text{and hence} \quad \norm{\partial_{y}\partial_{b}\tilde{\phi}_{b,k}}_{L_{b}^{2}} = O(1).
\]
Therefore, we conclude that
\begin{equation} \label{ineq221:1}
\norm{b\, \partial_{b}\tilde{\phi}_{b,k}}_{H_{b}^{1}} = O(|b|).
\end{equation}

On the other hand, in view of \eqref{f(phi)} and the fact that we chose $\vec{\mu}$ such that $\norm{\vec{\mu}}_{\infty} = O(b)$, it follows that $\norm{f(\phi)}_{H_b^2} = O(b)$. Moreover, by standard elliptic regularity theory, we have $\tilde{\phi}_{b,k} \in H^2(B_1(0))$. Therefore,
\begin{equation}\label{ineq221:2}
\begin{aligned}
\norm{\mathcal{H}_{b}\tilde{\phi}_{b,k}}_{L_{b}^{2}} &= O(|b|), \\
\norm{\Delta\tilde{\phi}_{b,k}}_{L_{b}^{2}} &= O(|b|).
\end{aligned}
\end{equation}

\textbf{7.} Estimate of $|\partial_{y}\tilde{\phi}_{b,k}(1)|$: Integration
by parts yields 
\begin{align*}
\langle\mathcal{H}_{b}\tilde{\phi}_{b,k},y\rangle_{b} & =\int_{0}^{1}-\frac{1}{y\rho_{b}}\partial_{y}\left(y\rho_{b}\partial_{y}\tilde{\phi}_{b,k}\right)y\rho_{b}y\ dy\\
 & =-\left[y^{2}\rho_{b}(y)\partial_{y}\tilde{\phi}_{b,k}(y)\right]_{0}^{1}+\int_{0}^{1}\partial_{y}\tilde{\phi}_{b,k}\rho_{b}y\ dy.
\end{align*}
From \eqref{ineq221:1}, \eqref{ineq221:2} and the Cauchy--Schwarz inequality give \eqref{ineq221:1}, 
\begin{equation}
|\partial_{y}\tilde{\phi}_{b,k}(1)|=O(|b|).\label{bdyestimates}
\end{equation}
Therefore, combining \eqref{ineq221:1}, \eqref{ineq221:2}, \eqref{bdyestimates},
and the fact that $\tilde{\phi}_{b,k}\in B_{\alpha}$, we conclude the estimate \eqref{estimates for tilde{phi}_{b,k}}. In addition, this yields \eqref{value of partial_b mu_{b,jk}} by combining with \eqref{diffofpsiwrtb}.

\textbf{Step 5} : Proof of \eqref{Norm of psi} - \eqref{Norm of partial _b psi} and \eqref{IVTesitmates}.

\textbf{1.} Estimates for $\norm{\psi_{b,k}}_{L_{b}^{2}}$: Since $\tilde{\phi}_{b,k}\in B_{\alpha}$
and $\norm{\eta_{k}}_{L_{b}^{2}}=1+O(|b|)$, we have 
\begin{align*}
\norm{\psi_{b,k}}_{L_{b}^{2}}^{2} & =\langle\eta_{k}+\sum_{j=1}^{k-1}\mu_{b,jk}\eta_{j}+\tilde{\phi}_{b,k},\eta_{k}+\sum_{j=1}^{k-1}\mu_{b,jk}\eta_{j}+\tilde{\phi}_{b,k}\rangle_{b}=\norm{\eta_{k}}_{L_{b}^{2}}^{2}+O(|b|)\\
 & =\norm{\eta_{k}}_{L_{0}^{2}}^{2}=1+O(|b|).
\end{align*}

\textbf{2.} Estimate of $\norm{b\partial_{b}\psi_{b,k}}_{L_{b}^{2}}$:
From \eqref{decompose of psi_{b,k}}, \eqref{value of partial_b mu_{b,jk}}, and \eqref{estimates for tilde{phi}_{b,k}}, the derivative of $\psi_{b,k}$ with respect to $b$ is given by
\begin{equation}\label{diffofpsiwrtb}
b\partial_{b}\psi_{b,k}=\sum_{j=1}^{k-1}b\partial_{b}\mu_{b,jk}\eta_{j}+b\partial_{b}\tilde{\phi}_{b,k},
\end{equation}
Consequently, we have
\[
\norm{b\partial_{b}\psi_{b,k}}_{H_{b}^{1}}=O(|b|).
\]

\textbf{3.} Estimate of $\langle b\partial_{b}\psi_{b,i},\psi_{b,j}\rangle_{b}$
where $1\leq j,i\leq k$: From \eqref{IVTesitmates}, \eqref{Norm of partial _b psi}, and the Cauchy--Schwarz inequality, we obtain  
\begin{align*}
\langle b\,\partial_{b}\psi_{b,i},\,\psi_{b,j} \rangle_{b} 
&\leq \| b\,\partial_{b}\psi_{b,i} \|_{L_{b}^{2}} \cdot \| \psi_{b,j} \|_{L_{b}^{2}} \\
&= O(|b|),
\end{align*}
for $j < i$. On the other hand, for $i \geq j$, using \eqref{diffofpsiwrtb}, \eqref{value of partial_b mu_{b,jk}}, and the estimate $\langle \eta_j, \eta_k \rangle_b = O(|b|)$ for $1 \leq j < k$, we deduce
\begin{align*}
\langle b\,\partial_{b} \psi_{b,j},\, \psi_{b,k} \rangle_{b}
&= \langle \sum_{s=1}^{j-1} b\,\partial_{b} \mu_{b,sj} \eta_s + b\,\partial_{b} \tilde{\phi}_{b,j},\, \eta_k + \sum_{s=1}^{k-1} \mu_{b,sk} \eta_s + \tilde{\phi}_{b,k} \rangle_{b} \\
&= O(b^2).
\end{align*}
\end{proof}

\begin{remark}
The spectral gap property \eqref{spectralgap1}, along with the minimality of the first eigenvalue \eqref{minimalityofeigen} (which follows from the maximum principle), excludes the existence of eigenfunctions exhibiting logarithmic singularities (e.g., $\log r$) in the limit $b \to 0$. This prevents the corresponding eigenvalues $\lambda_{b,k}$ from vanishing, ensuring that $\lambda_{b,k} \to \lambda_k > 0$, for each $k \in \bbN$. Such spectral non-degeneracy plays a crucial role in the derivation of a non-degenerate modulation law for $b_s$, thus ruling out the regime in which the domain collapses to a point in finite time. We also note that, for $k=1$, the corresponding eigenfunction $\psi_{b,1}$ is strictly positive in $B_1(0)$. This implies that the initial data we construct for \eqref{stefanradial}, which corresponds to the stable regime, are sign-definite.
\end{remark}

\section{Infinite time freezing and melting regime}
In this section, we start the modulation analysis for the radial Stefan problem \eqref{stefanradial}. We consider a class of initial data for which the analysis can be carried out and introduce a nonlinear decomposition of the solution together with bootstrap assumptions on key modulation parameters and energy norms that will be propagated in the subsequent argument. As in \eqref{renormalised solution} and \eqref{renormalised stefan}, we begin by renormalizing the solution and the equation. Let $a$ denote a small parameter that captures the precise evolution of the scaling factor $\lambda(s)$. We define the renormalized variables by
\begin{equation*}
u(t,r) \coloneqq v(s,y)\big|_{t = t(s)}, \quad y \coloneqq \frac{r}{\lambda(t)}, \quad s(t) \coloneqq \int_0^t \frac{1}{\lambda(\tau)^2} \, d\tau,
\end{equation*}
so that the original problem \eqref{stefanradial} transforms into the following non-autonomous equation for $v(s,y)$:
\begin{equation*}
\begin{cases}
\partial_s v + \mathcal{H}_a v = 0, \quad a = -\frac{\lambda_s}{\lambda}, \\
v(s,1) = 0, \quad \partial_y v(s,1) = a.
\end{cases}
\end{equation*}
\subsection{Set up profiles and bootstrap assumptions}\label{profile set up and bootstrap assumptions}
We divide the cases into the stable case $k=1$ and the unstable case $k>1$, depending on the nature of the dynamics. For each $k \in \mathbb{N}$, we introduce the length $k$ vector $\beta_k$ to be
\begin{equation*}
    \beta_k \coloneqq (b_1,\cdots,b_k),
\end{equation*} where $b_k$ is the parameter that we track.

We now consider the decomposition of the solution $v$ to \eqref{renormalised stefan} as
\[
    v(s,y) = Q_{\beta_k(s)}(y) + \epsilon(s,y),
\]
where the leading profile $Q_{\beta_k(s)}$ is defined by
\begin{equation}\label{profile Q_{beta_k}}
    Q_{\beta_k(s)}(y) \coloneqq
    \begin{cases}
        b_1(s)\, \psi_{b_1(s),1}(y), & \text{if } k = 1, \\
        \sum_{j=1}^{k} b_j(s)\, \psi_{b(s),j}(y), & \text{if } k > 1.
    \end{cases}
\end{equation}
In the case $k = 1$, we simply set the modulation parameter $ b_1$ equal to $b$, which also appears as the parameter in the linearized operator $\mathcal{H}_b$. However, when $k > 1$, we introduce a fixed adiabatic parameter $b(s)$, chosen slightly smaller than the actual modulation parameter $b_k(s)$ as specified in \eqref{hyperparameter}, for a technical reason. This distinction is essential to define a consistent family of eigenfunctions $\{ \psi_{b,j} \}_{j=1}^k$ with smooth dependence on $b$, where each $\psi_{b,j}$ is constructed in Proposition~\ref{prop:1} as an eigenfunction of the operator $\mathcal{H}_b$. To avoid notational confusion, we emphasize that for $k > 1$, the parameter $b$ is fixed throughout the construction of the basis and appears in the spectral analysis, while $b_k$ governs the actual modulation dynamics.

However, to uniquely determine the modulation parameters $\{b_j\}_{1\leq j\leq k}$ associated with a given solution, we now introduce a set of orthogonality conditions for the perturbation $\epsilon$.
\[
\langle \epsilon(s), \psi_{b,j} \rangle_b = 0 \quad \text{for all } 1 \leq j \leq k.
\]
After fixing the modulation parameters, these orthogonal decompositions play an additional role: they ensure that the error term $\epsilon$ lies in the subspace of $\mathcal{H}_b$  where a spectral gap holds. The resulting coercivity of the linearized operator in this subspace will be a key ingredient in establishing energy estimates and closing the bootstrap argument on the smallness of $\epsilon$. To construct such a decomposition, we apply the Implicit Function Theorem so that the orthogonality conditions are satisfied.

\textbf{Case $k=1$.} 
We claim that for the constant smaller than $b^*(K)$ which we choose in Proposition~\ref{prop:1}, we still denote $b^*(K)$, for $0<|\wt b|<b^*(K)$
and for small $\norm{\wt \epsilon}_{H_{\wt b^{1}}}$, there exists
a unique decomposition 
\[
\wt b\psi_{\wt b,1}+\wt\epsilon=b_1\psi_{b_1,1}+\epsilon \text{ with } \langle\epsilon,\psi_{b_1,1}\rangle_{b_1}=0,
\]
for small $|b_1|$, and $\norm{\epsilon}_{H_{b}^{1}}$. In fact, let
$F$ be a map such that 
\[
F(b_1,\wt\epsilon)=\langle\wt b_1\psi_{\wt b_1,1}-b_1\psi_{b_1,1}+\wt\epsilon,\psi_{b_1,1}\rangle_{b_1}.
\]
Then, from the estimates for the Frechet derivative of $\psi_{b,1}$ in \eqref{IVTesitmates} replacing $b$ with $b_1$, the
derivative of $F$ with respect to $b_1$ at $(\wt b,0)$ is 
\[
\partial_{b_1}F(\wt b,0)=-\langle\psi_{\wt b,1}+\wt b\partial_{b_1}\psi_{b_1,1}|_{b_1=\wt b},\psi_{\wt b,1}\rangle_{\wt b}=-1+O(|\wt b|)<0,
\]
and $F(\hat{b},0)=0$. Therefore, the Implicit Function
Theorem yields that by choosing initial data of equation \eqref{stefanradial}
$v_{0}$ and $\epsilon_{0}$ to be 
\[
v_{0}=b_{0}\psi_{b_{0},1}+\epsilon_{0},
\]
where, $|b_{0}|$ and $\norm{\epsilon_0}_{H_{b_{0}}^{1}}$ sufficiently
small, then we can decompose the solution for \eqref{stefanradial} whose initial data is $v_{0}$
to be 
\begin{equation}
v(s,y)=b_1(s)\psi_{b_1(s),1}+\epsilon(s,y), \text{ where } \langle\epsilon,\psi_{b_1,1}\rangle_{b_1}=0,\label{profile set up k=1}
\end{equation}
and 
\[
b_0\psi_{b_0,1}+\epsilon_0=b_1(0)\psi_{b_1(0),1}+\epsilon(0).
\]

\noindent \textbf{Case $K \geq k>1$.} 
For some constant smaller than $b^*(K)$ which is chosen in Proposition~\ref{prop:1} possibly smaller, we claim that for $|b|,|\wt b_j|<b^*(K)$, and small $\norm{\wt\epsilon}_{H^1_b}$ we have a decomposition
\begin{equation*}
    \sum_{j=1}^k \wt b_j \psi_{b,j} + \wt\epsilon = \sum_{j=1}^k b_j \psi_{b,j} + \epsilon,
\end{equation*} where $\langle \psi_{b,j},\epsilon \rangle_b =0$ for some $b_j$ and $\norm{\epsilon}_{H^1_b}$. 
Indeed, as similar to case $k=1$, we define
a function $\overrightarrow{F}$ to be 
\[
\overrightarrow{F}(b_{1},b_{2},\cdots,b_{k},\wt \epsilon) \coloneqq \left(\langle \sum_{j=1}^k \wt b_j \psi_{b,j}-\sum_{j=1}^{k}b_{j}\psi_{b,j}+\wt \epsilon,\psi_{b,j}\rangle_{b}\right)_{1\leq j\leq k}.
\]
Then, $\overrightarrow{F}(\wt b_{1},\wt b_{2},\cdots,\wt b_{k},0)=0$ and 
derivative of $\overrightarrow{F}$ with respect to $(b_{j})_{1\leq j\leq k}$ at $(\wt b_{1},\wt b_{2},\cdots,\wt b_{k},0)$ is 
\[
(\partial_{b_{j}}\overrightarrow{F})_{1\leq j\leq k}(\wt b_{1},\wt b_{2},\cdots,\wt b_{k},0)=-\text{diag}\left(\langle\psi_{b,j},\psi_{b,j}\rangle_{b}\right)_{1\leq j\leq k}=-I_{k}+O(|b|).
\]
Hence, Implicit Function Theorem yields the decomposition. Now, for a technical reason for closing the modulation estimates and closing the bootstrap argument (see Remark~\ref{motivation for introduce b(s)}), we choose an adiabatic parameter $b : [0,\infty) \rightarrow \mathbb{R}$ such that 
\begin{equation}
b(s)\coloneqq A_{k}\frac{e^{-\lambda_{k}s}}{s+1},\label{hyperparameter}
\end{equation}
for $0<A_{k}<b^*(K)$ so that $b(s)$ sufficiently small for $s \in [0,\infty)$. Let $v_{0}$ be 
\[
v_{0}=\sum_{j=1}^{k}\wt b_{j}\psi_{b(0),j}+\wt \epsilon_0.
\]
Then, the solution of \eqref{stefanradial} with initial data is $v_0$ and can be decomposed as
\begin{equation}
v(s,y)=Q_{\beta_k(s)}(y)+\epsilon(s,y),\quad \langle\psi_{b,j},\epsilon\rangle_{b}=0 \text{ for } 1\leq j\leq k.\label{profile set up k>1}
\end{equation}

We now state the bootstrap hypothesis. We first define the second order energy: For $k=1$,
\begin{equation}\label{define energy for k=1}
\epsilon_{2}\coloneqq \mathcal{H}_{b_1}\epsilon,\quad \mathcal{E} \coloneqq \norm{\epsilon_{2}}_{L_{b_1}^{2}}^{2},
\end{equation}
and for $k>1$
\begin{equation}\label{define energy for k>1}
\epsilon_{2} \coloneqq \mathcal{H}_{b}\epsilon,\quad \mathcal{E}\coloneqq \norm{\epsilon_{2}}_{L_{b}^{2}}^{2}.
\end{equation}
The control of $\partial_y \varepsilon$ at the boundary $y = 1$ cannot be achieved solely with $H_b^1$-bounds. Through an integration by parts argument, we see that pointwise estimates near the boundary require $ H^2 $-Sobolev regularity. This justifies the inclusion of $H^2$-control in our bootstrap assumptions (see Lemma~\ref{boundarylemma}). We are now ready to state the main bootstrap proposition.

\begin{proposition}
\label{bootstrap assumptions}(Bootstrap proposition for $b_{j}$ 
and $\mathcal{E}$) Let $b^{*}=b^{*}(K)$ be as in (decomposition)
and $\epsilon$ and $\mathcal{E}$ as above. 

\textbf{Case $k=1$.} With replacing $b^{*}(1)$ by a possibly smaller parameter,
still called $b^{*}(1)$, we have the following. If initial conditions
on $b$ and $\mathcal{E}$ satisfy 
\begin{equation}
\begin{cases}
0<|b_1(0)|<b^{*},\\
\mathcal{E}(0)\leq|b_1(0)|^{3},
\end{cases}\label{initial conditions for k=1}
\end{equation}
then we have for all $s\in[0,\infty)$ 
\begin{equation}
\begin{aligned} & 0<|b_1(s)|<3b^{*},\\
 & \mathcal{E}(s)\leq C_{1}|b_1(s)|^{3}.
\end{aligned}
\label{bootstrap assumption for k=1}
\end{equation} for some positive constant $C_1$.

\textbf{Case $k>1$.} From here, let $A_k = b^*(k)/2$, where $A_k$ is the constant depending on $k > 1$ in \eqref{hyperparameter}. By possibly choosing a smaller value for $b^*(k)$ (while keeping the same notation), we may assume the following.
 If the initial conditions on $b_k$ and $\mathcal{E}$ satisfy
\begin{equation}
\begin{cases}
b(0)<|b_{k}(0)|<b^*,\\
\mathcal{E}(0)\leq b^{2}(0).
\end{cases}\label{initial conditions for k>1}
\end{equation}
then there exist a constant $C_k, D_k>0$ and $(b_j(0))_{1\leq j\leq k-1}$,
\begin{equation*}
    |b_{j}(0)|<b^{*}\text{ for } 1\leq j\leq k-1,
\end{equation*}
depending on $\epsilon(0)$ such that for all $s \in [0,\infty)$,
\begin{equation}
\begin{aligned} & 0<b(s)<|b_{k}(s)|<3b^{*},\quad \sum_{j=1}^{k-1}|V_{j}(s)|^{2}\leq D_k^{2},\\
 & \text{and } \mathcal{E}(s)\leq C_{k}b^{2}(s),
\end{aligned}
\label{bootstrap assumption for k>1}
\end{equation}
where $(V_{j})_{1\leq j\leq k-1}$ is a sequence of functions denoted by
\begin{equation}\label{def of V_j}
V_{j}(s) \coloneqq b_{j}(s)e^{(\lambda_{k}+\eta_k)s},
\end{equation}  
where $0<\eta_k<(\lambda_{k}-\lambda_{k-1})/2$ is a small constant depending on $k$.
\end{proposition}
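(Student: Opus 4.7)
My plan is to combine three ingredients: (i) modulation equations obtained by projecting the renormalized equation $\partial_s v+\mathcal{H}_a v=0$ against the eigenmodes $\psi_{b,j}$, (ii) an $H^2$-type energy estimate for $\mathcal{E}=\norm{\mathcal{H}_b\epsilon}_{L^2_b}^2$ driven by the spectral gap \eqref{spectralgap1}, and (iii) a direct continuity argument in the stable case $k=1$, replaced by a Brouwer-type shooting argument for $k>1$ to select the unstable components $(b_j(0))_{1\le j\le k-1}$ of the initial data.

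\textbf{Modulation equations and energy estimate.} From the decomposition \eqref{profile set up k=1}--\eqref{profile set up k>1} and the orthogonality $\langle\epsilon,\psi_{b,j}\rangle_b=0$, testing the equation for $v$ against $\psi_{b,j}$ and using the expansions \eqref{asymptotic expansion of lambda_{b,k}}--\eqref{estimates for tilde{phi}_{b,k}} yields the modulation ODEs (Proposition~\ref{Modulation equation proposition}): for $k=1$ the leading part is $b_s+\lambda_1 b+\sqrt{2\lambda_1}\,b^2=O(|b|\mathcal{E}^{1/2})$, and for $k>1$ the equation for $b_k$ has the same structure while, for $1\le j<k$, one obtains $(b_j)_s+\lambda_j b_j = -a b_k\,\frac{\langle\Lambda\psi_{b,k},\psi_{b,j}\rangle_b}{\langle\psi_{b,j},\psi_{b,j}\rangle_b}+(\text{errors})$. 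The error $\epsilon$ satisfies $\partial_s\epsilon+\mathcal{H}_b\epsilon+(a-b)\Lambda\epsilon=R$; differentiating $\mathcal{E}$ in $s$ and invoking the spectral gap on the orthogonal complement of $\{\psi_{b,j}\}_{j\le k}$ gives a coercive differential inequality of the form $\tfrac{1}{2}\tfrac{d}{ds}\mathcal{E}+\lambda_{k+1}\mathcal{E}\le C\big(|b|\mathcal{E}+\norm{R}_{L^2_b}\mathcal{E}^{1/2}+\mathcal{B}\big)$, where the boundary contribution $\mathcal{B}$ at $y=1$ is controlled via Lemma~\ref{boundarylemma} together with the divergence form \eqref{self adjointness of H_b}.

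\textbf{Closing the bootstrap.} For $k=1$, the modulation ODE forces $b_1(s)\sim b_1(0)e^{-\lambda_1 s}$, in particular $|b_1(s)|\le|b_1(0)|<b^*$, a strict improvement of $|b_1(s)|<3b^*$; feeding this decay into the differential inequality for $\mathcal{E}$ and integrating against the multiplier $e^{3\lambda_1 s}$ yields $\mathcal{E}(s)\le\tfrac{C_1}{2}|b_1(s)|^3$, provided $C_1$ is chosen large enough, and a standard open/closed continuity argument extends the bounds to $[0,\infty)$. For $k>1$, the same analysis strictly improves the bounds on $|b_k|$ and $\mathcal{E}$. The genuinely unstable directions are the $V_j(s)=b_j(s)e^{(\lambda_k+\eta_k)s}$ for $1\le j<k$, whose linearization $(V_j)_s=(\lambda_k+\eta_k-\lambda_j)V_j+(\text{l.o.t.})$ is strictly expanding since $\lambda_k+\eta_k-\lambda_j>0$. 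To exhibit admissible $(b_j(0))$, I would run a topological shooting argument: parameterize initial data $V(0)$ over the closed ball $\{|V|\le D_k\}\subset\R^{k-1}$, let $s^*(V(0))$ be the first exit time from this ball, and observe that strict improvement of the $|b_k|$ and $\mathcal{E}$ bounds forces any exit to occur through the $V$-sphere; if no admissible $V(0)$ existed, the map $V(0)\mapsto V(s^*)/|V(s^*)|$ would then define a continuous retraction of $\{|V|\le D_k\}$ onto $S^{k-2}$, contradicting Brouwer's no-retraction theorem.

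\textbf{Main obstacle.} The principal difficulty is the self-consistent closure of the energy/boundary estimate. Both the modulation remainder $R$ and the quantity $a-b$ involve the boundary trace $\partial_y\epsilon(s,1)$ through the free boundary condition $\partial_y v(s,1)=a$, so the forcing on the right-hand side of the energy inequality depends implicitly on $\mathcal{E}^{1/2}$ with a constant that must be shown to be strictly smaller than $\lambda_{k+1}$. Controlling this boundary trace is precisely what forces the analysis to be conducted at the $H^2_b$ level (motivating the choice \eqref{define energy for k=1}--\eqref{define energy for k>1}), invoking Lemma~\ref{boundarylemma}. A secondary technicality in the $k>1$ case is that $\psi_{b(s),j}$ depends on the adiabatic parameter $b(s)$, contributing a term $\dot b\,\partial_b\psi_{b,j}$ in the modulation system that must be dominated; this is handled using \eqref{Norm of partial _b psi} and \eqref{IVTesitmates}, and is the very reason for separating $b$ from $b_k$ in the profile (cf.\ Remark~\ref{motivation for introduce b(s)}).
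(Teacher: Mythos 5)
Your overall architecture — modulation ODEs from projecting onto the $\psi_{b,j}$, an $H^2_b$-level energy inequality driven by the spectral gap with the boundary trace $\partial_y\epsilon(s,1)$ handled through the divergence form \eqref{self adjointness of H_b}, a direct continuity argument for $k=1$, and a Brouwer no-retraction shooting argument with outgoing transversality for the unstable modes $V_j$ when $k>1$ — is exactly the paper's. Your $k=1$ closure with the multiplier $e^{3\lambda_1 s}$ is correct, and it implicitly uses the arithmetic fact $3\lambda_1<2\lambda_2$, which is what makes the forcing $|b_1|^{7/2}\sim e^{-\frac{7}{2}\lambda_1 s}$ integrable against the coercivity rate $e^{2\lambda_2 s}$ and returns $\mathcal{E}(s)\lesssim|b_1(s)|^3$.

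The genuine gap is the sentence ``For $k>1$, the same analysis strictly improves the bounds on $|b_k|$ and $\mathcal{E}$.'' It does not: for sufficiently large $k$ one has $3\lambda_k>2\lambda_{k+1}$, so the solution of $\tfrac12\mathcal{E}'+\lambda_{k+1}\mathcal{E}\lesssim|b_k|^3\sim e^{-3\lambda_k s}$ is only guaranteed to decay like $e^{-2\lambda_{k+1}s}$, which is \emph{not} $O(|b_k(s)|^3)$; the multiplier trick that closes $k=1$ fails. This is precisely why the proposition's bootstrap target for $k>1$ is the weaker bound $\mathcal{E}(s)\le C_k b^2(s)$ with the auxiliary parameter $b(s)=A_k e^{-\lambda_k s}/(s+1)$ of \eqref{hyperparameter}: after integration the homogeneous contribution $e^{-2\lambda_{k+1}s}$ is absorbed into $b^2(s)\sim e^{-2\lambda_k s}/(s+1)^2$ using the eigenvalue gap $\lambda_{k+1}-\lambda_k>1$ from \eqref{increment}. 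You instead attribute the introduction of $b(s)$ to the term $\dot b\,\partial_b\psi_{b,j}$; per Remark~\ref{motivation for introduce b(s)} its actual purpose is to relax the energy target so that the integration closes, and without this adjustment your $k>1$ energy bootstrap does not close. The remaining ingredients of your sketch (decay of $b_k$, the expanding $V_j$ system, continuity of the exit time, the retraction contradiction) match the paper's proof.
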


To prove Proposition~\ref{bootstrap assumptions}, we proceed via a continuity argument. Accordingly, starting from the next subsection, we assume the bootstrap bounds \eqref{bootstrap assumption for k=1} for $k=1$ and \eqref{bootstrap assumption for k>1} for $k>1$, respectively.  Closing the continuity argument is in Section~\ref{Energy bounds and closing bootstrap arguments}.

\subsection{Derivations of leading order ODE and modulation estimates} In this section, we derive the modulation equation to yield leading dynamics. We begin by estimating the parameter $a$.
\begin{lemma}(Boundary conditions)\label{boundarylemma} We have
\begin{equation}
a=-\frac{\lambda_s}{\lambda}=\begin{cases}
-\sqrt{2\lambda_{1}}b_1+O(|b_1|^{3/2}),\quad k=1,\\
\\
(-1)^{k}\sqrt{2\lambda_{k}}b_{k}+O(|b|),\quad k>1.
\end{cases}\label{boundaryconditions}
\end{equation}
\end{lemma}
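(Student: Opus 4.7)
The plan is to read $a$ directly from the renormalized free-boundary condition $\partial_y v(s,1) = a$ in \eqref{renormalised stefan}. Substituting the profile decomposition \eqref{profile set up k=1} or \eqref{profile set up k>1} yields
\[
a \;=\; \sum_{j=1}^{k} b_j\, \partial_y \psi_{b,j}(1) \;+\; \partial_y \epsilon(s,1),
\]
where $b$ is understood as $b_1$ when $k=1$. Thus the argument reduces to two tasks: compute $\partial_y\psi_{b,j}(1)$ to leading order, and show that $\partial_y\epsilon(s,1)$ is strictly subleading under the bootstrap bounds.

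For the first task, the decomposition \eqref{decompose of psi_{b,k}}, the bound $\mu_{b,jk} = O(|b|)$ from \eqref{value of mu_{b,jk}}, and the boundary estimate $|\partial_y \tilde\phi_{b,k}(1)| \lesssim |b|$ from \eqref{estimates for tilde{phi}_{b,k}} give $\partial_y \psi_{b,k}(1) = \partial_y \eta_k(1) + O(|b|)$. From the explicit form \eqref{renormalization of unperturbed eigenfunctions}, a direct differentiation yields $\partial_y \eta_k(1) = \sqrt{2\lambda_k}\,\sgn(J_0'(\sqrt{\lambda_k}))$. Since $J_0(0) = 1$ and the positive zeros $\{r_j\}$ of $J_0$ are simple with strictly alternating signs of $J_0'$ (with $J_0'(r_1) < 0$, as $J_0$ decreases from $1$ to its first zero), I obtain $\sgn(J_0'(\sqrt{\lambda_k})) = (-1)^k$ and hence $\partial_y\eta_k(1) = (-1)^k\sqrt{2\lambda_k}$. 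This already gives, in the $k=1$ case, the leading contribution $b_1\,\partial_y\psi_{b_1,1}(1) = -\sqrt{2\lambda_1}\, b_1 + O(b_1^2)$.

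For the error term, I would combine a trace inequality $|\partial_y\epsilon(s,1)|\lesssim \|\epsilon\|_{H^2}$ with the elliptic regularity estimate $\|\epsilon\|_{H^2}\lesssim \|\mathcal{H}_b\epsilon\|_{L^2_b} + \|\epsilon\|_{L^2_b}$ for the radial Dirichlet problem on $B_1(0)$. The weight $\rho_b$ is bounded from above and below uniformly for $|b|<b^*$, so the constants are uniform in $b$. The orthogonality conditions $\langle\epsilon,\psi_{b,j}\rangle_b = 0$ for $1\le j\le k$ together with the spectral gap \eqref{spectralgap1} yield $\|\epsilon\|_{L^2_b} \lesssim \|\mathcal{H}_b\epsilon\|_{L^2_b} = \sqrt{\mathcal{E}}$. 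Invoking \eqref{bootstrap assumption for k=1} and \eqref{bootstrap assumption for k>1} gives $|\partial_y\epsilon(s,1)| \lesssim |b_1|^{3/2}$ when $k=1$ and $|\partial_y\epsilon(s,1)| \lesssim |b|$ when $k>1$.

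To conclude: for $k=1$ the expansion becomes $a = -\sqrt{2\lambda_1}\,b_1 + O(b_1^2) + O(|b_1|^{3/2}) = -\sqrt{2\lambda_1}\,b_1 + O(|b_1|^{3/2})$. For $k>1$, the $j=k$ mode contributes $(-1)^k\sqrt{2\lambda_k}\,b_k + O(|b_k||b|) = (-1)^k\sqrt{2\lambda_k}\,b_k + O(|b|)$ since $|b_k|\le 3b^*$, and for the lower modes the bootstrap $|V_j|\le D_k$ together with \eqref{def of V_j} and \eqref{hyperparameter} gives $|b_j(s)|/b(s) \lesssim (s+1)e^{-\eta_k s}$, bounded uniformly on $[0,\infty)$, so $\sum_{j<k} b_j\,\partial_y\psi_{b,j}(1) = O(|b|)$. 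The main technical delicacy I expect is the trace/elliptic step: in the weighted setting, verifying $|\partial_y\epsilon(s,1)| \lesssim \sqrt{\mathcal{E}}$ with constants uniform in $|b|<b^*$ requires careful use of the Dirichlet condition $\epsilon(s,1) = 0$ together with the orthogonality conditions to absorb the lower-order $\|\epsilon\|_{L^2_b}$ term via the spectral gap.
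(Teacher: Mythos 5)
Your proposal is correct and follows essentially the same route as the paper: read $a$ off the boundary condition, compute $\partial_y\psi_{b,j}(1)=(-1)^j\sqrt{2\lambda_j}+O(|b|)$ from \eqref{decompose of psi_{b,k}}, \eqref{value of mu_{b,jk}} and \eqref{estimates for tilde{phi}_{b,k}}, control $\partial_y\epsilon(s,1)$ via the $H^2$ bootstrap together with the orthogonality/spectral-gap bound $\|\partial_y\epsilon\|_{L^2_b}\lesssim\|\mathcal{H}_b\epsilon\|_{L^2_b}=\sqrt{\mathcal{E}}$, and absorb the lower modes for $k>1$ using $|b_j|\lesssim b$ from the $V_j$ bootstrap. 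The only difference is the step you flag as delicate: instead of a trace inequality plus elliptic regularity, the paper gets $|\partial_y\epsilon(1)|\lesssim\|\mathcal{H}_b\epsilon\|_{L^2_b}+\|\partial_y\epsilon\|_{L^2_b}$ directly by pairing $\mathcal{H}_b\epsilon$ with the test function $y$ and integrating by parts in divergence form \eqref{self adjointness of H_b}, which avoids the weighted trace/regularity discussion entirely.
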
 
\begin{proof}
Recall from the boundary condition that
\[
a = -\frac{\lambda_s}{\lambda} = \partial_{y} Q_{\beta_k}(s,1) + \partial_{y} \epsilon(s,1).
\]
We begin by estimating the term involving $Q_{\beta_k}$. Recall the definition of $\eta_j$ given in \eqref{renormalization of unperturbed eigenfunctions}. Differentiating $\eta_j$ with respect to $y$ at $y = 1$, we obtain
\[
\partial_{y} \eta_j(1) = (-1)^j \sqrt{2 \lambda_j}.
\]
Therefore, from \eqref{decompose of psi_{b,k}}, the differentiation of $\psi_{b,j}$
at $y=1$ for $1\leq j\leq k$ is 
\begin{equation}\label{differentiation of psi_{b,j} at y=1}
\begin{aligned}
\partial_{y}\psi_{b,j}(1) & =\partial_{y}\eta_{j}(1)+\sum_{i=1}^{j-1}\mu_{b,ij}\partial_{y}\eta_{i}(1)+\partial_{y}\tilde{\phi}_{b,j}(1)\\
 & =(-1)^{j}\sqrt{2\lambda_{j}}+O(|b|).
\end{aligned}
\end{equation}
The last equality follows from the estimates on $\mu_{b,ij}$ in \eqref{value of mu_{b,jk}} and the bounds on $\partial_{y} \tilde{\phi}_{b,j}(1)$ in \eqref{estimates for tilde{phi}_{b,k}}. Thus, for $k = 1$, replacing $b$ with $b_1$, the above differentiation yields
\[
\partial_{y} Q_{\beta_1}(1) = -\sqrt{2 \lambda_1} \, b_1 + O(|b_1|^2),
\]
whereas for $k > 1$,
\[
\partial_{y} Q_{\beta_k}(1) = \sum_{j=1}^{k} b_j \left( (-1)^j \sqrt{2 \lambda_j} + O(|b|) \right).
\]
Since $\epsilon=v-Q_{\beta_k}$, we have 
\begin{align*}
\partial_{y}\epsilon|_{y=1} & =\partial_{y}v|_{y=1}-\partial_{y}Q_{\beta}|_{y=1}=-\lambda_{s}/\lambda-\partial_{y}Q_{\beta}|_{y=1}\\
 & =\begin{cases}
a+\sqrt{2\lambda_{1}}b_1+O(|b_1|^{2})\quad\text{ for } k=1,\\
a-\sum_{j=1}^{k}b_{j}\left((-1)^{j}\sqrt{2\lambda_{j}}+O(|b|)\right) \text{ for } k>1.
\end{cases}
\end{align*}
We now estimate $|\partial_{y} \epsilon(1)|$ using the bootstrap assumptions on $\epsilon$. This highlights the necessity of imposing $H^2$-regularity in the bootstrap hypothesis. Indeed, an integration by parts yields
\begin{align*}
\langle \mathcal{H}_{b} \epsilon, y \rangle_{b} 
&= \int_{0}^{1} -\frac{1}{y \rho_{b}} \partial_{y} \left( y \rho_{b} \partial_{y} \epsilon \right) \cdot y \rho_{b} y \, dy \\
&= -\left[ y^2 \rho_{b}(y) \, \partial_{y} \epsilon(y) \right]_{0}^{1} + \int_{0}^{1} \partial_{y} \epsilon \cdot \rho_{b} y \, dy.
\end{align*}
Applying the Cauchy--Schwarz inequality, we obtain the estimate
\[
|\partial_{y} \epsilon(1)| \lesssim \| \mathcal{H}_{b} \epsilon \|_{L_{b}^{2}} + \| \partial_{y} \epsilon \|_{L_{b}^{2}}.
\]
Since $\epsilon(s,1)=0$, the orthogonality condition on $\epsilon$
in \eqref{profile set up k=1} for
$k=1$ and \eqref{profile set up k>1} for $k>1$ respectively with spectral
gap property in \eqref{spectralgap1} yield 
\begin{equation*}
\norm{\mathcal{H}_{b}\epsilon}_{L_{b}^{2}}\norm{\epsilon}_{L_{b}^{2}}\geq\langle\mathcal{H}_{b}\epsilon,\epsilon\rangle_{b}=\norm{\partial_{y}\epsilon}_{L_{b}^{2}}^{2}\geq(\lambda_{k}+O(|b|))\norm{\epsilon}_{L_{b}^{2}}^{2}.
\end{equation*}
Hence, from the bootstrap assumptions on $\mathcal{E}$ in \eqref{bootstrap assumption for k=1},
 \eqref{bootstrap assumption for k>1} yield
\[
|\partial_{y}\epsilon(1)|\lesssim\begin{cases}
|b_1|^{3/2},\quad k=1,\\
|b|,\ \quad k>1.
\end{cases}
\]
Furthermore, for $k > 1$, the bootstrap assumption \eqref{bootstrap assumption for k>1} yields
\[
|b_j(s)| \lesssim e^{-(\lambda_k + \eta_k) s}, \quad \text{for } 1 \leq j < k,
\]
and in particular, $|b_j(s)| \lesssim b(s)$ for all $1 \leq j < k$. This concludes the proof of the lemma.
\end{proof}

From Lemma~\ref{boundarylemma} and the bootstrap assumptions on the parameters $a$, $b$, and $b_j$, we obtain
\begin{equation}\label{asymptotic for a, k=1}
|a| \sim |b_1| 
\end{equation}
for $k = 1$, and for $k > 1$, we have
\begin{equation} \label{order of modulation parameter for k>1}
|a| \sim |b_k| \quad \text{and} \quad |b_j| \lesssim b \quad \text{for } 1 \leq j \leq k-1.
\end{equation}

We now derive Modulation vector Mod, and the profile error $\Psi$. 
\begin{lemma}(Leading order modulation equations)\label{leading order modulation equations}
Under the bootstrap assumption \eqref{asymptotic for a, k=1} for $k = 1$ and \eqref{order of modulation parameter for k>1} for $k > 1$, we have
\begin{equation*}
\partial_{s} Q_{\beta_k} + \mathcal{H}_{a} Q_{\beta_k} = \text{Mod} + \Psi,
\end{equation*}
where the modulation vector $\text{Mod}$ and the profile error $\Psi$ are defined as follows:

\textbf{Case $k=1$.}  
\begin{equation}\label{def of mod k=1}
\text{Mod}\coloneqq \left[(b_1)_{s}+b_1\lambda_{b_1,1}+(a-b_1)b_1\frac{\langle\Lambda\psi_{b_1,1},\psi_{b_1,1}\rangle_{b_1}}{\langle\psi_{b_1,1},\psi_{b_1,1}\rangle_{b_1}}\right]\psi_{b_1,1},
\end{equation}
and the profile error $\Psi$ satisfies the bounds 
\begin{equation}\label{profile error estimate for k=1}
|\langle\Psi,\psi_{b_1,1}\rangle_{b_1}|\lesssim b_1^{2}|(b_1)_{s}|,\quad \norm{\mathcal{H}_{b_1}\Psi}_{L_{b_1}^{2}}\lesssim|b_1(b_1)_{s}|+b_1^{2}.
\end{equation}
\textbf{Case $k>1$.}  
\begin{align}\label{def of mod k>1}
\text{Mod}\coloneqq \sum_{j=1}^{k}\left[(b_{j})_{s}+b_{j}\lambda_{b,j}+ab_{k}\frac{\langle\Lambda\psi_{b,k},\psi_{b,j}\rangle_{b}}{\langle\psi_{b,j},\psi_{b,j}\rangle_{b}}\right]\psi_{b,j}
\end{align}
and the profile error $\Psi$ satisfies the bounds
\begin{equation}\label{profile error estimate for k>1}
|\langle\Psi,\psi_{b,j}\rangle_{b}|\lesssim bb_{k},\quad \norm{\mathcal{H}_{b}\Psi}_{L_{b}^{2}}\lesssim b_{k}^{2},
\end{equation}
for $1\leq j\leq k$. \end{lemma}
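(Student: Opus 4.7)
The plan is to substitute the defining expressions for $Q_{\beta_k}$ into $\partial_s Q_{\beta_k} + \mathcal{H}_a Q_{\beta_k}$, apply the chain rule in $s$, and split $\mathcal{H}_a = \mathcal{H}_b + (a-b)\Lambda$ (with $b = b_1$ when $k=1$, and $b=b(s)$ the adiabatic parameter when $k>1$). Using $\mathcal{H}_b \psi_{b,j} = \lambda_{b,j}\psi_{b,j}$ from Proposition~\ref{prop:1}, this yields
\[
\partial_s Q_{\beta_k} + \mathcal{H}_a Q_{\beta_k}
= \sum_{j=1}^k \bigl[(b_j)_s + b_j \lambda_{b,j}\bigr]\psi_{b,j}
+ \sum_{j=1}^k b_j b_s \partial_b\psi_{b,j}
+ (a-b)\Lambda Q_{\beta_k},
\]
and the first sum already reproduces the first two terms inside the brackets of $\text{Mod}$. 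The remaining task is to absorb the $L^2_b$-projection of the $(a-b)\Lambda Q_{\beta_k}$-contribution onto $\mathrm{span}\{\psi_{b,j}\}_{j=1}^k$ into $\text{Mod}$ and to collect everything else as $\Psi$.

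For $k=1$ the only $\Lambda$-contribution is $(a-b_1)b_1 \Lambda\psi_{b_1,1}$; writing it as the sum of its $L^2_{b_1}$-projection onto $\psi_{b_1,1}$ (which matches exactly the third term in \eqref{def of mod k=1}) and its orthogonal complement $N$, the residual becomes $\Psi = b_1(b_1)_s \partial_{b_1}\psi_{b_1,1} + (a-b_1)b_1\, N$. Since $\langle N, \psi_{b_1,1}\rangle_{b_1}=0$, the inner product $\langle \Psi,\psi_{b_1,1}\rangle_{b_1}$ collapses to $b_1(b_1)_s \langle\partial_{b_1}\psi_{b_1,1},\psi_{b_1,1}\rangle_{b_1}$, which is $O(b_1^2 |(b_1)_s|)$ by the second line of \eqref{IVTesitmates}. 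For the $L^2_{b_1}$-bound on $\mathcal{H}_{b_1}\Psi$, I would differentiate the eigenrelation $\mathcal{H}_{b_1}\psi_{b_1,1}=\lambda_{b_1,1}\psi_{b_1,1}$ in $b_1$ to express $\mathcal{H}_{b_1}\partial_{b_1}\psi_{b_1,1}$ as $(\partial_{b_1}\lambda_{b_1,1})\psi_{b_1,1} + \lambda_{b_1,1}\partial_{b_1}\psi_{b_1,1} - \Lambda\psi_{b_1,1}$, of size $O(1)$ in $L^2_{b_1}$, and combine with $|a-b_1|\lesssim|b_1|$ from Lemma~\ref{boundarylemma} to obtain the $|b_1(b_1)_s|+b_1^2$ bound in \eqref{profile error estimate for k=1}.

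For $k>1$, I would expand
\[
(a-b)\Lambda Q_{\beta_k}
= ab_k \Lambda\psi_{b,k}
+ a\sum_{j=1}^{k-1} b_j \Lambda\psi_{b,j}
- b\sum_{j=1}^k b_j \Lambda\psi_{b,j},
\]
and decompose only the dominant piece $ab_k\Lambda\psi_{b,k}=P+R$ into its $L^2_b$-projection $P$ onto $\mathrm{span}\{\psi_{b,j}\}_{j=1}^k$ and its orthogonal remainder $R$. Because $\mathcal{H}_b$ is self-adjoint on $L^2_b$ with simple eigenvalues (by \eqref{asymptotic expansion of lambda_{b,k}} and \eqref{increment}), the $\psi_{b,j}$ are genuinely $L^2_b$-orthogonal, so $P$ matches exactly the new piece of \eqref{def of mod k>1} and $R\perp\psi_{b,j}$ for every $j$. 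The residual is
\[
\Psi = \sum_{j=1}^k b_j b_s \partial_b\psi_{b,j} + R + a\sum_{j<k}b_j \Lambda\psi_{b,j} - b\sum_{j=1}^k b_j \Lambda\psi_{b,j}.
\]
The bound $|\langle\Psi,\psi_{b,j}\rangle_b|\lesssim b\,b_k$ then follows from $R\perp\psi_{b,j}$, the cross-product estimates \eqref{IVTesitmates}, the bootstrap inequalities $|b_s|\lesssim b$ (from \eqref{hyperparameter}), $|b_i|\lesssim b$ for $i<k$ and $|a|\sim|b_k|$ from Lemma~\ref{boundarylemma} and \eqref{order of modulation parameter for k>1}, and Cauchy--Schwarz using $\|\Lambda\psi_{b,j}\|_{L^2_b}=O(1)$. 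The $\|\mathcal{H}_b\Psi\|_{L^2_b}\lesssim b_k^2$ estimate is analogous, using the differentiated eigenrelation to control $\|\mathcal{H}_b\partial_b\psi_{b,j}\|_{L^2_b}=O(1)$ and elliptic regularity for $\|\mathcal{H}_b\Lambda\psi_{b,j}\|_{L^2_b}=O(1)$, with the dominant contribution coming from $|ab_k|=O(b_k^2)$ and the rest dominated via $b\leq b_k$.

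The main obstacle I anticipate is the perfect cancellation of the projection piece inside $\langle\Psi,\psi_{b,j}\rangle_b$ for $k>1$: this requires genuine (not merely near-) $L^2_b$-orthogonality of the $\psi_{b,j}$, which I would base on self-adjointness of $\mathcal{H}_b$ with respect to $\langle\cdot,\cdot\rangle_b$ together with the simplicity of the perturbed eigenvalues. Once that is in hand, the remaining work is systematic bookkeeping: matching the sizes of $|a|$, $|a-b|$, $|b_s|$, $|b_j|$, and $|b_k|$ supplied by Lemma~\ref{boundarylemma} and the bootstrap hypothesis to the powers of $b$, $b_k$, and $(b_j)_s$ prescribed in \eqref{profile error estimate for k=1} and \eqref{profile error estimate for k>1}.
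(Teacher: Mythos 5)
Your proposal is correct and follows essentially the same route as the paper: the same expansion of $\partial_s Q_{\beta_k}+\mathcal{H}_a Q_{\beta_k}$ with $\mathcal{H}_a=\mathcal{H}_b+(a-b)\Lambda$, the same identification of $\mathrm{Mod}$ as the $\langle\cdot,\cdot\rangle_b$-projection of the dominant $\Lambda$-term onto $\mathrm{span}\{\psi_{b,j}\}$, and the same bounds via \eqref{IVTesitmates}, the differentiated eigenrelation \eqref{e:5} (i.e.\ \eqref{part3:1}), and the bootstrap relations $|a-b_1|\lesssim|b_1|$, $|b_s|\lesssim b$, $|b_j|\lesssim b<|b_k|$. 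Your explicit justification of the exact $L^2_b$-orthogonality of the $\psi_{b,j}$ via self-adjointness and simplicity is a point the paper uses implicitly, so there is no substantive divergence.
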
 
\begin{proof}

\textbf{Case $k=1$.} Recalling that $\mathcal{H}_{a}=\mathcal{H}_{b_1}+(a-b_1)\Lambda$ and $Q_{\beta_1}=b_1\psi_{b_1,1}$
for $k=1$, we have
\begin{align*}
\partial_{s}Q_{\beta_1}+\mathcal{H}_{a}Q_{\beta_1} & =(b_1)_{s}\psi_{b_1,1}+b_1(b_1)_{s}\partial_{b_1}\psi_{b_1,1}+b_1\lambda_{b_1,1}\psi_{b_1,1}+(a-b_1)b_1\Lambda\psi_{b_1,1}.
\end{align*}
We define Mod by
\[
\text{Mod} \coloneqq \left[(b_1)_{s}+b_1\lambda_{b_1,1}+(a-b_1)b_1\frac{\langle\Lambda\psi_{b_1,1},\psi_{b_1,1}\rangle_{b_1}}{\langle\psi_{b_1,1},\psi_{b_1,1}\rangle_{b_1}}\right]\psi_{b_1,1}.
\]
Then, the profile error $\Psi$ becomes
\begin{equation*} \label{psik=1}
    \Psi = b_1 (b_1)_s \, \partial_{b_1} \psi_{b_1,1} + (a - b_1) b_1 \left( \Lambda \psi_{b_1,1} - \frac{\langle \Lambda \psi_{b_1,1}, \psi_{b_1,1} \rangle_{b_1}}{\langle \psi_{b_1,1}, \psi_{b_1,1} \rangle_{b_1}} \, \psi_{b_1,1} \right).
\end{equation*}
From \eqref{IVTesitmates}, and noting that the second term on the right-hand side of \eqref{psik=1} is orthogonal to $\psi_{b_1,1}$, we obtain
\begin{equation*}
|\langle \Psi, \psi_{b_1,1} \rangle_{b_1}| \lesssim b_1^2 |(b_1)_s|.
\end{equation*}
Also, using \eqref{e:5} together with the estimates from \eqref{decompose of psi_{b,k}}, \eqref{value of partial_b mu_{b,jk}}, \eqref{asymptotic expansion of lambda_{b,k}}, and \eqref{estimates for tilde{phi}_{b,k}}, we obtain
\begin{equation} \label{part3:1}
\| \mathcal{H}_{b}(b \partial_{b} \psi_{b,1}) \|_{L_{b}^{2}} 
\leq \| b \Lambda \psi_{b,1} \|_{L_{b}^{2}} 
+ |b| \, |\partial_{b} \lambda_{b,1}| \, \| \psi_{b,1} \|_{L_{b}^{2}} 
+ |b| \, |\lambda_{b,1}| \, \| \partial_{b} \psi_{b,1} \|_{L_{b}^{2}} 
\lesssim |b|.
\end{equation}
By replacing $b$ with $b_1$ in \eqref{part3:1}, and using the asymptotic relation $|a| \sim |b_1|$ from \eqref{asymptotic for a, k=1}, we deduce
\begin{equation*}
\| \mathcal{H}_{b_1} \Psi \|_{L_{b_1}^{2}} \lesssim |b_1 (b_1)_s| + b_1^2.
\end{equation*}

\textbf{Case $k>1$.} From the definition of $Q_{\beta_k}$ in \eqref{profile Q_{beta_k}}, we have 
\begin{align*}
\partial_{s}Q_{\beta_k}+\mathcal{H}_{a}Q_{\beta_k} & =\sum_{j=1}^{k}\left\{ \left((b_{j})_{s}+b_{j}\lambda_{b,j}\right)\psi_{b,j}+b_{s}\left(\frac{b_{j}}{b}\right)(b\partial_{b}\psi_{b,j})+(a-b)b_{j}\Lambda\psi_{b,j}\right\} .
\end{align*}
We define Mod by 
\begin{align*}
\text{Mod} \coloneqq \sum_{j=1}^{k}\left[(b_{j})_{s}+b_{j}\lambda_{b,j}+ab_{k}\frac{\langle\Lambda\psi_{b,k},\psi_{b,j}\rangle_{b}}{\langle\psi_{b,j},\psi_{b,j}\rangle_{b}}\right]\psi_{b,j}.
\end{align*}
Then, profile error $\Psi$ is 
\begin{equation}\label{profile error for k>1}
\begin{aligned}
\Psi & =\sum_{j=1}^{k-1}\left\{ b_{s}\left(\frac{b_{j}}{b}\right)(b\partial_{b}\psi_{b,j})+(a-b)b_{j}\Lambda\psi_{b,j}\right\} +b_{s}\left(\frac{b_{k}}{b}\right)(b\partial_{b}\psi_{b,k})\\
 &\quad +ab_{k}\left(\Lambda\psi_{b,k}-\sum_{j=1}^{k}\frac{\langle\Lambda\psi_{b,k},\psi_{b,j}\rangle_{b}}{\langle\psi_{b,j},\psi_{b,j}\rangle_{b}}\psi_{b,j}\right)-bb_{k}\Lambda\psi_{b,k}.
\end{aligned}
\end{equation}
From \eqref{hyperparameter}, we have the bound $|b_s| \lesssim b$. Moreover, since the third term on the right-hand side of \eqref{profile error for k>1} is orthogonal to $\psi_{b,j}$ for $1 \leq j \leq k$, it follows from \eqref{order of modulation parameter for k>1} that
\begin{align*}
|\langle \Psi, \psi_{b,j} \rangle_{b}| \lesssim b |b_k|.
\end{align*}
Finally, applying \eqref{part3:1}, we estimate
\begin{equation*}
\norm{\mathcal{H}_{b} \Psi}_{L_{b}^{2}} \lesssim b_k^2.
\end{equation*}
\end{proof}

We now turn to deriving modulation estimates for the parameters $(b_j)_{1 \leq j \leq k}$.

\begin{proposition}(Modulation estimates for $b_{j}$)\label{Modulation equation proposition}

\textbf{Case $k=1$.} The $b_1$ law is given by 
\begin{equation}\label{modulation estimates for k=1}
|(b_1)_{s}+\lambda_{1}b_1+\sqrt{2\lambda_{1}}b_1^{2}|\lesssim|b_1|^{5/2}.
\end{equation}
\textbf{Case $k>1$.} The modulation dynamical system for the vector $(b_{j})_{1\leq j\leq k}$
is given by 
\begin{equation}\label{modulation esitmates for k>1}
\begin{aligned}\left|(b_{k})_{s}+\lambda_{k}b_{k}+\right. & \left.(-1)^{k+1}\sqrt{2\lambda_{k}}b_{k}^{2}\right|\\
 & +\sum_{j=1}^{k-1}\left|(b_{j})_{s}+\lambda_{j}b_{j}+(-1)^{k}\sqrt{2\lambda_{k}}b_{k}^{2}\langle\Lambda\eta_{k},\eta_{j}\rangle_{0}\right|\lesssim b|b_{k}|.
\end{aligned}
\end{equation}
\end{proposition}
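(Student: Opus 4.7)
The plan is to derive these modulation equations by projecting the error equation against the eigenmodes $\psi_{b,j}$. Combining $v = Q_{\beta_k} + \epsilon$, the renormalized equation $\partial_s v + \mathcal{H}_a v = 0$, and Lemma~\ref{leading order modulation equations} yields
\begin{equation*}
\partial_s \epsilon + \mathcal{H}_a \epsilon = -\text{Mod} - \Psi.
\end{equation*}
I would test this identity against $\psi_{b_1,1}$ (when $k=1$), or against each $\psi_{b,j}$ for $1 \leq j \leq k$ (when $k>1$), with respect to the appropriate weighted inner product. Since $\{\psi_{b,j}\}$ are orthogonal eigenfunctions of the self-adjoint $\mathcal{H}_b$ with distinct eigenvalues, the projection of $\text{Mod}$ isolates exactly the $j$-th bracket in \eqref{def of mod k=1}/\eqref{def of mod k>1}, up to a factor $\|\psi_{b,j}\|_{L^2_b}^2 = 1 + O(|b|)$ by \eqref{Norm of psi}, so the stated dynamical systems follow by expanding this bracket in $b$ and $b_j$.

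To control the left-hand side, I would split $\mathcal{H}_a = \mathcal{H}_b + (a-b)\Lambda$; the self-adjointness of $\mathcal{H}_b$ combined with the orthogonality $\langle \epsilon, \psi_{b,j}\rangle_b = 0$ forces $\langle \mathcal{H}_b \epsilon, \psi_{b,j}\rangle_b = \lambda_{b,j}\langle \epsilon, \psi_{b,j}\rangle_b = 0$, leaving only $(a-b)\langle \Lambda \epsilon, \psi_{b,j}\rangle_b$ to be estimated via Cauchy--Schwarz. For $\langle \partial_s \epsilon, \psi_{b,j}\rangle_b$, I would differentiate the orthogonality identity $\langle \epsilon, \psi_{b,j}\rangle_b \equiv 0$ in $s$; since both $\psi_{b,j}$ and the weight $\rho_b(y) = e^{-by^2/2}$ depend on $b = b(s)$, this produces
\begin{equation*}
\langle \partial_s \epsilon, \psi_{b,j}\rangle_b = -b_s \langle \epsilon, \partial_b \psi_{b,j}\rangle_b + \tfrac{1}{2} b_s \langle y^2 \epsilon, \psi_{b,j}\rangle_b.
\end{equation*}
Using $|b_s| \lesssim |b|$ from \eqref{hyperparameter} (or the a priori leading law when $k=1$), $\|\partial_b \psi_{b,j}\|_{L^2_b} = O(1)$ from \eqref{Norm of partial _b psi}, and the coercivity $\|\epsilon\|_{L^2_b} \lesssim \|\mathcal{H}_b \epsilon\|_{L^2_b}$ provided by \eqref{spectralgap1} together with the bootstrap on $\mathcal{E}$, every such contribution sits inside the error budget $|b_1|^{5/2}$ (for $k=1$) or $b|b_k|$ (for $k>1$). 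The projection of $\Psi$ is already handled by \eqref{profile error estimate for k=1}/\eqref{profile error estimate for k>1}.

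The final step is to substitute the asymptotic expansions $\lambda_{b,k} = \lambda_k - b + O(b^2)$ from \eqref{asymptotic expansion of lambda_{b,k}}, $\langle \Lambda \psi_{b,k}, \psi_{b,k}\rangle_b/\|\psi_{b,k}\|^2_{L^2_b} = -1 + O(|b|)$ from \eqref{differentiation of lambda_{b,k} w.r.t b}, $\langle \Lambda \psi_{b,k}, \psi_{b,j}\rangle_b = \langle \Lambda \eta_k, \eta_j\rangle_0 + O(|b|)$ for $j<k$ via \eqref{decompose of psi_{b,k}}--\eqref{value of mu_{b,jk}}, and the boundary law $a = (-1)^k \sqrt{2\lambda_k}\, b_k + O(|b|)$ (resp. $-\sqrt{2\lambda_1}\, b_1 + O(|b_1|^{3/2})$) from Lemma~\ref{boundarylemma}; the cancellation between the $-b^2$ coming from $b_k \lambda_{b,k}$ and the contribution of $(a-b)b_k \cdot(-1)$ produces exactly the $(-1)^{k+1}\sqrt{2\lambda_k}\, b_k^2$ coefficient in the $j=k$ equation, and analogously for $j<k$ the factor $\langle \Lambda \eta_k, \eta_j\rangle_0$ gives the stated coefficient on $b_k^2$. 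The main technical obstacle is the tight error budget in the unstable regime: the target is only $O(b|b_k|)$, so every remainder — the weight derivative, the $b$-dependence of $\psi_{b,j}$, the $(a-b)\Lambda \epsilon$ cross term, and the higher-order residuals from Proposition~\ref{prop:1} — must be bookkept carefully. This closure works thanks to the bootstrap gap $|b_k(s)| > b(s)$ from \eqref{bootstrap assumption for k>1}, which ensures that $|a| \sim |b_k|$ dominates $b$ in the leading dynamics while simultaneously maintaining $\|\epsilon\|_{L^2_b} = O(b)$.
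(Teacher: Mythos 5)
Your proposal is correct and follows essentially the same route as the paper: project the error equation $\partial_s\epsilon+\mathcal{H}_a\epsilon=-\mathrm{Mod}-\Psi$ onto $\psi_{b,j}$, kill $\langle\mathcal{H}_b\epsilon,\psi_{b,j}\rangle_b$ by self-adjointness and orthogonality, differentiate the orthogonality condition in $s$ to handle $\langle\partial_s\epsilon,\psi_{b,j}\rangle_b$, control the remainders with the bootstrap on $\mathcal{E}$ and the spectral gap, and then insert the expansions of $\lambda_{b,k}$, $\langle\Lambda\psi_{b,k},\psi_{b,j}\rangle_b$ and the boundary law for $a$. The only minor points worth noting are that for $k=1$ the bound $|(b_1)_s|\lesssim|b_1|$ is not available a priori but is obtained by absorbing the $|(b_1)_s|\,|b_1|^{3/2}$ remainder into the left-hand side of the resulting inequality (as the paper does), and that for $k>1$ the $-bb_k$ term from $b_k\lambda_{b,k}$ does not cancel but is simply absorbed into the $O(b|b_k|)$ error.
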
 
\begin{proof}
Since, for each $k \in \mathbb{N}$, the function $v = Q_{\beta_k} + \epsilon$ is a solution to \eqref{renormalised stefan}, $\epsilon$ satisfies
\begin{equation} \label{part2:2}
\partial_{s} \epsilon + \mathcal{H}_{a} \epsilon = -\text{Mod} - \Psi,
\end{equation}
where the modulation vector $\text{Mod}$ and the error term $\Psi$ are as defined in Lemma~\ref{leading order modulation equations}.

\textbf{Case $k=1$.}  
We take the $\langle \cdot, \cdot \rangle_{b_1}$ inner product of equation \eqref{part2:2} with $\psi_{b_1,1}$ on both sides:
\begin{align*}
-\langle \epsilon, \partial_{s} \psi_{b_1,1} \rangle_{b_1} + \frac{(b_1)_s}{2} \langle \epsilon, |y|^{2} \psi_{b_1,1} \rangle_{b_1} 
= -\langle \text{Mod} + \Psi, \psi_{b_1,1} \rangle_{b_1} - (a - b_1) \langle \Lambda \epsilon, \psi_{b_1,1} \rangle_{b_1}.
\end{align*}
Now, since $\epsilon$ is orthogonal to $\psi_{b_1,1}$, an integration by parts applied to $\langle \Lambda \epsilon, \psi_{b_1,1} \rangle_{b_1}$ yields
\begin{align*}
-\langle \epsilon, \partial_{s} \psi_{b_1,1} \rangle_{b_1} 
&+ \frac{(b_1)_s}{2} \langle \epsilon, |y|^{2} \psi_{b_1,1} \rangle_{b_1} 
+ (a - b_1) \langle \Lambda \epsilon, \psi_{b_1,1} \rangle_{b_1} \\
&= \langle \epsilon, 
- (b_1)_s \partial_{b_1} \psi_{b_1,1} 
+ \frac{(b_1)_s}{2} |y|^{2} \psi_{b_1,1} 
+ (a - b_1) \left[ -\Lambda \psi_{b_1,1} + b_1 y^{2} \psi_{b_1,1} \right] \rangle_{b_1}.
\end{align*}
From the bootstrap bound on $\mathcal{E}$ in \eqref{profile set up k=1} and the spectral gap estimate \eqref{spectralgap1}, we have $\| \epsilon \|_{L^2_b} \leq \sqrt{\mathcal{E}}$. Moreover, since $\langle \epsilon, \psi_{b_1,1} \rangle_{b_1} = 0$, it follows that
\[
\| \epsilon \|_{L_{b_1}^{2}} = O(|b_1|^{3/2}).
\]
Therefore, using the estimates in \eqref{IVTesitmates}, \eqref{asymptotic for a, k=1} together with the Cauchy--Schwarz inequality, we obtain
\begin{align*}
&\left| \langle \epsilon,\,
-(b_1)_s \partial_{b_1} \psi_{b_1,1}
+ \frac{(b_1)_s}{2} |y|^2 \psi_{b_1,1}
+ (a - b_1)\left[ -\Lambda \psi_{b_1,1} + b_1 y^2 \psi_{b_1,1} \right]
\rangle_{b_1} \right| \\
&\lesssim \left( |(b_1)_s| + |b_1| \right) |b_1|^{3/2}.
\end{align*}
Combining this with the estimate for $\langle \Psi, \psi_{b_1,1} \rangle_{b_1}$ in \eqref{profile error estimate for k=1} and \eqref{def of mod k=1}, we obtain
\begin{align*}
\frac{|\langle \text{Mod},\psi_{b_1,1}\rangle_{b_1}|}{\langle\psi_{b_1,1},\psi_{b_1,1}\rangle_{b_1}}=|(b_1)_{s}+\lambda_{b_1,1}b_1+&(a-b_1)b_1\frac{\langle\Lambda\psi_{b_1,1},\psi_{b_1,1}\rangle_{b_1}}{\langle\psi_{b_1,1},\psi_{b_1,1}\rangle_{b_1}}|\\
&\lesssim\ (|(b_1)_{s}|+|b_1|)|b_1|^{3/2}+b_1^{2}|(b_1)_{s}|.
\end{align*}
From \eqref{asymptotic expansion of lambda_{b,k}}, \eqref{boundaryconditions}, \eqref{Norm of psi}, and \eqref{IVTesitmates}, together with an integration by parts, we compute
\begin{equation} \label{prop3.4:eq1}
\begin{aligned}
\langle \Lambda \psi_{b_1,1}, \psi_{b_1,1} \rangle_{b_1} 
&= \int_{0}^{1} y  \partial_{y} \psi_{b_1,1}  \psi_{b_1,1}  \rho_{b_1}(y)  y  dy \\
&= \int_{0}^{1} y  \partial_{y} \eta_1  \eta_1  y  dy + O(|b_1|) \\
&= \left[ \tfrac{1}{2} \eta_1^2(y) y^2 \right]_0^1 - \int_0^1 \eta_1^2(y)  dy + O(|b_1|).
\end{aligned}
\end{equation}
As a result, we obtain the estimate
\begin{equation} \label{part3eq001}
\left| (b_1)_s + \lambda_1 b_1 + \sqrt{2 \lambda_1} b_1^2 + O(|b_1|^{5/2}) \right| 
\lesssim \left( |(b_1)_s| + |b_1| \right) |b_1|^{3/2} + b_1^2 |(b_1)_s|.
\end{equation}
Finally, from \eqref{part3eq001}, we have the bounds for $|(b_1)_{s}|$: 
\[
|(b_1)_{s}|\lesssim|b_1|.
\]
This proves Proposition~\ref{Modulation equation proposition} for $k=1$.

\textbf{Case $k>1$.}
By taking the $\langle \cdot, \cdot \rangle_b$ inner product of both sides of \eqref{part2:2} with $\psi_{b,j}$ for $1 \leq j \leq k$, and performing a similar computation as in \textbf{Step 1}, we deduce from \eqref{order of modulation parameter for k>1}, \eqref{bootstrap assumption for k>1}, and \eqref{Norm of psi}, along with the bound $|b_s| \lesssim b$ from \eqref{hyperparameter}, that
\begin{equation} \label{mod estimate for k>1}
\left| \langle \epsilon,\, -b_s \partial_b \psi_{b,j} + \frac{b_s}{2} |y|^2 \psi_{b,j} + (a - b)\left[ -\Lambda \psi_{b,j} + b y^2 \psi_{b,j} \right] \rangle_b \right| \lesssim b |b_k|.
\end{equation}
Combining \eqref{mod estimate for k>1} with \eqref{profile error estimate for k>1} and \eqref{def of mod k>1}, we obtain, for $1 \leq j \leq k$,
\[
\frac{|\langle \text{Mod}, \psi_{b,j} \rangle_b|}{\langle \psi_{b,j}, \psi_{b,j} \rangle_b}
= \left| (b_j)_s + \lambda_{b,j} b_j + a b_k \frac{\langle \Lambda \psi_{b,k}, \psi_{b,j} \rangle_b}{\langle \psi_{b,j}, \psi_{b,j} \rangle_b} \right| \lesssim b |b_k|.
\]
Then, using \eqref{asymptotic expansion of lambda_{b,k}} and \eqref{Norm of psi}, we obtain
\begin{align} \label{part3eq002}
\sum_{j=1}^{k} \left| (b_j)_s + \lambda_j b_j + a b_k \langle \Lambda \eta_k, \eta_j \rangle_0 \right| \lesssim b |b_k|.
\end{align}
Finally, substituting the expression for $a$ from \eqref{boundaryconditions} into \eqref{part3eq002}, and applying the same computation as in \eqref{prop3.4:eq1}, yields the desired result.
\end{proof}

\subsection{Energy bounds and closing the bootstrap argument}
\label{Energy bounds and closing bootstrap arguments}
In this section, we complete the continuity argument by deriving suitable energy bounds and proving Proposition~\ref{bootstrap assumptions}. Our main goal is to establish pointwise control of the energy functional $\mathcal{E}$ with respect to the rescaled time variable $s$, as stated in Proposition~\ref{pointwise energy control}. This estimate allows us to close the bootstrap argument and thereby conclude the proof.

\begin{proposition}\label{pointwise energy control}
(Energy bound) We have the pointwise control: 1. $k=1$ : 
\begin{equation}
\frac{1}{2}\frac{d}{ds}\left\{ \mathcal{E}+O(|b_1|^{7/2})\right\} +\lambda_{2}\mathcal{E}\lesssim|b_1|^{7/2}.\label{Energyestimatek=1}
\end{equation}
2. $k>1$ : 
\begin{equation}
\frac{1}{2}\frac{d}{ds}\left\{ \mathcal{E}+O(|b_{k}|^{3})\right\} +\lambda_{k+1}\mathcal{E}\lesssim b_{k}^{3}\label{Energyestimatek>1}
\end{equation}
\end{proposition}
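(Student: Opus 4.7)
The plan is to apply $\mathcal{H}_{b}$ to the error equation
\[
\partial_{s}\epsilon=-\mathcal{H}_{b}\epsilon-(a-b)\Lambda\epsilon-\mathrm{Mod}-\Psi
\]
and derive an evolution equation for $\epsilon_{2}=\mathcal{H}_{b}\epsilon$. Using the product rule $\partial_{s}(\mathcal{H}_{b}\epsilon)=\mathcal{H}_{b}\partial_{s}\epsilon+b_{s}\Lambda\epsilon$ together with the commutator identity $[\mathcal{H}_{b},\Lambda]=-[\Delta,\Lambda]=2(\mathcal{H}_{b}-b\Lambda)$, which follows from $[\Delta,\Lambda]=2\Delta$ on radial functions in dimension two, I obtain
\[
\partial_{s}\epsilon_{2}+\mathcal{H}_{b}\epsilon_{2}=b_{s}\Lambda\epsilon-(a-b)[\Lambda\epsilon_{2}+2\epsilon_{2}-2b\Lambda\epsilon]-\mathcal{H}_{b}\mathrm{Mod}-\mathcal{H}_{b}\Psi.
\]
Taking the $\langle\cdot,\cdot\rangle_{b}$ inner product with $\epsilon_{2}$ and accounting for $\partial_{s}\rho_{b}=-\tfrac{b_{s}}{2}y^{2}\rho_{b}$ yields an energy identity
\[
\frac{1}{2}\frac{d\mathcal{E}}{ds}+\langle\mathcal{H}_{b}\epsilon_{2},\epsilon_{2}\rangle_{b}=\langle\text{error}, \epsilon_{2}\rangle_{b}-\frac{b_{s}}{4}\int_{0}^{1}y^{2}\epsilon_{2}^{2}\rho_{b}y\,dy.
\]

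\textbf{Coercivity and orthogonality.} The principal dissipation comes from the spectral gap \eqref{spectralgap1}, which requires $\epsilon_{2}$ to inherit the orthogonality of $\epsilon$. Since $\psi_{b,j}(1)=\epsilon(s,1)=0$, two integrations by parts give
\[
\langle\epsilon_{2},\psi_{b,j}\rangle_{b}=\langle\epsilon,\mathcal{H}_{b}\psi_{b,j}\rangle_{b}=\lambda_{b,j}\langle\epsilon,\psi_{b,j}\rangle_{b}=0
\]
for $1\leq j\leq k$. Hence $\|\partial_{y}\epsilon_{2}\|_{L_{b}^{2}}^{2}\geq(\lambda_{k+1}+O(|b|))\mathcal{E}$, and an integration by parts yields
\[
\langle\mathcal{H}_{b}\epsilon_{2},\epsilon_{2}\rangle_{b}=\|\partial_{y}\epsilon_{2}\|_{L_{b}^{2}}^{2}-\rho_{b}(1)\partial_{y}\epsilon_{2}(1)\epsilon_{2}(1).
\]
The boundary term needs separate treatment: evaluating $\partial_{s}\epsilon+\mathcal{H}_{a}\epsilon=-\mathrm{Mod}-\Psi$ at $y=1$, where $\partial_{s}\epsilon(1)=0$ and $\mathrm{Mod}(1)=0$ since each $\psi_{b,j}(1)=0$, yields $\epsilon_{2}(1)=-\Psi(1)-(a-b)\partial_{y}\epsilon(1)$, which is of order $b_{k}^{2}$ (resp.\ $b_{1}^{2}$) by the profile-error expression and the bootstrap bounds; this is sharp enough to absorb the boundary contribution into the RHS.

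\textbf{Control of source terms.} The crucial algebraic cancellation is that $\langle\mathcal{H}_{b}\mathrm{Mod},\epsilon_{2}\rangle_{b}=0$: since $\mathrm{Mod}\in\mathrm{span}\{\psi_{b,j}\}_{1\leq j\leq k}$ and $\mathcal{H}_{b}\psi_{b,j}=\lambda_{b,j}\psi_{b,j}$, the function $\mathcal{H}_{b}\mathrm{Mod}$ lies in the same span, which is $L_{b}^{2}$-orthogonal to $\epsilon_{2}$ by the previous step. For the profile error, Lemma~\ref{leading order modulation equations} combined with the modulation estimate $|(b_{1})_{s}|\lesssim|b_{1}|$ gives $\|\mathcal{H}_{b_{1}}\Psi\|_{L_{b_{1}}^{2}}\lesssim b_{1}^{2}$ for $k=1$, so Cauchy--Schwarz together with the bootstrap bound $\sqrt{\mathcal{E}}\lesssim|b_{1}|^{3/2}$ produces $|\langle\mathcal{H}_{b_{1}}\Psi,\epsilon_{2}\rangle_{b_{1}}|\lesssim|b_{1}|^{7/2}$; the analogous computation for $k>1$ gives $\|\mathcal{H}_{b}\Psi\|_{L_{b}^{2}}\sqrt{\mathcal{E}}\lesssim b_{k}^{2}\cdot b\leq b_{k}^{3}$ using $b\leq|b_{k}|$ from the bootstrap. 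The remaining terms carry a factor of $a-b=O(|b_{k}|)$ (from \eqref{boundaryconditions}) or $b_{s}=O(b)$ (from \eqref{hyperparameter}); after one integration by parts in $(a-b)\langle\Lambda\epsilon_{2},\epsilon_{2}\rangle_{b}$ they become either sign-favorable or of the prescribed order.

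\textbf{Main obstacle and absorption.} Certain non-sign-definite contributions, typically products of $b_{s}$ with weighted quadratic forms in $\epsilon_{2}$, can be recognized as exact $s$-derivatives of quantities of size $O(|b_{1}|^{7/2})$ (resp.\ $O(b_{k}^{3})$) and moved under $\tfrac{d}{ds}$ on the left-hand side, producing the stated correction inside the bracket. The primary difficulty throughout is the boundary behavior at $y=1$: since $\epsilon_{2}(1)\neq 0$, the coercivity cannot be applied blindly, and the argument hinges on the sharp pointwise bound for $|\epsilon_{2}(1)|$ obtained by evaluating the equation at the boundary. A secondary difficulty is bookkeeping: every $(a-b)$- or $b_{s}$-driven term must be shown to remain at the $|b_{1}|^{7/2}$ (resp.\ $b_{k}^{3}$) threshold after the various commutators and integrations by parts, which in particular uses the modulation bound $|(b_{j})_{s}|\lesssim|b_{k}|$ from Proposition~\ref{Modulation equation proposition} to avoid any degradation.
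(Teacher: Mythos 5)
Your overall architecture matches the paper's: derive the equation for $\epsilon_2=\mathcal{H}_b\epsilon$ via the commutator $[\Delta,\Lambda]=2\Delta$, use the inherited orthogonality $\langle\epsilon_2,\psi_{b,j}\rangle_b=0$ to kill the $\mathrm{Mod}$ contribution and to invoke coercivity, and estimate the $\Psi$-source by Cauchy--Schwarz; all of that bookkeeping is correct. But there is a genuine gap at the single hardest point of the proof, the boundary term $\rho_b(1)\,\epsilon_2(1)\,\partial_y\epsilon_2(1)$ produced by integrating $\langle\mathcal{H}_a\epsilon_2,\epsilon_2\rangle_b$ by parts. You bound only the factor $\epsilon_2(1)=O(b_k^2)$ and claim this is ``sharp enough to absorb the boundary contribution,'' but the other factor $\partial_y\epsilon_2(1)$ is a \emph{third-order} derivative of $\epsilon$ at $y=1$ and is not controlled by the $H^2$-level bootstrap quantity $\mathcal{E}$. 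The paper resolves this by differentiating the equation in $y$, evaluating at $y=1$ to get $\partial_y\epsilon_2(1)=-\bigl(a_s+\sum_j b_j\lambda_{b,j}\partial_y\psi_{b,j}(1)+a^2(a-b)\bigr)$, and then recognizing the product $\epsilon_2(1)\partial_y\epsilon_2(1)=a(a-b)\{(a-b)_s+O(|b_k|)\}$ as an exact $s$-derivative of an $O(|b_k|^3)$ (resp.\ $O(|b_1|^{7/2})$) quantity plus admissible errors, using the modulation estimates to control $a_s$ only through this total-derivative structure. That step is precisely why the statement has $\frac{d}{ds}\{\mathcal{E}+O(|b_k|^{3})\}$ rather than $\frac{d}{ds}\mathcal{E}$; you instead attribute the correction inside the time derivative to the term $-\tfrac{b_s}{4}\|y\epsilon_2\|_{L_b^2}^2$, which is harmless ($|b_s|\mathcal{E}$ is already of the required order) and needs no such trick.

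A secondary, more minor issue: you apply the spectral gap \eqref{spectralgap1} directly to $\epsilon_2$, but that estimate is stated for $u\in H_b^1$, i.e.\ for functions vanishing at $y=1$, whereas $\epsilon_2(1)=-(a-b)a\neq 0$. The paper first subtracts the boundary value, setting $\xi=\mathcal{H}_b\epsilon-(\mathcal{H}_b\epsilon)(1)$, re-projects onto $\{\eta_j\}$, and then applies the gap, which costs an extra $C\,|(\mathcal{H}_b\epsilon)(1)|\,\|\mathcal{H}_b\epsilon\|_{L_b^2}=O(b\,b_k^2)$ that must be carried through. You flag that ``the coercivity cannot be applied blindly,'' but the fix you offer (the pointwise bound on $\epsilon_2(1)$) is not by itself the modification of the coercivity statement that is actually needed.
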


\begin{remark}
Before proving the pointwise energy bound for $\mathcal{E}$, we briefly outline the main steps of the argument. Although the setting is different, our method is based on the approach developed in \cite{Had-Raph}.
Starting from the well-known commutator identity
\[
[\Delta, \Lambda] = 2 \Delta, \quad \text{where $[\cdot, \cdot]$ denotes the commutator},
\]
we rewrite equation \eqref{renormalised stefan} in terms of the higher-order quantity $\epsilon_2 \coloneqq \mathcal{H}_b \epsilon$. This allows us to derive an identity for the time derivative of the energy $\mathcal{E}(s)$. The spectral gap property then provides coercivity of the $L^2_b$-norm of $\epsilon_2$, which plays a key role in closing the bootstrap argument. A technical difficulty arises in estimating the boundary term involving $\partial_y \epsilon_2$, which does not vanish at $y = 1$ due to the highest-order derivative in the equation. This term involves the derivative $a_s$, but can be controlled using the boundary condition \eqref{boundaryconditions} and the modulation estimates \eqref{modulation estimates for k=1} and \eqref{modulation esitmates for k>1} as in \cite{Had-Raph}. See the estimate for $|e^{-b} \epsilon_2(1) \partial_y \epsilon_2(1)|$ in the proof of Proposition~\ref{pointwise energy control} for further details.
\end{remark}

\begin{proof}[Proof of Proposition~\ref{pointwise energy control}]
We present the proof in the case $k > 1$, since the argument for $k = 1$ follows analogously by replacing $b$ with $b_1$. The only distinction arises in handling the highest-order boundary term $\partial_y \epsilon_2(s,1)$, for which we will treat the cases $k = 1$ and $k > 1$ separately.

First, recall that $\epsilon$ satisfies \eqref{part2:2}, and define $\epsilon_2 \coloneqq \mathcal{H}_b \epsilon$, which then satisfies the equation
\[
\partial_s \epsilon_2 + \mathcal{H}_a \epsilon_2 = [\partial_s, \mathcal{H}_b] \epsilon + [\mathcal{H}_a, \mathcal{H}_b] \epsilon + \mathcal{H}_b \mathcal{F},
\]
where $\mathcal{F} \coloneqq -\text{Mod} - \Psi$. Using the algebraic identity $[\Delta, \Lambda] = 2\Delta$, we compute the commutators $[\partial_s, \mathcal{H}_b]$ and $[\mathcal{H}_a, \mathcal{H}_b]$ as in \cite{Had-Raph}.
We now compute the commutator terms. Using the commutator identity
\begin{align*}
[\partial_s, \mathcal{H}_b] \epsilon + [\mathcal{H}_a, \mathcal{H}_b] \epsilon 
&= b_s \Lambda \epsilon + [\mathcal{H}_b + (a - b) \Lambda, \mathcal{H}_b] \epsilon \\
&= b_s \Lambda \epsilon + (a - b) [\Lambda, -\Delta] \epsilon \\
&= b_s \Lambda \epsilon + 2(a - b) \Delta \epsilon \\
&= (b_s + 2b(a - b)) \Lambda \epsilon - 2(a - b)[ -\Delta \epsilon + b \Lambda \epsilon ].
\end{align*}
For convenience, we introduce the notation
\[
\Phi \coloneqq b_s + 2b(a - b).
\]
Hence, the equation satisfied by $\epsilon_2$ takes the form
\begin{equation} \label{epsilon_2eq}
\partial_s \epsilon_2 + \mathcal{H}_a \epsilon_2 = \Phi \Lambda \epsilon - 2(b - a) \epsilon_2 + \mathcal{H}_b \mathcal{F}.
\end{equation}
Now multiply $\epsilon_{2}$ on both sides of \eqref{epsilon_2eq}
and integrate with respect to the variable $y$. This gives the modified energy identities : 
\begin{equation}\label{differentiation of E}
\begin{aligned}\frac{1}{2}\frac{d}{ds}\mathcal{E} & =\frac{1}{2}\frac{d}{ds}\int_{0}^{1}\epsilon_{2}^{2}\rho_{b} y\ dy\ \\
 & =\frac{1}{2}\int_{0}^{1}\left\{ 2\epsilon_{2}\partial_{s}\epsilon_{2}\rho_{b}+\epsilon_{2}^{2}\left(-\frac{b_{s}}{2}y^{2}\right)\rho_{b}y\right\} \ dy\\
 & =-\frac{b_{s}}{4}\norm{y\epsilon_{2}}_{L_{b}^{2}}^{2}+\langle\epsilon_{2},\partial_{s}\epsilon_{2}\rangle_{b}\\
 & =-\frac{b_{s}}{4}\norm{y\epsilon_{2}}_{L_{b}^{2}}^{2}+\langle\Phi\Lambda\epsilon-2(b-a)\epsilon_{2}+\mathcal{H}_{b}\mathcal{F}-\mathcal{H}_{a}\epsilon_{2},\epsilon_{2}\rangle_{b}.
\end{aligned}
\end{equation}
On the other hand, an integration by parts yields
\begin{equation} \label{< H_a epsilon_2, epsilon_2 >_b}
\begin{aligned}
- \langle \mathcal{H}_a \epsilon_2, \epsilon_2 \rangle_b 
&= - \int_0^1 \epsilon_2 \, \mathcal{H}_a \epsilon_2 \, \rho_b y \, dy \\
&= - \int_0^1 \epsilon_2 \left( \mathcal{H}_b \epsilon_2 + (a - b) \Lambda \epsilon_2 \right) \rho_b y \, dy \\
&= \int_0^1 \partial_y (\rho_b y \partial_y \epsilon_2) \epsilon_2 \, dy 
+ (b - a) \int_0^1 \epsilon_2 \, y \partial_y \epsilon_2 \, \rho_b y \, dy \\
&= [\rho_b y (\partial_y \epsilon_2) \epsilon_2 ]_0^1 
- \int_0^1 |\partial_y \epsilon_2|^2 \, \rho_b y \, dy 
+ (b - a) \langle \epsilon_2, \Lambda \epsilon_2 \rangle_b \\
&= \rho_b(1) \, \partial_y \epsilon_2(1) \, \epsilon_2(1) 
- \| \partial_y \epsilon_2 \|_{L_b^2}^2 
+ (b - a) \langle \epsilon_2, \Lambda \epsilon_2 \rangle_b.
\end{aligned}
\end{equation}
Injecting \eqref{< H_a epsilon_2, epsilon_2 >_b} into \eqref{differentiation of E}, we obtain
\begin{equation} \label{derivative of E}
\begin{aligned}
\frac{1}{2} \frac{d}{ds} \mathcal{E} = 
& - \| \partial_y \epsilon_2 \|_{L_b^2}^2 
+ (b - a) \langle \epsilon_2, \Lambda \epsilon_2 \rangle_b 
+ \rho_b(1) \, \partial_y \epsilon_2(1) \, \epsilon_2(1) \\
& - \frac{b_s}{4} \| y \epsilon_2 \|_{L_b^2}^2 
+ \Phi \langle \Lambda \epsilon, \epsilon_2 \rangle_b 
- 2(b - a) \| \epsilon_2 \|_{L_b^2}^2 
+ \langle \mathcal{H}_b \mathcal{F}, \epsilon_2 \rangle_b.
\end{aligned}
\end{equation}
We now estimate each term on the right-hand side of \eqref{derivative of E}.

\textbf{1.} Estimates for $|(\mathcal{H}_{b}\epsilon)(1)|$: Since $v$ is a solution to \eqref{renormalised stefan}, we have
\[
(\partial_s + \mathcal{H}_a) v(s, y) = 0 \quad \text{for } |y| \leq 1.
\]
Moreover, using the boundary condition $v(s,1) = 0$, it follows that $\partial_s v(s,1) = 0$. Evaluating the equation at the boundary point $y = 1$, we obtain
\begin{equation}\label{H_b epsilon(s,1) = -a(a-b)}
0 = (\mathcal{H}_a v)(s,1) = (\mathcal{H}_b + (a - b)\Lambda)v(s,1) = (\mathcal{H}_b \epsilon)(s,1) + (a - b)a.
\end{equation}
Therefore, by the asymptotic relations in \eqref{asymptotic for a, k=1} and \eqref{order of modulation parameter for k>1}, we conclude that
\begin{equation} \label{estimates for H_b(1)}
\begin{aligned}
|(\mathcal{H}_{b_1} \epsilon)(1)| &\lesssim b_1^2 \quad \text{for } k = 1, \\
|(\mathcal{H}_{b} \epsilon)(1)| &\lesssim b_k^2 \quad \text{for } k > 1.
\end{aligned}
\end{equation}

\textbf{2.} Estimates for $\norm{\partial_{y}\epsilon_{2}}_{L_{b}^{2}}^{2}$
: Define $\xi, \ \zeta \in H_0^1(B_1(0))$ by
\[
\xi(y) \coloneqq (\mathcal{H}_b \epsilon)(y) - (\mathcal{H}_b \epsilon)(1), \quad \text{for } |y| \leq 1,
\]
\[
\zeta(y) \coloneqq \xi(y) - \sum_{j=1}^{k} \frac{\langle \xi, \eta_j \rangle_0}{\langle \eta_j, \eta_j \rangle_0} \eta_j.
\]
Since $\langle \zeta, \eta_j \rangle_0 = 0$ for $1 \leq j \leq k$, the spectral gap property for $\{ \eta_j \}$ in \eqref{spectralgapofeta} yields
\begin{equation} \label{part3spec1}
\int_0^1 |\partial_y \zeta|^2 \rho_b y \, dy \geq (\lambda_{k+1} + O(|b|)) \int_0^1 |\zeta|^2 \rho_b y \, dy.
\end{equation}
On the other hand, from the orthogonality conditions on $\epsilon$ in \eqref{profile set up k=1} and \eqref{profile set up k>1}, we have $\langle \psi_{b,j}, \epsilon \rangle_b = 0$. Hence,
\[
\langle \mathcal{H}_b \epsilon, \psi_{b,j} \rangle_b = 0,
\]
and from \eqref{decompose of psi_{b,k}}, \eqref{value of mu_{b,jk}}, and \eqref{estimates for tilde{phi}_{b,k}}, we know that
\[
\| \psi_{b,j} - \eta_j \|_{H_b^1} \lesssim |b|.
\]
Thus, for $1 \leq j \leq k$, and possibly after reducing $b^*(k) > 0$ while keeping the same notation, we obtain the estimate for all $|b| < b^*(k)$:
\[
|\langle \xi, \eta_j \rangle_0| 
\lesssim |b| \| \xi \|_{L_b^2} + |\langle \psi_{b,j}, \xi \rangle_b| 
\lesssim |b| \| \xi \|_{L_b^2} + |(\mathcal{H}_b \epsilon)(1)|.
\]
Consequently, we deduce
\begin{equation}\label{H^1 norm of xi-zeta estimate}
\| \xi - \zeta \|_{H_b^1} 
\leq \sum_{j=1}^{k} |\langle \xi, \eta_j \rangle_0| \cdot \| \eta_j \|_{H_0^1}
\lesssim_k |b| \| \xi \|_{L_b^2} + |(\mathcal{H}_b \epsilon)(1)|.
\end{equation}
Applying \eqref{H^1 norm of xi-zeta estimate} to \eqref{part3spec1} and possibly after reducing $b^*(k) > 0$ while keeping the same notation, we estimate
\begin{equation} \label{energy1}
\begin{aligned}
\| \partial_y\xi \|_{L_b^2} 
&\geq - \| \partial_y(\xi - \zeta) \|_{L_b^2} + \| \partial_y\zeta \|_{L_b^2} \\
&\geq (\lambda_{k+1} + O(|b|))^{1/2} \| \zeta \|_{L_b^2} 
- C_{k,1} \left( |b| \| \xi \|_{L_b^2} + |(\mathcal{H}_b \epsilon)(1)| \right) \\
&\geq (\lambda_{k+1} + O(|b|))^{1/2} \| \xi \|_{L_b^2} 
- C_{k,2} |b| \| \xi \|_{L_b^2} 
- C_{k,3} |(\mathcal{H}_b \epsilon)(1)| \\
&= (\lambda_{k+1} + O(|b|))^{1/2} \| \mathcal{H}_b \epsilon \|_{L_b^2} 
- C_{k,4} |(\mathcal{H}_b \epsilon)(1)|.
\end{aligned}
\end{equation}
for some constants $C_{k,1}, C_{k,2}, C_{k,3}, C_{k,4} > 0$.

Now, using the bootstrap bound on $\mathcal{E}$ in \eqref{bootstrap assumption for k=1} for $k=1$ and \eqref{bootstrap assumption for k>1} for $k>1$, the pointwise estimate in \eqref{estimates for H_b(1)}, and the decay estimate $b_k(s) = O(e^{-\lambda_k s})$ from Proposition~\ref{Modulation equation proposition}, we see that the right-hand side of \eqref{energy1} is non-negative. Squaring both sides of \eqref{energy1}, we deduce
\[
\| \partial_y \mathcal{H}_b \epsilon \|_{L_b^2}^2 
\geq (\lambda_{k+1} + O(|b|)) \| \mathcal{H}_b \epsilon \|_{L_b^2}^2 
- C |(\mathcal{H}_b \epsilon)(1)| \cdot \| \mathcal{H}_b \epsilon \|_{L_b^2}
\]
for some constant $C > 0$. Therefore, we conclude the following estimate:
\begin{equation} \label{estimates for d_y H_b e}
\begin{aligned}
\| \partial_y \mathcal{H}_{b_1} \epsilon \|_{L_{b_1}^2}^2 
&\geq (\lambda_2 + O(|b_1|)) \mathcal{E} + O(|b_1|^{7/2}), \quad &&\text{for } k = 1, \\
\| \partial_y \mathcal{H}_b \epsilon \|_{L_b^2}^2 
&\geq (\lambda_{k+1} + O(|b|)) \mathcal{E} + O(b b_k^2), \quad &&\text{for } k > 1.
\end{aligned}
\end{equation}

\textbf{3.} Estimates for $\langle\epsilon_{2},\Lambda\epsilon_{2}\rangle_{b}$
: Integration by parts yields
\begin{align*}
\langle \epsilon_2, \Lambda \epsilon_2 \rangle_b 
&= \int_0^1 \epsilon_2 \, y \partial_y \epsilon_2 \, \rho_b y \, dy \\
&= \left[ \frac{1}{2} |\epsilon_2(y)|^2 \rho_b(y) y^2 \right]_{0}^{1} 
    - \frac{1}{2} \int_0^1 |\epsilon_2(y)|^2 \left( 2y + y^2(-b y) \right) \rho_b(y) \, dy.
\end{align*}
Applying the Cauchy--Schwarz inequality, we obtain
\[
|\langle \epsilon_2, \Lambda \epsilon_2 \rangle_b| 
\lesssim |\epsilon_2(1)|^2 + \| \epsilon_2 \|_{L_b^2}^2.
\]
Now, using the bootstrap assumptions on $\mathcal{E}$ in \eqref{bootstrap assumption for k=1} for $k = 1$ and in \eqref{bootstrap assumption for k>1} for $k > 1$, together with the pointwise estimate \eqref{estimates for H_b(1)}, we deduce
\begin{equation} \label{estimates for <e, Lam e>}
\begin{aligned}
|\langle \epsilon_2, \Lambda \epsilon_2 \rangle_{b_1}| &\lesssim b_1^4 + b_1^3, \quad &&\text{for } k = 1, \\
|\langle \epsilon_2, \Lambda \epsilon_2 \rangle_b| &\lesssim b_k^4 + b^2, \quad &&\text{for } k > 1.
\end{aligned}
\end{equation}

\textbf{4.} Estimates for $\langle\mathcal{H}_{b}\mathcal{F},\epsilon_{2}\rangle_{b}$
: Recall that $\mathcal{F} = -\text{Mod} - \Psi$. Using the self-adjointness of $\mathcal{H}_b$ in $H_b^1$, the orthogonality conditions on $\epsilon$ from \eqref{profile set up k=1} and the profile error estimate \eqref{profile error estimate for k=1} in the case $k = 1$, and from \eqref{profile set up k>1} and \eqref{profile error estimate for k>1} in the case $k > 1$, we obtain the following estimates:
\begin{equation} \label{estimatesfor<H_{b}F,e_{2}>}
\begin{aligned}
|\langle \mathcal{H}_{b_1} \mathcal{F}, \epsilon_2 \rangle_{b_1}| 
&= |\langle \mathcal{H}_{b_1} \Psi, \epsilon_2 \rangle_{b_1}| 
\lesssim |b_1|^{7/2}, \quad &&\text{for } k = 1, \\
|\langle \mathcal{H}_b \mathcal{F}, \epsilon_2 \rangle_b| 
&\lesssim b b_k^2, \quad &&\text{for } k > 1.
\end{aligned}
\end{equation}

\textbf{5.} Estimates for $|e^{-b}\epsilon_{2}(1)\partial_{y}\epsilon_{2}(1)|$
: From the boundary condition $a = \partial_y v(s,1)$, we differentiate in time to obtain
\[
a_s = \partial_s \partial_y v(s,1).
\]
On the other hand, differentiating equation \eqref{renormalised stefan} with respect to $y$ yields
\begin{equation} \label{highest order eq}
0 = \partial_s \partial_y v + \partial_y\left( \mathcal{H}_b + (a - b)\Lambda \right)v 
= \partial_s \partial_y v + \partial_y \epsilon_2 + \partial_y \mathcal{H}_b Q_{\beta} + (a - b) y \Delta v.
\end{equation}
Evaluating \eqref{highest order eq} at $y = 1$, we use the boundary conditions: $\partial_s v(s,1) = 0$, $\partial_y v(s,1) = a$, and the fact that $v$ solves \eqref{renormalised stefan}, which implies $(\mathcal{H}_a v)(s,1) = 0$. Hence,
\[
\Delta v(s,1) = a \partial_y v(s,1) = a^2.
\]
Thus, we obtain the following expression for the boundary derivative:
\begin{equation} \label{a_s estimate}
\partial_y \epsilon_2(1) = -\begin{cases}
a_s + b_1 \lambda_{b_1,1} \partial_y \psi_{b_1,1}(1) + a^2(a - b_1), & \text{for } k = 1, \\
a_s + \sum_{j=1}^k b_j \lambda_{b,j} \partial_y \psi_{b,j}(1) + a^2(a - b), & \text{for } k > 1.
\end{cases}
\end{equation}
The term $a_s$ is delicate, as it involves a third-order differentiation at the boundary. However, it can be controlled using the modulation estimates from \eqref{modulation estimates for k=1} and \eqref{modulation esitmates for k>1}.

For $k=1$ : From \eqref{a_s estimate}, \eqref{modulation estimates for k=1}, \eqref{asymptotic for a, k=1}, and \eqref{differentiation of psi_{b,j} at y=1}, we obtain
\begin{align*}
\partial_y \epsilon_2(s,1) &= -a_s - \lambda_{b_1,1} b_1 \partial_y \psi_{b_1,1}(1) - a^2(a - b_1) \\
&= -\left(a + \sqrt{2\lambda_1} b_1 \right)_s + \sqrt{2\lambda_1} \left((b_1)_s + \lambda_{b_1,1} b_1 \right) - a^2(a - b_1) \\
&= -\left(a + \sqrt{2\lambda_1} b_1 \right)_s + O(b_1^2).
\end{align*}
Thus, using \eqref{H_b epsilon(s,1) = -a(a-b)} and \eqref{differentiation of psi_{b,j} at y=1}, we estimate
\begin{align*}
&e^{-b_1(s)} \epsilon_2(s,1) \partial_y \epsilon_2(s,1)\\
&= e^{-b_1} a(a - b_1) \left\{ \left(a + \sqrt{2\lambda_1} b_1 \right)_s + O(b_1^2) \right\} \\
&= e^{-b_1} (a + \sqrt{2\lambda_1} b_1 - b_1 - \sqrt{2\lambda_1} b_1)(a + \sqrt{2\lambda_1} b_1 - \sqrt{2\lambda_1} b_1) \\
&\quad \times \left\{ \left(a + \sqrt{2\lambda_1} b_1 \right)_s + O(b_1^2) \right\} \\
&= \frac{d}{ds} \left\{ e^{-b_1} \frac{(a + \sqrt{2\lambda_1} b_1)^3}{3} 
- e^{-b_1} (1 + 2\sqrt{2\lambda_1}) b_1 \frac{(a + \sqrt{2\lambda_1} b_1)^2}{2} \right. \\
&\quad \left. - e^{-b_1} (1 + \sqrt{2\lambda_1}) \sqrt{2\lambda_1} b_1^2 (a + \sqrt{2\lambda_1} b_1) \right\} + e^{-b_1} (b_1)_s \frac{(a + \sqrt{2\lambda_1} b_1)^3}{3} \\
&\quad + (1 + 2\sqrt{2\lambda_1}) e^{-b_1} \frac{(a + \sqrt{2\lambda_1} b_1)^2}{2} \left\{ (b_1)_s - b_1 (b_1)_s \right\} \\
&\quad + (1 + \sqrt{2\lambda_1}) \sqrt{2\lambda_1} e^{-b_1} \left\{ 2b_1 (b_1)_s - b_1^2 (b_1)_s \right\} (a + \sqrt{2\lambda_1} b_1) \\
&= \frac{d}{ds} \left\{ O(|b_1|^{7/2}) \right\} + O(|b_1|^{7/2}),
\end{align*}
after possibly reducing $b^*(1) > 0$ while maintaining the same notation. The last equality follows from the decay $|(b_1)_s| \lesssim |b_1|$ from \eqref{modulation estimates for k=1}, which implies $b_1(s) = O(e^{-\lambda_1 s})$, together with the boundary estimate \eqref{boundaryconditions}:
\[
|a + \sqrt{2\lambda_1} b_1| \lesssim |b_1|^{3/2}.
\]
For $k>1$: from \eqref{a_s estimate} and \eqref{order of modulation parameter for k>1}, we estimate
\begin{align*}
e^{-b} \, \partial_y \epsilon_2(1) \, \epsilon_2(1) 
&= e^{-b} a(a - b) \left\{ a_s + \sum_{j=1}^k \lambda_{b,j} b_j \, \partial_y \psi_{b,j}(1) + a^2(a - b) \right\} \\
&= e^{-b} a(a - b) \left\{ (a - b)_s + O(|b_k|) \right\} \\
&= e^{-b} \left\{ (a - b)^2 (a - b)_s + b(a - b)(a - b)_s \right\} + O(|b_k|^3) \\
&= \frac{d}{ds} \left\{ e^{-b} \frac{(a - b)^3}{3} + e^{-b} \frac{b(a - b)^2}{2} \right\} \\
&\quad + e^{-b} b_s \frac{(a - b)^3}{3} + e^{-b} \frac{(a - b)^2}{2}(b b_s - b_s) + O(|b_k|^3) \\
&= \frac{d}{ds} \left\{ O(|b_k|^3) \right\} + O(|b_k|^3),
\end{align*}
where in the last equality, we used the bounds $|b_s| \lesssim b$ and $|a - b| \lesssim |b_k|$ from \eqref{hyperparameter} and \eqref{order of modulation parameter for k>1}.
Therefore, we conclude
\begin{align} \label{estimates for e_2(1)d_ye(1)}
e^{-b} \epsilon_2(1) \, \partial_y \epsilon_2(1) = 
\begin{cases}
\displaystyle \frac{d}{ds} \left\{ O(|b_1|^{7/2}) \right\} + O(|b_1|^{7/2}), & \text{for } k = 1, \\
\displaystyle \frac{d}{ds} \left\{ O(|b_k|^3) \right\} + O(|b_k|^3), & \text{for } k > 1.
\end{cases}
\end{align}

\textbf{6.} Estimates for $\Phi\langle\Lambda\epsilon,\epsilon_{2}\rangle_{b}$: Since $|(b_1)_s| \lesssim |b_1|$ from \eqref{modulation estimates for k=1} and $|b_s| \lesssim b$ from \eqref{hyperparameter}, and using the bounds of \eqref{asymptotic for a, k=1} and \eqref{order of modulation parameter for k>1}, we deduce that for each $k \in \mathbb{N}$,
\[
|\Phi| \lesssim |b|,
\]
where we set $b = b_1$ when $k = 1$.

Furthermore, using the bootstrap bounds on $\mathcal{E}$ from \eqref{bootstrap assumption for k=1} for $k = 1$ and \eqref{bootstrap assumption for k>1} for $k > 1$, we estimate
\begin{equation} \label{estimates for innoflambdaep}
\begin{aligned}
|\Phi \langle \Lambda \epsilon, \epsilon_2 \rangle_{b_1}| 
&\lesssim |b_1| \| \Lambda\epsilon \|_{{L}^2_{b_1}} \sqrt{\mathcal{E}} \lesssim b_1^4, \quad &&\text{for } k = 1, \\
|\Phi \langle \Lambda \epsilon, \epsilon_2 \rangle_{b}| 
&\lesssim |b| \| \Lambda\epsilon \|_{L^2_b} \sqrt{\mathcal{E}} \lesssim b^3, \quad &&\text{for } k > 1.
\end{aligned}
\end{equation}

Therefore, substituting the estimates from \eqref{estimates for H_b(1)},
\eqref{estimates for d_y H_b e}, \eqref{estimates for <e, Lam e>},
\eqref{estimatesfor<H_{b}F,e_{2}>}, \eqref{estimates for e_2(1)d_ye(1)},
and \eqref{estimates for innoflambdaep} into the energy identity \eqref{derivative of E},
we obtain the desired pointwise bound on the energy $\mathcal{E}(s)$ for each $k \in \mathbb{N}$. This completes the proof of Proposition~\ref{pointwise energy control}.
\end{proof}

We are now in a position to close the bootstrap argument. To this end, we apply a continuity argument to complete the proof of Proposition~\ref{bootstrap assumptions}. The pointwise energy estimates established in Proposition~\ref{pointwise energy control} serve as a key input, allowing us to propagate the bootstrap bounds globally in the variable $s$.

\begin{proof}[Proof of Proposition~\ref{bootstrap assumptions}]
\textbf{1.} For $k=1$: From the modulation equation for $k = 1$ given in \eqref{modulation estimates for k=1}, we have
\[
\left| (b_1)_s + \lambda_1 b_1 + \sqrt{2\lambda_1} b_1^2 \right| \lesssim |b_1|^{5/2}.
\]
Possibly after reducing $b^*(1) > 0$, while keeping the same notation, this implies the exponential bound
\begin{equation}\label{b_1 bootstrap}
|b_1(s)| \leq 2b^*(1) e^{-\lambda_1 s}.
\end{equation}
Recall from the energy estimate \eqref{Energyestimatek=1} for $k = 1$ that
\[
\frac{1}{2} \frac{d}{ds} \left( \mathcal{E} + O(|b_1|^{7/2}) \right) + \lambda_2 \left( \mathcal{E} + O(|b_1|^{7/2}) \right) \lesssim |b_1|^{7/2}.
\]
Integrating this differential inequality in time from \( s = 0 \) to \( s \), we obtain
\begin{align*}
\mathcal{E}(s) + O(|b_1(s)|^{7/2}) 
&\leq \left( \mathcal{E}(0) + O(|b_1(0)|^{7/2}) \right) e^{-2\lambda_2 s} \\
&\quad + C e^{-2\lambda_2 s} \int_0^s (b^*)^{7/2} e^{\left(-\frac{7}{2}\lambda_1 + 2\lambda_2\right)\tau} \, d\tau,
\end{align*}
for some constant $C > 0$. Since $3\lambda_1 < 2\lambda_2$, the exponent in the integral is negative. Therefore, by possibly choosing $b^*(1) > 0$ even smaller and letting $C_1 > 0$ be sufficiently large, we deduce
\[
\mathcal{E}(s) \leq \left( (b^*(1))^{1/2}\tilde{C} + 1 \right) b_1^3(s) \leq \frac{C_1}{2} b_1^3(s),
\]
where $\tilde{C}>0$ is a large universal constant. This closes the bootstrap bound for $\mathcal{E}$ in the case $k = 1$, completing the continuity argument.

\textbf{2.} For $k>1$ : 
Similarly, we obtain the asymptotic behavior of $b_k(s)$ from the modulation estimates for $k > 1$ given in \eqref{modulation esitmates for k>1}:
\[
\left|(b_k)_s + \lambda_k b_k + (-1)^{k+1} \sqrt{2\lambda_k} \, b_k^2\right| \lesssim b b_k.
\]
Possibly after reducing $b^*(k) > 0$ (while keeping the same notation), this yields the exponential bound
\begin{equation}\label{b_k bootstrap}
|b_k(s)| \leq 2 b^*(k) e^{-\lambda_k s}.
\end{equation}
Now, from the energy bound \eqref{Energyestimatek>1}, we have
\[
\frac{1}{2} \frac{d}{ds} \left( \mathcal{E} + O(|b_k|^3) \right) + \lambda_{k+1} \mathcal{E} \lesssim b_k^3.
\]
Integrating in time from $0$ to $s$ yields
\begin{align*}
\mathcal{E}(s) &\leq \left\{ \mathcal{E}(0) + O(|b_k(0)|^3) \right\} e^{-2\lambda_{k+1}s} \\
&\quad + C (b^*(k))^3 e^{-2\lambda_{k+1}s} \int_0^s e^{(-3\lambda_k + 2\lambda_{k+1})\tau} \, d\tau + O(|b_k(s)|^3),
\end{align*}
for some constant $C > 0$. Then, possibly after further reducing $b^*(k) > 0$, and choosing $C_k$ sufficiently large, we obtain
\[
\mathcal{E}(s) \leq \left( b^*(k) \tilde{C} + 1 \right) b^2(s) \leq \frac{C_k}{2} b^2(s),
\]
where $\tilde{C} > 0$ is a large universal constant. This closes the bootstrap estimate for $\mathcal{E}$ in the case $k > 1$.

\begin{remark}\label{motivation for introduce b(s)}
We emphasize the necessity of introducing the auxiliary parameter $b(s)$ in \eqref{hyperparameter} for the case $k > 1$. For sufficiently large $k$, the eigenvalues of the radial Dirichlet Laplacian satisfy $3\lambda_k > 2\lambda_{k+1}$, which prevents us from closing the energy bootstrap argument using the same strategy as in the case $k=1$. To avoid this, we introduce $b(s)$ as a comparison parameter slightly smaller than $b_k(s)$, tailored to each $k>1$. This adjustment ensures the integrability of error terms in the energy inequality and allows the continuity argument to be implemented.
\end{remark}

We now prove the existence of initial conditions for $(b_j(0))_{1 \leq j \leq k-1}$ that ensure the trapping condition $|b_j(s)| \lesssim b(s)$ for all $1 \leq j \leq k-1$, via a topological argument.

From the modulation estimates for $k>1$ in \eqref{modulation esitmates for k>1} and the exponential decay \eqref{b_k bootstrap}, we obtain
\[
\sum_{j=1}^{k-1} \left| (b_j)_s + \lambda_j b_j \right| \leq C (b^*(k))^2 e^{-2\lambda_k s}
\]
for some universal constant $C > 0$. 

Injecting the definition of $V_j$ from \eqref{def of V_j} into the evolution equation for $b_j(s)$, we deduce
\begin{equation*}
|V_j'(s) + (\eta_k + \lambda_j - \lambda_k) V_j(s)| \leq C (b^*(k))^2 e^{-\lambda_k s}, \quad \text{for } 1 \leq j \leq k-1.
\end{equation*}

To close the bootstrap estimate for $V_j$, we proceed by contradiction using the Brouwer Fixed Point Theorem. Suppose that for every choice of initial data $\{V_j(0)\}_{1 \leq j \leq k-1}$ in the ball of radius $D_k$, the maximal time $s^*$ for which the bootstrap assumption holds is finite. That is, there exists $s^* < \infty$ such that
\begin{equation*}
\sum_{j=1}^{k-1} |V_j(s^*)|^2 = D_k^2.
\end{equation*}

Since we chose $\eta_k > 0$ so that $\lambda_k - \lambda_j - \eta_k > \frac{1}{2}$ for all $1 \leq j \leq k-1$ (which is possible by the definition of $\eta_k$ in \eqref{def of V_j}), we differentiate and estimate the growth of $\sum_{j=1}^{k-1} |V_j(s)|^2$ at $s = s^*$:
\begin{align*}
\frac{1}{2} \frac{d}{ds} \left( \sum_{j=1}^{k-1} |V_j|^2 \right)(s^*) 
&= \sum_{j=1}^{k-1} V_j'(s^*) V_j(s^*) \\
&= \sum_{j=1}^{k-1} \left\{ (\lambda_k - \lambda_j - \eta_k) V_j^2(s^*) + C V_j(s^*) (b^*(k))^2 e^{-\lambda_k s^*} \right\} \\
&\geq \left( \frac{1}{2} D_k^2 - C (b*(k))^2 D_k \right).
\end{align*}

Now, by choosing $D_k>0$ sufficiently large so that $C (b^*(k))^2 < D_k / 4$, the time derivative becomes strictly positive. This implies that $s^*$ is indeed an exit time, contradicting the assumption of maximality.

Next, we define a continuous map 
\[
T: B_{D_k}(0) \subset \mathbb{R}^{k-1} \to B_{D_k}(0), \quad \tilde{V}(0) \mapsto \tilde{V}(s^*(\tilde{V}(0))) := (V_1(s^*), \ldots, V_{k-1}(s^*)),
\]
which maps the initial data to the exit point in the boundary sphere $D_k \mathbb{S}^{k-1}$.

By construction, $T$ is continuous and equals the identity on the boundary $\partial B_{D_k}(0) = D_k \mathbb{S}^{k-1}$, contradicting the Brouwer Fixed Point Theorem. Therefore, there must exist initial data $\{V_j(0)\}_{1 \leq j \leq k-1}$ such that the bootstrap is maintained globally in the variable $s$, completing the proof.

\end{proof}

\subsection{Proof of Theorem\ref{main theorem}}\label{main theorem proof}
We are now in a position to prove Theorem~\ref{main theorem}. In this final step, we will choose suitable initial data and integrate the relation $-\lambda_s / \lambda$ in time to derive the precise asymptotic behavior of $\lambda(s)$, thereby completing the proof of the main result. 

\begin{proof}[Proof of Theorem~\ref{main theorem}]
We now consider initial data $v_0$ for the renormalized equation~\eqref{renormalised stefan} at $s=0$.
For $k=1$, we take $v_0$ of the form given by~\eqref{profile set up k=1} at $s=0$, with the initial condition satisfying~\eqref{initial conditions for k=1}. For $k > 1$, we consider $v_0$ of the form given by~\eqref{profile set up k>1} at $s=0$, with initial condition satisfying~\eqref{initial conditions for k>1}, and where the coefficients $(b_j(0))_{1 \leq j \leq k-1}$ are chosen to satisfy the trapping condition~\eqref{bootstrap assumption for k>1}.
These choices ensure that the solution satisfies the assumptions of Proposition~\ref{bootstrap assumptions}. Based on this, we derive the asymptotic behavior of $\lambda(t)$ as $t \to \infty$, and show how the solution exhibits either melting or freezing dynamics depending on the parity of $k \in \mathbb{N}$ and the sign of the initial value $b_k(0)$.

For each $k \in \mathbb{N}$, we define a function $g_k : [0, \infty) \to \mathbb{R}$ by
\begin{equation} \label{proof of thm: def of g_k'}
    g_k'(s) \coloneqq \frac{(b_k)_s}{(-1)^{k+1} \sqrt{2\lambda_k} b_k^2 + \lambda_k b_k} + 1,
\end{equation}
with initial condition
\[
g_k(0) = \frac{1}{\lambda_k} \log\left| \frac{\sqrt{2\lambda_k} b_k(0)}{(-1)^{k+1} \sqrt{2\lambda_k} b_k(0) + \lambda_k} \right|.
\]

We also introduce a unified notation for the error terms in the modulation estimates, $c_k : [0, \infty) \to \mathbb{R}$, defined by
\[
c_k(s) \coloneqq \begin{cases}
e^{-\frac{3}{2} \lambda_1 s}, & \text{for } k = 1, \\
\frac{e^{-\lambda_k s}}{s + 1}, & \text{for } k > 1.
\end{cases}
\]

Then, using the modulation estimates for $b_k$, given by \eqref{modulation estimates for k=1} when $k = 1$ and \eqref{modulation esitmates for k>1} when $k > 1$, we obtain the bound
\[
|g_k'(s)| \lesssim c_k(s), \quad \text{for all } s \in [0, \infty),
\]
for each $k \in \mathbb{N}$. Since $c_k \in L^1([0, \infty))$, this implies that $g_k'$ is integrable over $[0, \infty)$, and thus by the Fundamental Theorem of Calculus, the limit
\[
\lim_{s \to \infty} g_k(s) = L_0 \in \mathbb{R}
\]
exists. Consequently, we deduce that there exists a non-zero constant $c \in \mathbb{R}$ such that
\[
\lim_{s \to \infty} e^{\lambda_k s} b_k(s) = c.
\]
Indeed, by integrating both sides of \eqref{proof of thm: def of g_k'} from $0$ to $s$, we obtain
\[
g_k(s) = \frac{1}{\lambda_k} \log\left| \frac{\sqrt{2\lambda_k} e^{\lambda_k s} b_k(s)}{(-1)^{k+1} \sqrt{2\lambda_k} b_k(s) + \lambda_k} \right|.
\]
Since $\lim_{s \to \infty} g_k(s) \in \mathbb{R}$ and $\lim_{s \to \infty} b_k(s) = 0$ from \eqref{b_1 bootstrap} and \eqref{b_k bootstrap}, the limit above implies
\[
\lim_{s \to \infty} e^{\lambda_k s} b_k(s) = c \neq 0,
\]
as claimed. Therefore, we conclude that
\[
L_0 = \frac{1}{\lambda_k} \log\left| \frac{c\sqrt{2\lambda_k}}{\lambda_k} \right|,
\]
and hence
\begin{equation}\label{thm1.1 : eq1}
    \frac{|\sqrt{2\lambda_{k}}b_{k}(s)|}{(-1)^{k+1}\sqrt{2\lambda_{k}}b_{k}(s)+\lambda_{k}} 
    = \left|\frac{c}{\sqrt{\lambda_k/2}}\right| e^{-\lambda_k s + \lambda_k (g_k(s) - L_0)}.
\end{equation}
From the bootstrap bounds \eqref{b_1 bootstrap} for $k=1$ and \eqref{b_k bootstrap} for $k > 1$, we further have
\[
\left| \frac{c}{\sqrt{\lambda_k/2}} \right| \leq \frac{2b^*(k)}{\sqrt{\lambda_k/2}} \ll 1.
\]

On the other hand, we define a function $h_k : [0, \infty) \rightarrow \mathbb{R}$ by
\begin{equation}\label{proof of thm: def of h_k'}
    h_k'(s) \coloneqq a(s) - (-1)^{k+1}\sqrt{2\lambda_k}b_k(s),
\end{equation}
with initial condition $h_k(0) = 0$. From the boundary condition \eqref{boundaryconditions}, we deduce the bound
\[
    |h_k'(s)| \lesssim c_k(s), \quad \text{for all } s \in [0, \infty).
\]
Hence, $h_k'$ is integrable on $[0, \infty)$, and by the Fundamental Theorem of Calculus, there exists a constant $L_1 \in \mathbb{R}$ such that
\[
    \lim_{s \to \infty} h_k(s) = L_1.
\]
Since $a = -\lambda_s/\lambda$, by integrating both sides of \eqref{proof of thm: def of h_k'} from $0$ to $s$, we obtain
\[
    h_k(s) = -\log \lambda(s) + (-1)^{k+1}\sqrt{2\lambda_k} \int_0^s b_k(\tau) \, d\tau.
\]
Because $b_k$ is integrable on $[0, \infty)$ by \eqref{b_1 bootstrap} for $k = 1$ and \eqref{b_k bootstrap} for $k > 1$, there exists a limit $\lambda_{\infty} > 0$ such that $\lim_{s \to \infty} \lambda(s) = \lambda_{\infty}$. Consequently, we obtain
\[
    L_1 = -\log \lambda_{\infty} + (-1)^{k+1}\sqrt{2\lambda_k} \int_0^{\infty} b_k(\tau) \, d\tau,
\]
and therefore,
\begin{equation}\label{thm1.1 : eq2}
    \log \lambda(s) = \log \lambda_{\infty} + (-1)^k \sqrt{2\lambda_k} \int_s^{\infty} b_k(\tau) \, d\tau + (L_1 - h_k(s)).
\end{equation}

Note that both $|L_1 - h_k(s)|$ and $|g_k(s) - L_0|$ are of order $O\left(\int_s^{\infty} c_k(\tau)\, d\tau\right)$. 
For $k = 1$, since $c_1(s) = e^{-\frac{3}{2} \lambda_1 s}$, we directly compute
\[
\int_s^{\infty} c_1(\tau)\, d\tau = \int_s^{\infty} e^{-\frac{3}{2} \lambda_1 \tau}\, d\tau = O(c_1(s)).
\]
For $k > 1$, since $c_k(s) = \frac{e^{-\lambda_k s}}{s+1}$, an integration by parts gives
\[
\int_s^{\infty} \frac{e^{-\lambda_k \tau}}{\tau+1}\, d\tau = \frac{1}{\lambda_k} \cdot \frac{e^{-\lambda_k s}}{s+1} - \frac{1}{\lambda_k} \int_s^{\infty} \frac{e^{-\lambda_k \tau}}{(\tau+1)^2}\, d\tau = O(c_k(s)).
\]
Therefore, for each $k \in \mathbb{N}$, we obtain the estimate
\begin{equation}\label{integration of c_k}
    \int_s^{\infty} c_k(\tau)\, d\tau = O(c_k(s)).
\end{equation}

We now specify the initial value $b_k(0)$, thus determining the initial data for the renormalized equation \eqref{renormalised stefan}, for each $k \in \mathbb{N}$. As observed in \eqref{thm1.1 : eq1} and \eqref{thm1.1 : eq2}, the long-term behavior of the radius function $\lambda(s)$ is governed by the index $k$ and the sign of $b_k(0)$. More precisely, the solution lies in the \emph{melting regime} if either $k$ is odd and $b_k(0) > 0$, or $k$ is even and $b_k(0) < 0$. In all other cases, the solution lies in \emph{freezing regime}. This classification is determined by the sign of the exponential correction term in the asymptotic expansion of $\lambda(t)$. In what follows, we compute the asymptotic behavior of $\lambda(t)$ in each regime separately. We fix $b_{k}(0)$ to be $|b_{k}(0)| \leq b^*(k)$.

\textbf{1.} \emph{Melting regime}:
For $k \in 2\mathbb{N} - 1$, we choose $b_k(0) > 0$, and for $k \in 2\mathbb{N}$, we choose $b_k(0) < 0$. Then, from the asymptotic law for $b_k$ in \eqref{thm1.1 : eq1} and the estimate \eqref{integration of c_k}, we obtain
\[
b_k(s) = (-1)^{k+1} \frac{\sqrt{\lambda_k/2}}{-1 + C e^{\lambda_k s + O(c_k(s))}},
\]
where we denote $C \coloneqq \left|\frac{\sqrt{\lambda_k/2}}{c}\right| \gg 1$. Integrating $b_k$ from $s$ to $\infty$, we have
\begin{equation} \label{meltinglaw}
\begin{aligned}
    \int_{s}^{\infty} b_k(\tau) \, d\tau 
    &= (-1)^{k+1} \sqrt{\frac{\lambda_k}{2}} \int_{s}^{\infty} \left( \frac{1}{C e^{\lambda_k \tau} - 1} + O(c_k(\tau)) \right) d\tau \\
    &= (-1)^{k+1} \frac{1}{\sqrt{2\lambda_k}} \log\left( \frac{C e^{\lambda_k s}}{C e^{\lambda_k s} - 1} \right) + O\left( \int_s^{\infty} c_k(\tau) \, d\tau \right).
\end{aligned}
\end{equation}
Substituting \eqref{meltinglaw} into \eqref{thm1.1 : eq2}, and using the estimate \eqref{integration of c_k}, we deduce the asymptotic behavior of $\lambda(s)$ in the melting regime:
\begin{equation} \label{lambdalawformelt}
\lambda(s) = \lambda_\infty \left( 1 - \left| \frac{c}{\sqrt{\lambda_k/2}} \right| e^{-\lambda_k s} \right) e^{O(c_k(s))}.
\end{equation}

\textbf{2.} \emph{Freezing regime}:
For $k \in 2\mathbb{N} - 1$, we choose $b_k(0) < 0$, and for $k \in 2\mathbb{N}$, we choose $b_k(0) > 0$. From the asymptotic law for $b_k$ in \eqref{thm1.1 : eq1}, we obtain
\[
b_k(s) = (-1)^k \frac{\sqrt{\lambda_k/2}}{1 + C e^{\lambda_k s + O(c_k(s))}},
\]
where $C \coloneqq \left|\frac{\sqrt{\lambda_k/2}}{c}\right| \gg 1$. By integrating $b_k$ from $s$ to $\infty$ and using the estimate \eqref{integration of c_k}, we find
\begin{equation}\label{freezinglaw}
\int_s^{\infty} b_k(\tau) \, d\tau = (-1)^{k+1} \frac{1}{\sqrt{2\lambda_k}} \log\left( \frac{C e^{\lambda_k s}}{1 + C e^{\lambda_k s}} \right) + O(c_k(s)).
\end{equation}

Substituting \eqref{freezinglaw} into \eqref{thm1.1 : eq2} and using the estimate \eqref{integration of c_k}, we deduce the asymptotic behavior of $\lambda(s)$ in the freezing regime:
\begin{equation}\label{lambdalawforfreez}
\lambda(s) = \lambda_\infty \left(1 + \left|\frac{c}{\sqrt{\lambda_k/2}}\right| e^{-\lambda_k s} \right) e^{O(c_k(s))}.
\end{equation}

We now change the time variable from $s$ to $t$. From the asymptotic expansions \eqref{lambdalawformelt} and \eqref{lambdalawforfreez}, together with the estimate $|c_k(s)| = o(e^{-\lambda_k s})$, we obtain
\[
\lambda^2(s) = \lambda_{\infty}^2 \left(1 + 2(-1)^{\mu} C e^{-\lambda_k s} + O(c_k(s)) \right),
\]
where $\mu = 1$ in the melting regime and $\mu = 0$ in the freezing regime. Since the relation between the renormalized time $s$ and physical time $t$ is given by $ds/dt = 1/\lambda^2(s)$, we integrate to find
\[
t = \int_0^t d\tau = \int_0^s \lambda^2(\tau) \, d\tau = \lambda_{\infty}^2 s + \tilde{X}(u_0) + \Theta(e^{-\lambda_k s}),
\]
for some constant $\tilde{X}(u_0)$ depending on the choice of initial data for \eqref{stefanradial}.

As a result, we derive the asymptotic behavior of $\lambda(t)$ in physical time:
\[
\lambda(t) = \lambda_{\infty} + B_k(u_0) e^{-\frac{\lambda_k}{\lambda_{\infty}^2} t + f_k(t)},
\]
for some function $f_k : [0, \infty) \to \mathbb{R}$ satisfying $f_k(0) = 0$ and $f_k(t) = o_{t \to \infty}(1)$, and some constant $B_k(u_0) \in \mathbb{R}$ depending on the initial data. In particular, $B_k(u_0) > 0$ in the freezing regime and $B_k(u_0) < 0$ in the melting regime.
Finally, using the normalization $\lambda(0) = 1$, the asymptotic law for $\lambda(t)$ becomes
\[
\lambda(t) = \lambda_{\infty} + (1 - \lambda_{\infty}) e^{-\frac{\lambda_k}{\lambda_{\infty}^2} t + o_{t \to \infty}(1)}.
\]

We now determine $\lambda_{\infty}$ in terms of the initial data $u_0$ and hence prove \eqref{terminal value}. Since $u$ solves the Stefan problem \eqref{stefanradial}, applying a divergence theorem yields the following conservation law:
\begin{equation} \label{conservationlaw}
\frac{d}{dt} \left( \int_{|x| \leq \lambda(t)} u(t,x) \, dx + \pi \lambda^2(t) \right) = 0.
\end{equation}
We claim that
\[
\lim_{t \to \infty} \| u(t) \|_{L^1(\{ |x| \leq \lambda(t) \})} = 0.
\]
In fact, by renormalization $u(t,x) = v(s, y)$ with $y = x/\lambda(t)$ and $s = \int_0^t \lambda^2(\tau) \, d\tau$, we compute
\[
\| u(t) \|_{L^1(\{ |x| \leq \lambda(t) \})} = \lambda(t)^2 \| v(s) \|_{L^1(B_1(0))} \lesssim \| v(s) \|_{L^2_b} \lesssim |b_k(s)|.
\]
Since $\lim_{s \to \infty} b_k(s) = 0$ and $\lambda(t) \to \lambda_{\infty} > 0$ as $t \to \infty$, the claim follows.

Using this in \eqref{conservationlaw}, and the normalization $\lambda(0) = 1$, we conclude
\begin{equation*}
\lambda_{\infty} = \sqrt{1 + \frac{1}{\pi} \int_{\Omega_0} u_0(x) \, dx}.
\end{equation*}
We choose a small constant $\delta_k > 0$ such that $\delta_k \lesssim b^*(k)$, and take the initial data $u_0$ for \eqref{stefanradial} that satisfy
\[
\| u_0 \|_{L^1(B_1(0))} \leq \delta_k.
\]
This ensures that the assumptions of Proposition~\ref{bootstrap assumptions} are satisfied, allowing full analysis to take place.

Moreover, by the construction of the solution for $k = 1$ and the topological argument used to initialize $(b_j(0))_{1 \leq j \leq k-1}$ for $k > 1$, we observe that the resulting solution is stable in codimension $k - 1$ for each $k \in \mathbb{N}$.
\end{proof}

\appendix

\section{Near invertibility of $\mathcal{H}_b - \lambda_k$, Proof of Lemma~\ref{lemma:near invertibility}}\label{appendix}

\begin{proof}[Proof of Lemma~\ref{lemma:near invertibility}]
We fix $K \in \mathbb{N}$ and choose $b^*(K)>0$ sufficiently small, and assume that $|b| < b^*(K)$. We may further reduce $b^*(K)$, if necessary, and will indicate this explicitly each time, while continuing to use the same notation. We proceed with our proof by employing a Lax–Milgram type argument.

Let $\mathcal{C} \subset H_{b}^{1}$ denote the orthogonal complement of $\{\eta_{j}\}_{j=1}^{k}$ with respect to the $\langle \cdot, \cdot \rangle_b$-inner product:
\[
\mathcal{C} \coloneqq \left\{ w \in H_{b}^{1} \;\middle|\; \langle w, \eta_j \rangle_b = 0 \text{ for } 1 \leq j \leq k \right\}.
\]
Define a Dirichlet energy functional $\mathcal{F} : \mathcal{C} \to \mathbb{R}$ associated with the operator $\mathcal{H}_b - \lambda_k-f$ by
\[
\mathcal{F}(w) \coloneqq \frac{1}{2} \int_0^1 |\partial_y w|^2 \rho_b y\,dy - \frac{\lambda_k}{2} \int_0^1 w^2 \rho_b y\,dy - \langle f, w \rangle_b, \quad w \in \mathcal{C}.
\]
We first claim that
\begin{equation}\label{eq:finiteness of I_b}
I_b \coloneqq \inf_{w \in \mathcal{C}} \mathcal{F}(w) > -\infty.
\end{equation}
To establish \eqref{eq:finiteness of I_b}, we claim that for all $w \in \mathcal{C}$, the following spectral gap estimate holds:
\begin{equation}\label{eq:7}
    \| \partial_y w \|_{L^2_b}^2 \geq (\lambda_{k+1} + O(|b|)) \| w \|_{L^2_b}^2.
\end{equation}
To prove \eqref{eq:7}, we first observe that since $w \in H^1_b$ and $w(1) = 0$ in the trace sense, we have $w \in H_0^1(\mathbb{R}) \subset L^2_0$. Therefore, $w$ admits the Fourier--Bessel expansion
\begin{equation*}
w = \sum_{j=1}^{\infty} \langle w, \eta_j \rangle_0 \, \eta_j.
\end{equation*}

Possibly after reducing $b^*(K) > 0$ (while keeping the same notation), we may assume that $|1 - \rho_b(y)| \leq |b|$ for all $|y| \leq 1$ and $|b| < b^*(K)$. Then, we have
\begin{equation}\label{aa:2}
\langle w,\eta_{j}\rangle_{0} = \int_{0}^{1} w \eta_{j} \rho_{b} y\,dy + \int_{0}^{1} w \eta_{j} (1 - \rho_{b}) y\,dy 
\leq |\langle w, \eta_{j} \rangle_{b}| + |b| \|w\|_{L^{2}_0},
\end{equation}
where we used the fact that $\|\eta_j\|_{L^2_0} = 1$. 
Since $\langle w, \eta_j \rangle_b = 0$ for $1 \leq j \leq k$, it follows from \eqref{aa:2} that $|\langle w, \eta_j \rangle_0| \leq |b| \|w\|_{L^2_0}$ for $1 \leq j \leq k$. 
Combining this with Parseval's identity, we obtain
\begin{equation}\label{eq:5}
\|w\|_{L^2_0}^2 = \sum_{j=1}^{\infty} |\langle w, \eta_j \rangle_0|^2 
\leq k b^2 \|w\|_{L^2_0}^2 + \sum_{j = k+1}^{\infty} |\langle w, \eta_j \rangle_0|^2.
\end{equation}
Noting that $\eta_j$ is the $L^2_0$-normalized eigenfunction of the unperturbed problem \eqref{eq:1}, and combining \eqref{eq:5}, we obtain the following estimate:
\begin{equation*}
    \begin{aligned}
        \norm{\partial_{y}w}_{L^{2}_0}^{2} & =\sum_{j=1}^{\infty}|\langle w,\eta_{j}\rangle_{0}|^{2}\lambda_{j}\\
        & \geq\lambda_{k+1}\sum_{j=k+1}^{\infty}|\langle w,\eta_{j}\rangle_{0}|^{2}-\lambda_{k}\sum_{j=1}^{k}|\langle w,\eta_{j}\rangle_{0}|^{2}\\
        & \geq\{\lambda_{k+1}(1-kb^{2})-\lambda_{k}kb^{2}\}\norm{w}_{L^{2}}^{2}\\
        & =\{\lambda_{k+1}-(\lambda_{k+1}+\lambda_{k})kb^{2}\}\norm{w}_{L^{2}}^{2}.
    \end{aligned}
\end{equation*}
Using the relations $\norm{\partial_y w}_{L^2_b} = (1 + O(b))\norm{\partial_y w}_{L^2_0}$ and $\norm{w}_{L^2_b} = (1 + O(b))\norm{w}_{L^2_0}$, we conclude the desired spectral gap estimate \eqref{eq:7}.
Possibly reducing $b^*(K) > 0$, we obtain
\begin{equation*}
    \mathcal{F}(w) \geq \frac{1}{2}(\lambda_{k+1}-\lambda_k - \frac{1}{2}+O(|b|))\norm{w}_{L^2_b}^2 - \frac{1}{2}\norm{f}_{L^2_b}^2 \geq -\frac{1}{2}\norm{f}_{L^2_b}^2 > -\infty.
\end{equation*}
This proves \eqref{eq:finiteness of I_b}.
We now establish the existence of a function $u \in \mathcal{C}$ such that $\mathcal{F}(u) = \inf_{w \in \mathcal{C}} \mathcal{F}(w)$. This will be achieved by solving the minimization problem for $\mathcal{F}$ subject to the constraint $w \in \mathcal{C}$. 

Let $\{w_n\}_{n \in \mathbb{N}} \subset \mathcal{C}$ be a minimizing sequence for $\mathcal{F}$, i.e., 
\[
\lim_{n \to \infty} \mathcal{F}(w_n) = \inf_{w \in \mathcal{C}} \mathcal{F}(w).
\]
Then, from \eqref{eq:7} we see that $\norm{w_n}_{H^1_b}$ is uniformly bounded. We then extract a subsequence (still denoted by $\{w_n\}$) that converges weakly in $H_b^1$ to some function $u$. Note that a weak convergence gives $\langle u,\eta_j\rangle_b=0$ for $1\leq j\leq k$. Furthermore, $\mathcal{F}$ is lower semi-continuous in $\mathcal{C}$ due to $f\in L^{\infty}(B_{1}(0))$. Hence, $u \in \mathcal{C}$ and $I_b = \mathcal{F}(u)$. To prove the uniqueness of the minimizer of $\mathcal{F}$ in $\mathcal{C}$, we use the convexity of $\mathcal{F}$. Indeed, for $u_1, u_2 \in \mathcal{C}$, we have
\begin{align*}
\mathcal{F}(\tau u_1+(1-\tau)u_2)<\tau\mathcal{F}(u_1)+(1-\tau)\mathcal{F}(u_2),\quad \text{for }\tau\in(0,1).
\end{align*}

\begin{align*}
 \mathcal{F}&(\tau u_1+(1-\tau)u_2)\\
 & = \int_{0}^{1}\left\{ \frac{1}{2}|\tau\partial_{y}u_1+(1-\tau)\partial_{y}u_2|^{2}-\frac{\lambda_{k}}{2}|\tau u_1+(1-\tau)u_2|^{2}-(\tau u_1+(1-\tau)u_2)f(y)\right\} \rho_{b}y\ dy\\
 & = \frac{\tau^{2}}{2}\int_{0}^{1}|\partial_{y}u_1|^{2}\rho_{b}y\ dy+\frac{(1-\tau)^{2}}{2}\int_{0}^{1}|\partial_{y}u_2|^{2}\rho_{b}y\ dy+\tau(1-\tau)\int_{0}^{1}\partial_{y}u_1\partial_{y}u_2\rho_{b}y\ dy\\
 & \quad-\frac{\lambda_{k}}{2}\tau^{2}\int_{0}^{1}u_1^{2}\rho_{b}y\ dy-\frac{\lambda_{k}}{2}(1-\tau)^{2}\int_{0}^{1}u_2^{2}\rho_{b}y\ dy-\lambda_{k}\tau(1-\tau)\int_{0}^{1}u_1u_2\rho_{b}y\ dy\\
 & \quad -\int_{0}^{1}\ (\tau u_1+(1-\tau)u_2)f(y)\rho_{b}y\ dy\\
 &= \tau\mathcal{F}(u_1)+(1-\tau)\mathcal{F}(u_2)-\tau(1-\tau)\left\{ X+Y-\int_{0}^{1}\left(\partial_{y}u_1\partial_{y}u_2-\lambda_{k}u_1u_2\right)\rho_{b}y\ dy\right\} ,
\end{align*}
where we denoted by 
\begin{align*}
 & X \coloneqq \frac{1}{2}\int_{0}^{1}|\partial_{y}u_1|^{2}\rho_{b}y\ dy-\frac{\lambda_{k}}{2}\int_{0}^{1}u_1^{2}\rho_{b}y\ dy,\\
 & Y \coloneqq \frac{1}{2}\int_{0}^{1}|\partial_{y}u_2|^{2}\rho_{b}y\ dy-\frac{\lambda_{k}}{2}\int_{0}^{1}u_2^{2}\rho_{b}y\ dy.
\end{align*}
Since $u_1 - u_2 \in \mathcal{C}$ and the spectral gap estimate \eqref{eq:7} hold for all elements in $\mathcal{C}$, it follows that
\begin{align*}
\mathcal{F}(\tau u_1+(1-\tau)u_2)= & \tau\mathcal{F}(u_1)+(1-\tau)\mathcal{F}(_2)-\tau(1-\tau)\{\norm{\partial_{y}u_1-\partial_{y}u_2}_{L_{b}^{2}}^{2}-\lambda_{k}\norm{u_1-u_2}_{L_{b}^{2}}^{2}\}\\
< & \tau\mathcal{F}(u_1)+(1-\tau)\mathcal{F}(u_2).
\end{align*}
Therefore, we conclude that there exists a unique weak solution $u \in \mathcal{C}$ such that $(\mathcal{H}_b - \lambda_k)u - f = 0$ in the orthogonal complement of $\text{span}\{\eta_1, \cdots, \eta_k\}$ with respect to the inner product $\langle \cdot, \cdot \rangle_b$. By the Lagrange multiplier Theorem, there exist real numbers $\mu_j \in \mathbb{R}$ for $j = 1, \cdots, k$ such that
\begin{equation*}
(\mathcal{H}_b - \lambda_k)u - f = \sum_{j=1}^{k} \mu_j \eta_j.
\end{equation*}
Using the orthogonality conditions $\langle f, \eta_j \rangle_b = 0$ for $1 \leq j \leq k$, we obtain
\begin{equation}
\langle (\mathcal{H}_b - \lambda_k)u, \eta_j \rangle_b = \sum_{i=1}^{k} \mu_i \langle \eta_i, \eta_j \rangle_b. \label{eq:8}
\end{equation}
Since $u \in \mathcal{C}$, we have $\langle u, \eta_j \rangle_b = 0$ for $1 \leq j \leq k$ and $u(1) = 0$. 
Integrating by parts, we compute
\begin{equation}\label{eq:9}
\begin{aligned}
\langle \mathcal{H}_b u, \eta_j \rangle_b 
&= \left[ -\rho_b y \partial_y u \, \eta_j \right]_{0}^{1} + \int_{0}^{1} \partial_y u \, \partial_y \eta_j \, \rho_b y \, dy\\
&= \left[ u \, \partial_y \eta_j \, \rho_b y \right]_{0}^{1} + \int_0^1 (\mathcal{H}_b \eta_j) u \, \rho_b y \, dy  \\
&= \langle u, \mathcal{H}_b \eta_j \rangle_b = b \langle u, \Lambda \eta_j \rangle_b.
\end{aligned}
\end{equation}
Combining \eqref{eq:8} and \eqref{eq:9}, we determine the Lagrange multipliers $\mu_j$ as
\begin{equation*}
(\mu_j)_{1 \leq j \leq k} = b M_{b,k}^{-1} \left( \langle u, \Lambda \eta_i \rangle_b \right)_{1 \leq i \leq k},
\end{equation*}
which shows that $u$ is the unique solution to \eqref{eq:3}.

It remains to prove the estimate \eqref{estimatesofinvertibility}.
Taking the $\langle \cdot, \cdot \rangle_b$-inner product of both sides of equation \eqref{eq:3} with the solution $u$, and applying the spectral estimate \eqref{eq:7}, we obtain
\begin{align*}
\norm{f}_{L_b^2} \norm{u}_{L_b^2} 
& \geq \langle f, u \rangle_b \\
& = \langle (\mathcal{H}_b - \lambda_k) u, u \rangle_b \\
& = \norm{\partial_y u}_{L_b^2}^2 - \lambda_k \norm{u}_{L_b^2}^2 \\
& \gtrsim \norm{u}_{L_b^2}^2,
\end{align*}
which implies the desired estimate. 
\end{proof}

\providecommand{\bysame}{\leavevmode\hbox to3em{\hrulefill}\thinspace}
\providecommand{\MR}{\relax\ifhmode\unskip\space\fi MR } 
\providecommand{\MRhref}[2]{%
  \href{http://www.ams.org/mathscinet-getitem?mr=#1}{#2}
} 

\bibliographystyle{amsplain}
\bibliography{Stefan}

\end{document}